\DeclareSymbolFont{extraup}{U}{zavm}{m}{n}
\DeclareMathSymbol{\vardiamond}{\mathalpha}{extraup}{87}
\newcommand{\commment}[1]{}
\newcommand{\nomi}{\mathbf{i}}
\newcommand{\nomj}{\mathbf{j}}
\newcommand{\bigamp}{\mathop{\mbox{\Large \&}}}
\renewcommand{\phi}{\varphi}
\renewcommand{\emptyset}{\varnothing}
\newcommand{\ca}{\mathbb{A}^\delta}
\newcommand{\A}{\mathbb{A}}
\newcommand{\B}{\mathbb{B}}
\newcommand{\bbas}{\mathbb{A}^{\delta}}
\newcommand{\Boxblack}{\blacksquare\,}
\newcommand{\Diamondblack}{\vardiamond}
\renewcommand{\epsilon}{\varepsilon}
\newtheorem{theorem}{Theorem}[section]
\newtheorem{lemma}[theorem]{Lemma}
\newtheorem{prop}[theorem]{Proposition}
\newtheorem{cor}[theorem]{Corollary}
\newtheorem{example}[theorem]{Example}
\theoremstyle{definition}
\newtheorem{definition}[theorem]{Definition}
\newtheorem{remark}[theorem]{Remark}
\title{Algorithmic Correspondence and Canonicity for Possibility Semantics}
\author{Zhiguang Zhao}
\date{}
\begin{document}
\maketitle
\begin{abstract}
The present paper develops a unified correspondence treatment of the Sahlqvist theory for possibility semantics, extending the results in \cite{Ya16} from Sahlqvist formulas to the strictly larger class of inductive formulas, and from the full possibility frames to filter-descriptive possibility frames.
Specifically, we define the possibility semantics version of the algorithm ALBA, and an adapted interpretation of the expanded modal language used in the algorithm. We prove the soundness of the algorithm with respect to both (the dual algebras of) full possibility frames and (the dual algebras of) filter-descriptive possibility frames. We make some comparisons among different semantic settings in the design of the algorithms, and fit possibility semantics into this broader picture.

One notable feature of the adaptation of ALBA to possibility frames setting is that the so-called nominal variables, which are interpreted as complete join-irreducibles in the standard setting, are interpreted as regular open closures of ``singletons'' in the present setting.

\end{abstract}

\section{Introduction}

\paragraph{Possibility semantics.} Possibility semantics was proposed by Humberstone \cite{Hu81} as an alternative semantics for modal logic, which is based on possibilities rather than possible worlds in Kripke semantics, where every possibility does not provide truth values of all propositions, but only some of them. Different possibilities are ordered by a refinement relation where some possibilities provide more information about the truth value of propositions than others. 

In recent years, possibility semantics has been intensely investigated: \cite{Ho14} gives a construction of canonical possibility models with a finitary flavor, \cite{BeHo16} studies the intuitionistic generalization of possibility models, \cite{HT16a} investigates the first-order counterpart of possibility semantics, \cite{HT16b} focuses on the relation between Kripke models and possibility models, \cite{vBBeHo16} provides a bimodal perspective on possibility semantics. A comprehensive study of possibility semantics can be found in \cite{Ho16}. In \cite{Ya16}, the first author of the present paper investigates the correspondence theory for possibility semantics and proves a correspondence theorem for Sahlqvist formulas over full possibility frames---which are the counterparts of (full) Kripke frames in the possibility semantics setting---using insights from algebraic correspondence theory developed in \cite{ConPalSou}. In \cite[Theorem 7.20]{Ho16}, Holliday shows that all {\em inductive} formulas are filter-canonical (i.e. their validity is preserved from the canonical possibility models to their underlying canonical full possibility frames), hence every normal modal logic axiomatized by inductive formulas is sound and complete with respect to its canonical full possibility frame. Inductive formulas \cite{GorankoV06} form a syntactically defined class of formulas which properly extends Sahlqvist formulas, while enjoying their same property of canonicity and correspondence \cite{CoGoVa06}. Holliday's result provides the canonicity half of the Sahlqvist-type result for inductive formulas relative to possibility semantics. The present paper provides the remaining half. Namely, we show that inductive formulas have first-order correspondents in full possibility frames as well as in filter-descriptive possibility frames\footnote{As remarked in \cite[page 103]{Ho16}, correspondence results might be lost when moving from full possibility frames to filter-descriptive possibility frames.}. 

\paragraph{Unified correspondence.} 

The methodology of the present paper pertains to a wider theory, referred to as \emph{unified correspondence}. As explained in \cite{ConPalSou,CoGhPa14}, this theory is built on the duality between the algebraic and the relational semantics of non-classical logics, thanks to which it has explained the ``Sahlqvist phenomenon'' in terms of the algebraic and order-theoretic properties of the interpretations of the logical connectives. The focus on these properties has been key to being able to generalize the Sahlqvist-type results from modal logic to a wide array of logics, which include, among others, intuitionistic and distributive and general (non-distributive) lattice-based (modal) logics \cite{CoPa12,CoPa:non-dist,CFPPTW}, non-normal (regular) modal logics based on distributive lattices of arbitrary modal signature \cite{PaSoZh16}, monotone modal logic \cite{FrPaSa16}, hybrid logics \cite{ConRob}, many valued logics \cite{LeRoux:MThesis:2016} and bi-intuitionistic and lattice-based modal mu-calculus \cite{CoCr14,CCPZ,CFPS15}. The main tools of this theory are a language-independent definition of {\em inductive formulas/inequalities} based on the order-theoretic properties of the interpretations of the logical connectives, and the algorithm ALBA, which computes the first-order correspondent of input formulas/inequalities and is guaranteed to succeed on the inductive class. Subsequent work has shown that the algorithm ALBA is much more versatile as a tool than initially expected. Indeed, a suitable adaptation of ALBA has been used to compute the shape of the analytic rules equivalent to given axioms so as to contribute to the quest of uniform methods to design analytic calculi for non-classical logics \cite{GMPTZ,MZ16}. Other suitable modifications of the original algorithm have been used to prove canonicity results in settings progressively further away from correspondence: J\'onsson-style canonicity \cite{PaSoZh15}, constructive canonicity \cite{CP:constructive}, and canonicity via pseudo-correspondence \cite{CPSZ}. In particular, in \cite{CP:constructive}, ALBA has been used to prove {\em constructive canonicity}, i.e.\ algebraic canonicity in a context in which relational semantics is not defined in general. Interestingly, in the same paper, the proof technique used to prove the main result has also been systematically related to the methodology used to prove {\em canonicity-via-correspondence}. The contributions of the present paper can be understood as providing yet another dimension to this story: indeed, the possibility semantics can be understood as the relational incarnation of the general algebraic environment in which constructive canonicity is proved. 

\paragraph{Methodology.} We analyze the correspondence phenomenon in possibility semantics using the dual algebraic structures of (full) possibility frames, namely complete (not necessarily atomic) Boolean algebras with complete operator, where atoms are not always available. For correspondence over full possibility frames, we identify two different Boolean algebras with operator as the dual algebraic structures of a given full possibility frame by viewing the full possibility frame in two different ways, namely the Boolean algebra of regular open subsets $\mathbb{B}_{\mathsf{RO}}$ (when viewing the possibility frame as a possibility frame itself) and the Boolean algebra of arbitrary subsets $\mathbb{B}_{\mathsf{K}}$ (when viewing the possibility frame as a bimodal Kripke frame, see \cite{vBBeHo16}), where a canonical order-embedding map $e:\mathbb{B}_{\mathsf{RO}}\to\mathbb{B}_{\mathsf{K}}$ can be defined. The embedding $e$ preserves arbitrary meets, therefore a left adjoint $c:\mathbb{B}_{\mathsf{K}}\to\mathbb{B}_{\mathsf{RO}}$ of $e$ can be defined, which sends a subset $X$ of the domain $W$ of possibilities to the smallest regular open subset containing $X$. This left adjoint $c$ plays an important role in the dual characterization of the interpretations of nominals and the black connectives, which form the ground of the regular open translation of the expanded modal language. In particular, we give an algebraic counterpart of Lemma 3.7 in \cite{Ya16} that every regular open element can be represented as the join of regular open closures of singletons below it, therefore the regular open closures of singletons form the join-generators. When it comes to canonicity, we prove a topological Ackermann lemma similar to \cite[Lemma 9.3 and 9.4]{CoPa12}, which forms the basis of the correspondence result with respect to filter-descriptive frames as well as the canonicity result.

\paragraph{Structure of the paper.} Section \ref{aSec:Prelim} presents preliminaries on possibility semantics, both frame-theoretically and algebraically, as well as the duality theory background of possibility semantics. Section \ref{aSec:algebraic:analysis} gives an algebraic analysis of the semantic environment of possibility semantics for the interpretation of the expanded modal language, the details of which will be given in Section \ref{aSec:expanded:language} together with the regular open translation and the syntactic definition of Sahlqvist and inductive formulas. The Ackermann Lemma Based Algorithm (ALBA) for possibility semantics is given in Section \ref{aSec:ALBA} as well as some examples, with its soundness proof with respect to full possibility frames in Section \ref{aSec:soundness} and the proof that it succeeds on inductive formulas in Section \ref{aSec:success}. The soundness proof with respect to filter-descriptive possibility frames and the canonicity-via-correspondence proof are given in Section \ref{aSec:Canonicity}. Section \ref{aSec:Discussion} provides some discussions, and gives some further directions.

\section{Preliminaries on possibility semantics}\label{aSec:Prelim}

In the present section we collect the preliminaries on possibility semantics. For more details, see e.g.\ \cite[Section 1 and 2]{Ho16} and \cite{Ya16}. 

\subsection{Language}

Given a set $\mathsf{Prop}$ of propositional variables, the basic modal language $\mathcal{L}$ is defined as follows:
$$\varphi::=p \mid \neg\varphi \mid \varphi\land\varphi \mid \Box\varphi,$$
where $p\in \mathsf{Prop}$. We define $\varphi\lor\psi:=\neg(\neg\varphi\land\neg\psi)$, $\varphi\to\psi:=\neg\varphi\lor\psi$, $\bot:=p\land\neg p$, $\top:=\neg\bot$ and $\Diamond\varphi:=\neg\Box\neg\varphi$, respectively. We also use $\mathsf{Prop}(\alpha)$ to denote the propositional variables occuring in $\alpha$. \footnote{In the present paper we will consider only the modal language with only one unary modality. These results can be easily generalized to languages with arbitrary signature, which is ongoing work.}

We will find it convenient to work on \emph{inequalities}, i.e.\ expressions of the form $\phi\leq\psi$, the interpretation of which is equivalent to the implicative formula $\phi\to\psi$ being true at any point in a model. Throughout the paper, we will also make substantial use of \emph{quasi-inequalities}, i.e.\ expressions of the form $\phi_1\leq\psi_1\ \&\ \ldots\ \&\ \phi_n\leq\psi_n\Rightarrow\phi\leq\psi$, where $\&$ is the meta-level conjunction and $\Rightarrow$ is the meta-level implication.

\subsection{Downset topology}
In the present subsection, we report on the definition of possibility frames and possibility models. We will make use of the following auxiliary notions. For every partial order $(W, \sqsubseteq)$, a subset $Y\subseteq W$ is {\em downward closed} (or a {\em down-set}) if for all $x, y\in W$, if $x\in Y$ and $y\sqsubseteq x$, then $y\in Y$.
For every $X\subseteq W$, the set ${\Downarrow}X:=\{x\in W\mid(\exists y\sqsupseteq x)(y\in X)\}$ is the smallest down-set containing $X$. The set of all down-sets of $(W, \sqsubseteq)$ forms a topology on $W$, denoted by $\tau_{\sqsubseteq}$, which we call \emph{the downset topology}.

For any $X\subseteq W$, we let $\mathsf{cl}(X):=\{x\in W\mid (\exists y\sqsubseteq x)(y\in X)\}$ (resp.\ $\mathsf{int}(X):=\{x\in W\mid (\forall y\sqsubseteq x)(y\in X)\}$) denote the \emph{closure} (resp.\ \emph{interior}) of $X$. We also let $$\mathsf{RO}(W,\tau_{\sqsubseteq}):=\{X\subseteq W\mid \mathsf{int}(\mathsf{cl}(X))=X\}$$ denote the collection of \emph{regular open} subsets of $W$. We say a set $Y\subseteq W$ the \emph{regular open closure} of $X$ if $Y$ is the least regular open subset of $W$ containing $X$, and denote $Y=\mathsf{ro}(X)$.

We collect some useful facts about the downset topology:

\begin{prop}(cf.\ \cite[page 16-18]{Ho16})\label{afacts:regular:open}
For every partial order $(W, \sqsubseteq)$,
\begin{enumerate}
\item every regular open subset of $(W, \tau_{\sqsubseteq})$ is a down-set, and hence a $\tau_{\sqsubseteq}$-open subset.
\item $\mathsf{ro}(X)=\mathsf{int}(\mathsf{cl}(\Downarrow X))$ for any subset $X\subseteq W$.
\item $\mathsf{ro}(X)=\mathsf{int}(\mathsf{cl}(X))$ for any $X\in\tau_{\sqsubseteq}$.
\item $\emptyset, W\in\mathsf{RO}(W,\tau_{\sqsubseteq})$.
\item $X\cap Y\in\mathsf{RO}(W,\tau_{\sqsubseteq})$ if $X, Y\in\mathsf{RO}(W,\tau_{\sqsubseteq})$.
\item $\mathsf{int}(\mathsf{cl}(X\cup Y))\in\mathsf{RO}(W,\tau_{\sqsubseteq})$ if $X, Y\in\mathsf{RO}(W,\tau_{\sqsubseteq})$.
\item $\mathsf{int}(W\setminus X)\in\mathsf{RO}(W,\tau_{\sqsubseteq})$ if $X\in\mathsf{RO}(W,\tau_{\sqsubseteq})$.
\item $\mathsf{RO}(W,\tau_{\sqsubseteq})$ is closed under arbitrary intersection (cf.\ \cite[footnote 13 on page 17]{Ho16}).
\item $(\mathsf{RO}(W,\tau_{\sqsubseteq}), \emptyset,W,\land,\lor,-)$ is a Boolean algebra such that for all $X, Y\in(\mathsf{RO}(W,\tau_{\sqsubseteq})$, 
$$X\land Y=X\cap Y\qquad X\lor Y=\mathsf{int}(\mathsf{cl}(X\cup Y))\qquad-X=\mathsf{int}(W\setminus X).$$
\item for all $X, Y\in(\mathsf{RO}(W,\tau_{\sqsubseteq})$, 

$$X\supset Y:=-X\lor Y=\mathsf{int}((W\setminus X)\cup Y)$$
\end{enumerate}
\end{prop}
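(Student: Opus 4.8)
The plan is to prove each of the ten clauses of Proposition~\ref{afacts:regular:open} by working directly from the definitions of interior, closure, and regular openness in the downset topology, leaning on the standard fact from topology that in any topological space the map $X \mapsto \mathsf{int}(\mathsf{cl}(X))$ is a closure-like operator on open sets whose fixed points (the regular open sets) form a complete Boolean algebra under the operations $\land = \cap$, $X \lor Y = \mathsf{int}(\mathsf{cl}(X \cup Y))$, and $-X = \mathsf{int}(W \setminus X)$. Since $\tau_\sqsubseteq$ is the specialization topology of the order (its opens are exactly the down-sets), I would first record the two pointwise descriptions $\mathsf{cl}(X) = \{x \mid (\exists y \sqsubseteq x)\, y \in X\}$ and $\mathsf{int}(X) = \{x \mid (\forall y \sqsubseteq x)\, y \in X\}$ already given in the text, and note that these are De Morgan dual: $\mathsf{int}(W \setminus X) = W \setminus \mathsf{cl}(X)$.

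For the individual clauses I would proceed as follows. Clause~(1) follows because $\mathsf{int}(Z)$ is always a down-set (if $x \in \mathsf{int}(\mathsf{cl}(X))$ and $y \sqsubseteq x$, then every $z \sqsubseteq y$ also satisfies $z \sqsubseteq x$, so $z \in \mathsf{cl}(X)$), and any regular open set equals such an interior. Clauses~(2) and~(3) are reconciliations of the abstract least-regular-open-superset definition of $\mathsf{ro}$ with its concrete formula: for~(3), when $X$ is already open one checks $\mathsf{int}(\mathsf{cl}(X))$ is regular open (using the topological identity $\mathsf{cl}\,\mathsf{int}\,\mathsf{cl} = \mathsf{cl}$ on opens, equivalently $\mathsf{int}\,\mathsf{cl}\,\mathsf{int}\,\mathsf{cl} = \mathsf{int}\,\mathsf{cl}$), contains $X$, and is contained in any regular open superset by monotonicity of $\mathsf{int}\,\mathsf{cl}$; clause~(2) then reduces the general case to the open case via ${\Downarrow}X$, since $\mathsf{cl}(X) = \mathsf{cl}({\Downarrow}X)$ and ${\Downarrow}X$ is the smallest open set containing $X$. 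Clause~(4) is immediate since $\emptyset$ and $W$ are clopen. Clauses~(5)--(7) are the closure properties: for~(5) I would verify $\mathsf{int}\,\mathsf{cl}(X \cap Y) = X \cap Y$ when $X,Y$ are regular open, using $\mathsf{cl}(X \cap Y) \subseteq \mathsf{cl}(X) \cap \mathsf{cl}(Y)$ together with regularity; (6) is immediate from~(3) applied to the open set $X \cup Y$; and~(7) follows from the dual identity $\mathsf{int}(W \setminus X) = W \setminus \mathsf{cl}(X)$, checking that the complement of a closed-of-regular-open set is again a fixed point of $\mathsf{int}\,\mathsf{cl}$.

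Clause~(8), closure under arbitrary intersection, I would prove by taking a family $\{X_i\}$ of regular open sets with intersection $X$, observing $X \subseteq \mathsf{int}(\mathsf{cl}(X))$ always holds for open $X$, and for the reverse inclusion using $\mathsf{int}(\mathsf{cl}(X)) \subseteq \mathsf{int}(\mathsf{cl}(X_i)) = X_i$ for each $i$ by monotonicity, whence $\mathsf{int}(\mathsf{cl}(X)) \subseteq X$. Clauses~(9) and~(10) are then bookkeeping: once~(4)--(7) establish that $\mathsf{RO}$ is closed under the three stated operations, the Boolean-algebra axioms (associativity, commutativity, distributivity, the complementation laws $X \land (-X) = \emptyset$ and $X \lor (-X) = W$) are verified by reducing each to a computation with $\mathsf{int}$, $\mathsf{cl}$, and set operations, the only subtle law being distributivity of $\lor$ over $\land$, which I would derive from the adjunction $c \dashv e$ between arbitrary subsets and regular opens (the map $c = \mathsf{int}\,\mathsf{cl}\,\Downarrow$ preserving joins) rather than by brute force; clause~(10) is just the definition of $\supset$ unfolded via~(7). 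The main obstacle is clause~(5) together with the complementation law $X \lor (-X) = W$ in~(9): unlike in an ordinary Boolean algebra of subsets, joins here are not unions, so one must genuinely use the interaction $\mathsf{int}(\mathsf{cl}(X \cup \mathsf{int}(W \setminus X))) = W$, which rests on the density fact that $\mathsf{cl}(X \cup \mathsf{int}(W \setminus X))$ is all of $W$ for open $X$ in this topology. I would isolate that density lemma first and cite it repeatedly.
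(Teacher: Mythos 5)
The paper does not actually prove this proposition---it is stated as a collection of known facts with a pointer to \cite[pages 16--18]{Ho16}---so your sketch is supplying an argument where the paper defers entirely to the literature. Your overall route (work pointwise with $\mathsf{cl}$ and $\mathsf{int}$, establish that $\mathsf{int}(\mathsf{cl}(\cdot))$ on open sets has the regular opens as fixed points, and derive the algebraic clauses from that) is the standard one and most clauses are handled correctly: (1), (3), (4), (5), (6), (7) and the density argument for $X\lor(-X)=W$ are fine, and for (8) your monotonicity argument is right, though you should say explicitly why $\bigcap_i X_i$ is open---this is the one place where the special (Alexandrov) character of $\tau_{\sqsubseteq}$ is genuinely used, since arbitrary intersections of down-sets are down-sets, whereas in a general topological space the regular opens are \emph{not} closed under arbitrary intersections.

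Two steps as written are wrong or unsupported. First, in clause (2) you assert $\mathsf{cl}(X)=\mathsf{cl}(\Downarrow X)$; this is false. Take $W=\{a,b\}$ with $b\sqsubseteq a$ and $X=\{a\}$: then $\mathsf{cl}(X)=\{x\mid a\sqsubseteq x\}=\{a\}$ while $\Downarrow X=\{a,b\}$ and $\mathsf{cl}(\Downarrow X)=W$. (Indeed this is exactly why clause (3) must be restricted to open $X$ and why the $\Downarrow$ is needed in clause (2): here $\mathsf{int}(\mathsf{cl}(X))=\emptyset$ but $\mathsf{ro}(X)=W$.) The clause itself is still provable from the \emph{other} half of your sentence: since every regular open set is open by (1) and $\Downarrow X$ is the least open set containing $X$, a regular open set contains $X$ iff it contains $\Downarrow X$, so $\mathsf{ro}(X)=\mathsf{ro}(\Downarrow X)=\mathsf{int}(\mathsf{cl}(\Downarrow X))$ by (3); just delete the false identity. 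Second, your plan to obtain distributivity in clause (9) ``from the adjunction $c\dashv e$'' does not work as stated: left-adjointness of $c$ gives only that $c$ preserves unions, whereas distributivity of $\land$ over $\lor$ in $\mathsf{RO}(W,\tau_{\sqsubseteq})$ amounts to the Frobenius-type identity $X\cap\mathsf{int}(\mathsf{cl}(A))=\mathsf{int}(\mathsf{cl}(X\cap A))$ for $X$ regular open and $A$ open, which needs a separate (pointwise) verification. A cleaner packaging of (9) and (10) is to observe that in the Heyting algebra of open sets one has $\neg U=\mathsf{int}(W\setminus U)$ and hence $\neg\neg U=\mathsf{int}(\mathsf{cl}(U))$, so the regular opens are exactly the $\neg\neg$-fixed elements and Glivenko's theorem yields the Boolean algebra structure with precisely the stated operations; clause (10) then also requires the (standard but not purely definitional) identity $\mathsf{int}(\mathsf{cl}(\mathsf{int}(W\setminus X)\cup Y))=\mathsf{int}((W\setminus X)\cup Y)$ for regular open $X,Y$.
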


\subsection{Relational semantics}

For every binary relation $R$ on a set $W$, we denote $R[X]=\{w\in W\mid (\exists x\in X)Rxw\}$ and $R^{-1}[X]=\{w\in W\mid (\exists x\in X)Rwx\}$, and denote $R[w]:=R[\{w\}]$ and $R^{-1}[w]:=R^{-1}[\{w\}]$, respectively.
Below we give a slightly different but equivalent definition of possibility frames than the one given in \cite{Ho16}.

\begin{definition}[Possibility frames and models]\label{adef:poss:frame:model}

A \emph{possibility frame} is a tuple $\mathbb{F}=(W, \sqsubseteq, R, \mathsf{P})$, where $W\neq\emptyset$ is the \emph{domain} of $\mathbb{F}$, the \emph{refinement relation}\footnote{We adopt the order of the refinement relation as in \cite{Ho16,Ya16}, which is used in the theory of weak forcing \cite{Od89}, while in the literature of intuitionistic logic, the order is typically the reverse order.} $\sqsubseteq$ is a partial order on $W$, the \emph{accessibility relation} $R$ is a binary relation\footnote{In the literature, some interaction conditions are imposed between the accessibility relation and the refinement relation (cf.\ \cite{vBBeHo16}). Since these conditions are not needed for our treatment, we do not impose them and we do not discuss them any further in the present paper.} on $W$, and the collection $\mathsf{P}\subseteq\mathsf{RO}(W,\tau_{\sqsubseteq})$ of \emph{admissible subsets} forms a sub-Boolean algebra of $\mathsf{RO}(W,\tau_{\sqsubseteq})$ such that $\Box_{\mathsf{P}}(X)=\{w\in W\mid R[w]\subseteq X\}\in\mathsf{P}$ for any $X\in\mathsf{P}$. A \emph{pointed possibility frame} is a pair $(\mathbb{F}, w)$ where $w\in W$.
A \emph{possibility model} is a pair $\mathbb{M}=(\mathbb{F}, V)$ where $V:\mathsf{Prop}\to\mathsf{P}$ is a \emph{valuation} on $\mathbb{F}$.
A possibility frame is \emph{full} if $\mathsf{P}=\mathsf{RO}(W,\tau_{\sqsubseteq})$. 
\end{definition}

It follows straightforwardly from the conditions above that $\mathsf{P}$ is endowed with the algebraic structure of a BAO.\label{aP:vs:RO} In general, for any possibility frame $\mathbb{F}=(W, \sqsubseteq, R, \mathsf{P})$, the \emph{underlying full possibility frame} $\mathbb{F}^{\sharp}=(W, \sqsubseteq, R, \mathsf{RO}(W,\tau_{\sqsubseteq}))$ might not be well-defined, since $\mathsf{RO}(W,\tau_{\sqsubseteq})$ might not be closed under the box operation arising from the relation $R$. However, in certain situations which we will discuss in Remark \ref{aremark:underlying:full}, the underlying full possibility frame is well-defined.

Given any possibility model $\mathbb{M}=(W, \sqsubseteq, R, \mathsf{P}, V)$ and any $w\in W$, the \emph{satisfaction relation} is defined as follows:

\begin{itemize}
\item $\mathbb{F}, V, w\vDash p$ iff $w\in V(p)$;
\item $\mathbb{F}, V, w\vDash \varphi\land\psi$ iff $\mathbb{F}, V, w\vDash \varphi$ and $\mathbb{F}, V, w\vDash\psi$;
\item $\mathbb{F}, V, w\vDash \neg\varphi$ iff $(\forall v\sqsubseteq w)(\mathbb{F}, V, v\nvDash\varphi)$;
\item $\mathbb{F}, V, w\vDash \Box\varphi$ iff $\forall v(Rwv\ \Rightarrow\ \mathbb{F}, V, v\vDash\varphi)$.
\end{itemize}

For any formula $\phi$, we let $\llbracket\varphi\rrbracket^{\mathbb{M}}=\{w\in W\mid \mathbb{M}, w\vDash\varphi\}$ denote the \emph{truth set} of $\varphi$ in $\mathbb{M}$. The formula $\varphi$ is \emph{globally true} on a possibility model $\mathbb{M}$ (notation: $\mathbb{M}\vDash\varphi$) if $\mathbb{M}, w\vDash\varphi$ for every $w\in W$. Moreover, $\varphi$ is \emph{valid} on a pointed possibility frame $(\mathbb{F}, w)$ (notation: $\mathbb{F}, w\vDash\varphi$) if $\mathbb{F}, V, w\vDash\varphi$ for every valuation $V$. We say that $\varphi$ is \emph{valid} on a possibility frame $\mathbb{F}$ (notation: $\mathbb{F}\vDash\varphi$) if $\varphi$ is valid on $(\mathbb{F}, w)$ for every $w\in W$. The definition of global truth for inequalities and quasi-inequalities is given as follows:

\begin{itemize}
\item $\mathbb{F}, V\vDash \varphi\leq\psi$ $\qquad$ iff $\qquad$ for all $w\in W$, if $\mathbb{F}, V, w\vDash\varphi$ then $\mathbb{F}, V, w\vDash\psi$;
\item $\mathbb{F}, V\vDash\bigamp_{i=1}^{n}(\phi_i\leq\psi_i)\Rightarrow\varphi\leq\psi$$\qquad$ iff $\qquad$ if $\mathbb{F}, V\vDash\phi_i\leq\psi_i$ for all $i$ then $\mathbb{F}, V\vDash\varphi\leq\psi$.
\end{itemize}

An inequality (resp.\ quasi-inequality) is valid on $\mathbb{F}$ if it is globally true on $(\mathbb{F}, V)$ for every valuation $V$.

It is easy to check that the following truth conditions hold for defined connectives as well as inequalities:
\begin{prop}\label{aprop:additional:connectives}
The following equivalences hold for any possibility frame $\mathbb{F}=(W, \sqsubseteq, R, \mathsf{P})$, any valuation $V$ and any $w\in W$:
\begin{itemize}
\item $\mathbb{F}, V, w\vDash\top$: always;
\item $\mathbb{F}, V, w\vDash\bot$: never;
\item $\mathbb{F}, V, w\vDash \varphi\lor\psi$ iff $(\forall v\sqsubseteq w)(\exists u\sqsubseteq v)(\mathbb{F}, V, u\vDash \varphi$ or $\mathbb{F}, V, u\vDash\psi$);
\item $\mathbb{F}, V, w\vDash \varphi\to\psi$ iff $(\forall v\sqsubseteq w)(\mathbb{F}, V, v\vDash\varphi\ \Rightarrow\ \mathbb{F}, V, v\vDash\psi)$;
\item $\mathbb{F}, V, w\vDash \Diamond\varphi$ iff $(\forall v\sqsubseteq w)\exists u(Rvu\land(\exists t\sqsubseteq u)(\mathbb{F}, V, t\vDash \varphi))$;
\item $\mathbb{F}, V\vDash \varphi\to\psi$ iff $\llbracket\varphi\rrbracket^{\mathbb{F}, V}\subseteq\llbracket\psi\rrbracket^{\mathbb{F}, V}$ iff $\mathbb{F}, V\vDash \varphi\leq\psi$;
\item $\mathbb{F}\vDash\varphi\to\psi$ iff $\mathbb{F}\vDash\varphi\leq\psi$.
\end{itemize}
\end{prop}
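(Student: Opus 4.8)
The plan is to derive each clause by unfolding the defined connectives through the primitive satisfaction clauses for $p$, $\land$, $\neg$, $\Box$, together with one structural ingredient: the fact that truth sets are \emph{regular open}, hence down-sets. So my first step is to prove, by induction on $\varphi$, that $\llbracket\varphi\rrbracket^{\mathbb{F},V}\in\mathsf{P}$ (and therefore lies in $\mathsf{RO}(W,\tau_\sqsubseteq)$). The base case is $\llbracket p\rrbracket=V(p)\in\mathsf{P}$; for $\land$ note $\llbracket\varphi\land\psi\rrbracket=\llbracket\varphi\rrbracket\cap\llbracket\psi\rrbracket$, closed in the sub-Boolean algebra $\mathsf{P}$; for $\neg$ one reads off from the clause that $\llbracket\neg\varphi\rrbracket=\mathsf{int}(W\setminus\llbracket\varphi\rrbracket)=-\llbracket\varphi\rrbracket$, again in $\mathsf{P}$; and for $\Box$ we have $\llbracket\Box\varphi\rrbracket=\Box_{\mathsf{P}}(\llbracket\varphi\rrbracket)\in\mathsf{P}$ by the defining closure condition on the frame. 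By Proposition \ref{afacts:regular:open}(1) every such set is a down-set, which gives \emph{persistence}: $w\vDash\varphi$ and $v\sqsubseteq w$ imply $v\vDash\varphi$.

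With this in hand the clauses for $\top$, $\bot$, $\lor$, $\Diamond$ are routine unfoldings. For $\bot=p\land\neg p$, satisfaction at $w$ would require both $w\vDash p$ and, instantiating the refinement $w\sqsubseteq w$, also $w\nvDash p$, so $\bot$ holds nowhere; consequently $\top=\neg\bot$ holds everywhere. For $\lor=\neg(\neg\varphi\land\neg\psi)$ and $\Diamond=\neg\Box\neg\varphi$ I expand the negation and box clauses and push the negation through the quantifiers by classical logic; each time a universal $(\forall v\sqsubseteq w)$ coming from the outer $\neg$ wraps an existential coming from the negated conjunction/box, producing exactly the displayed $(\forall v\sqsubseteq w)(\exists u\sqsubseteq v)(\ldots)$ and $(\forall v\sqsubseteq w)\exists u(Rvu\land\exists t\sqsubseteq u(\ldots))$ shapes. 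None of these four needs more than reflexivity of $\sqsubseteq$.

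The real work is the clause for $\varphi\to\psi=\neg\varphi\lor\psi$, and I expect it to be the main obstacle, since it is where regular openness is genuinely used. Starting from the already-proved $\lor$ clause, $w\vDash\varphi\to\psi$ unfolds to $(\forall v\sqsubseteq w)(\exists u\sqsubseteq v)(u\vDash\neg\varphi\text{ or }u\vDash\psi)$, and I want to collapse this to $(\forall v\sqsubseteq w)(v\vDash\varphi\Rightarrow v\vDash\psi)$. For the left-to-right direction, given $v\sqsubseteq w$ with $v\vDash\varphi$, I show $v\in\llbracket\psi\rrbracket$ by using regular openness of $\llbracket\psi\rrbracket$: it suffices to find, for each $v'\sqsubseteq v$, some $v''\sqsubseteq v'$ with $v''\vDash\psi$; applying the hypothesis at $v'$ yields a witness $u\sqsubseteq v'$ with $u\vDash\neg\varphi$ or $u\vDash\psi$, and the first option is excluded because $u\sqsubseteq v$ and persistence force $u\vDash\varphi$, contradicting $u\vDash\neg\varphi$, so $u$ itself works. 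For the converse, given $v\sqsubseteq w$ I split on whether $v\vDash\varphi$: if so the hypothesis gives $v\vDash\psi$ and $u=v$ works; if not, then $v\notin\mathsf{int}(\mathsf{cl}(\llbracket\varphi\rrbracket))$ by regular openness, so some $v'\sqsubseteq v$ lies outside $\mathsf{cl}(\llbracket\varphi\rrbracket)$, which is precisely $v'\vDash\neg\varphi$, and $u=v'$ works.

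Finally, the two global clauses follow from the $\to$ clause together with reflexivity of $\sqsubseteq$: $\mathbb{F},V\vDash\varphi\to\psi$ says $w\vDash\varphi\to\psi$ for all $w$, which by the pointwise clause is $(\forall w)(\forall v\sqsubseteq w)(v\vDash\varphi\Rightarrow v\vDash\psi)$; taking $v=w$ gives $\llbracket\varphi\rrbracket\subseteq\llbracket\psi\rrbracket$, and conversely that inclusion immediately reinstates the nested condition, so the statement is equivalent to $\llbracket\varphi\rrbracket^{\mathbb{F},V}\subseteq\llbracket\psi\rrbracket^{\mathbb{F},V}$, which is the definition of $\mathbb{F},V\vDash\varphi\leq\psi$. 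Quantifying over all valuations $V$ then yields $\mathbb{F}\vDash\varphi\to\psi$ iff $\mathbb{F}\vDash\varphi\leq\psi$.
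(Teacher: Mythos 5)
Your proof is correct and is exactly the routine verification the paper leaves implicit (the proposition is stated with ``it is easy to check'' and no proof is given): establish by induction that truth sets lie in $\mathsf{P}\subseteq\mathsf{RO}(W,\tau_{\sqsubseteq})$, hence are persistent, and then unfold the defined connectives, using regular openness precisely where it is needed, namely in both directions of the $\to$ clause. No gaps.
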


The following proposition can be understood as stating that the interpretation of modal formulas on possibility frames and models can be obtained from the standard algebraic semantics for modal logic via the duality established in \cite{Ho16}:

\begin{prop}
For any possibility frame $\mathbb{F}=(W, \sqsubseteq, R, \mathsf{P})$, any valuation $V:\mathsf{Prop}\to\mathsf{P}$ and any $w\in W$,
\begin{itemize}
\item $\llbracket\top\rrbracket^{\mathbb{F},V}=W$;
\item $\llbracket\bot\rrbracket^{\mathbb{F},V}=\emptyset$;
\item $\llbracket\varphi\land\psi\rrbracket^{\mathbb{F},V}=\llbracket\varphi\rrbracket^{\mathbb{F},V}\cap\llbracket\psi\rrbracket^{\mathbb{F},V}$;
\item $\llbracket\varphi\lor\psi\rrbracket^{\mathbb{F},V}=\mathsf{ro}(\llbracket\varphi\rrbracket^{\mathbb{F},V}\cup\llbracket\psi\rrbracket^{\mathbb{F},V})=\mathsf{int}(\mathsf{cl}(\llbracket\varphi\rrbracket^{\mathbb{F},V}\cup\llbracket\psi\rrbracket^{\mathbb{F},V}))$;
\item $\llbracket\neg\varphi\rrbracket^{\mathbb{F},V}=\mathsf{int}(W\setminus\llbracket\varphi\rrbracket^{\mathbb{F},V})$;
\item $\llbracket\varphi\to\psi\rrbracket^{\mathbb{F},V}=\mathsf{int}((W\setminus\llbracket\varphi\rrbracket^{\mathbb{F},V})\cup\llbracket\psi\rrbracket^{\mathbb{F},V})$;
\item $\llbracket\Box\varphi\rrbracket^{\mathbb{F},V}=W\setminus R^{-1}[W\setminus\llbracket\varphi\rrbracket^{\mathbb{F},V}]$;
\item $\llbracket\Diamond\varphi\rrbracket^{\mathbb{F},V}=\mathsf{int}(R^{-1}[\mathsf{cl}(\llbracket\varphi\rrbracket^{\mathbb{F},V})])$.
\end{itemize}
\end{prop}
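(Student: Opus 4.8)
The plan is to proceed by a simultaneous induction on the complexity of $\varphi$, establishing at each step both that the truth set $\llbracket\varphi\rrbracket^{\mathbb{F},V}$ is a member of $\mathsf{P}$ (hence regular open, since $\mathsf{P}\subseteq\mathsf{RO}(W,\tau_{\sqsubseteq})$) and that it satisfies the displayed identity. Carrying the membership in $\mathsf{P}$ as part of the induction hypothesis is what makes the disjunction clause go through cleanly, as explained below. The base case $\varphi=p$ is immediate: $\llbracket p\rrbracket^{\mathbb{F},V}=V(p)\in\mathsf{P}$ by the definition of a valuation, and there is no identity to check. The constants are equally immediate, since $\top$ holds everywhere and $\bot$ nowhere, giving $W$ and $\emptyset$, both in $\mathsf{RO}(W,\tau_{\sqsubseteq})$ by Proposition~\ref{afacts:regular:open}(4).

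For the remaining connectives I would simply read off the set-theoretic shape of the truth set from the corresponding satisfaction clause, and then recognise that shape as the claimed topological operation. For $\land$ this is the clause in Definition~\ref{adef:poss:frame:model}, giving the intersection directly, with regular-openness by Proposition~\ref{afacts:regular:open}(5). For $\neg$ the clause $(\forall v\sqsubseteq w)(v\nvDash\varphi)$ is, by the definition of $\mathsf{int}$, exactly membership in $\mathsf{int}(W\setminus\llbracket\varphi\rrbracket^{\mathbb{F},V})$, with closure under $\mathsf{RO}$ by part (7); and $\to$ is handled identically, using the clause for $\varphi\to\psi$ in Proposition~\ref{aprop:additional:connectives} and part (10). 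For $\Box$, unfolding $\forall v(Rwv\Rightarrow v\vDash\varphi)$ gives $\{w\mid R[w]\subseteq\llbracket\varphi\rrbracket^{\mathbb{F},V}\}$, and a short complementation check shows this equals $W\setminus R^{-1}[W\setminus\llbracket\varphi\rrbracket^{\mathbb{F},V}]$; this set is precisely $\Box_{\mathsf{P}}(\llbracket\varphi\rrbracket^{\mathbb{F},V})$, which lies in $\mathsf{P}$ by the frame condition together with the induction hypothesis $\llbracket\varphi\rrbracket^{\mathbb{F},V}\in\mathsf{P}$.

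The two clauses that need slightly more care are disjunction and diamond. For $\lor$, unfolding the clause $(\forall v\sqsubseteq w)(\exists u\sqsubseteq v)(u\vDash\varphi\text{ or }u\vDash\psi)$ against the definitions of $\mathsf{int}$ and $\mathsf{cl}$ yields directly that the truth set equals $\mathsf{int}(\mathsf{cl}(\llbracket\varphi\rrbracket^{\mathbb{F},V}\cup\llbracket\psi\rrbracket^{\mathbb{F},V}))$; to then identify this with $\mathsf{ro}(\llbracket\varphi\rrbracket^{\mathbb{F},V}\cup\llbracket\psi\rrbracket^{\mathbb{F},V})$ I invoke Proposition~\ref{afacts:regular:open}(3), which requires the argument to be open. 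This is exactly where the induction hypothesis is used: $\llbracket\varphi\rrbracket^{\mathbb{F},V}$ and $\llbracket\psi\rrbracket^{\mathbb{F},V}$ are regular open, hence down-sets by part (1), so their union is a down-set and part (3) applies, while regular-openness of the result is part (6). Finally, rather than unfolding $\Diamond$ from scratch I would derive its clause from the definition $\Diamond\varphi:=\neg\Box\neg\varphi$ together with the already-established $\Box$ and $\neg$ cases and the topological identity $W\setminus\mathsf{int}(W\setminus X)=\mathsf{cl}(X)$, which collapses the nested complements to $\mathsf{int}(R^{-1}[\mathsf{cl}(\llbracket\varphi\rrbracket^{\mathbb{F},V})])$. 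The only genuine obstacle is the bookkeeping in the disjunction step---ensuring the operands are open so that $\mathsf{ro}$ and $\mathsf{int}\circ\mathsf{cl}$ coincide---which is precisely why the argument is organised as a simultaneous induction rather than a bare case analysis; everything else is a direct translation of the satisfaction clauses.
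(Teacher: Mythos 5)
Your proof is correct; the paper states this proposition without proof, treating it as a routine unfolding of the satisfaction clauses, and your argument is exactly that unfolding carried out carefully. You have also correctly isolated the one point that genuinely requires more than a bare case analysis: the identification of $\mathsf{ro}(\llbracket\varphi\rrbracket^{\mathbb{F},V}\cup\llbracket\psi\rrbracket^{\mathbb{F},V})$ with $\mathsf{int}(\mathsf{cl}(\llbracket\varphi\rrbracket^{\mathbb{F},V}\cup\llbracket\psi\rrbracket^{\mathbb{F},V}))$ is only licensed by Proposition~\ref{afacts:regular:open}(1)--(3) once one knows the truth sets are down-sets, which is precisely what carrying $\llbracket\varphi\rrbracket^{\mathbb{F},V}\in\mathsf{P}$ through a simultaneous induction provides.
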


\subsection{Algebraic semantics}\label{aSec:Algebraic:Semantics}

Thanks to the duality in \cite{Ho16}, we will be able to work throughout the paper in the environment of the dual algebras of the possibility frames, namely the Boolean algebras with operators.

\begin{definition}[Boolean algebra with operator]
A Boolean algebra with operator (BAO) is a tuple $\mathbb{B}=(B,\bot,\top,\land,\lor,-,\Box)$, where $(B, \bot,\top,\land,\lor,-)$ is a Boolean algebra and moreover, $\Box\top=\top$ and $\Box(a\land b)=\Box a\land \Box b$ for any $a,b\in B$. The order on $\mathbb{B}$ is defined as $a\leq b$ iff $a\land b=a$. We will sometimes abuse notation and use $\mathbb{B}$ to denote $B$.

For any BAO $\mathbb{B}$, let $\mathbb{B}^{\partial}$ denote its order-dual, and let $\mathbb{B}^1 = \mathbb{B}$. For any $n\in N$, an \emph{$n$-order type} $\epsilon$ is an element of $\{1, \partial\}^n$, and its $i$-th coordinate is denoted by $\epsilon_i$. We omit $n$ when it is clear from the context. Let $\epsilon^\partial$ denote the \emph{dual order type} of $\epsilon$, i.e.\ the order-type such that $\epsilon^\partial_i=1$ (resp.\ $\partial$) if $\epsilon_i=\partial$ (resp.\ $1$). For any $n$-order type $\epsilon$, we let $\mathbb{B}^{\epsilon}$ be the product algebra $\mathbb{B}^{\epsilon_1}\times\ldots\times\mathbb{B}^{\epsilon_n}$.

An \emph{assignment} on $\mathbb{B}$ is a map $\theta:\mathsf{Prop}\to\mathbb{B}$, which can be extended to all formulas as usual. We use $\varphi^{(\mathbb{B}, \theta)}$ or $\theta(\varphi)$ to denote the value of $\varphi$ in $\mathbb{B}$ under $\theta$. We say that a formula $\varphi$ (resp.\ an inequality $\varphi\leq\psi$) is \emph{true} on $\mathbb{B}$ under $\theta$ (notation: $(\mathbb{B},\theta)\vDash\varphi$, $(\mathbb{B},\theta)\vDash\varphi\leq\psi$), if $\varphi^{(\mathbb{B}, \theta)}=\top$ (resp.\ $\varphi^{(\mathbb{B}, \theta)}\leq\psi^{(\mathbb{B}, \theta)}$), and $\varphi$ (resp.\ $\varphi\leq\psi$) is \emph{valid} on $\mathbb{B}$ (notation: $\mathbb{B}\vDash\varphi$, $\mathbb{B}\vDash\varphi\leq\psi$) if $(\mathbb{B},\theta)\vDash\varphi$ (resp.\ $(\mathbb{B},\theta)\vDash\varphi\leq\psi$) for every $\theta$. 

A quasi-inequality $\bigamp_{i=1}^{n}(\phi_i\leq\psi_i)\Rightarrow\varphi\leq\psi$ is true on $\mathbb{B}$ under $\theta$ (notation: $(\mathbb{B},\theta)\vDash\bigamp_{i=1}^{n}(\phi_i\leq\psi_i)\Rightarrow\varphi\leq\psi$) if $\varphi^{(\mathbb{B}, \theta)}\leq\psi^{(\mathbb{B}, \theta)}$ holds whenever $\varphi_i^{(\mathbb{B}, \theta)}\leq\psi_i^{(\mathbb{B}, \theta)}$ holds for every $1\leq i\leq n$, and $\bigamp_{i=1}^{n}(\phi_i\leq\psi_i)\Rightarrow\varphi\leq\psi$ is valid on $\mathbb{B}$ (notation: $\mathbb{B}\vDash\bigamp_{i=1}^{n}(\phi_i\leq\psi_i)\Rightarrow\varphi\leq\psi$) if $(\mathbb{B},\theta)\vDash\bigamp_{i=1}^{n}(\phi_i\leq\psi_i)\Rightarrow\varphi\leq\psi$ for every $\theta$.

Another useful way to look at a formula $\varphi(p_1, \ldots, p_n)$ is to interpret it as an $n$-ary function $\varphi^{\mathbb{B}}:\mathbb{B}^{n}\to\mathbb{B}$ such that $\varphi^{\mathbb{B}}(a_1, \ldots, a_n)=\theta(\varphi)$ where $\theta:\mathsf{Prop}\to\mathbb{B}$ satisfies $\theta(p_i)=a_i$, $i=1, \ldots, n$.

Recall that an element $b\in \mathbb{B}$ is an \emph{atom} if $b\neq\bot$ and for any $c\in B$ s.t. $c\leq b$, either $c=\bot$ or $c=b$.
Moreover, a modal operator $\Box$ on $\mathbb{B}$ is \emph{completely multiplicative} if $\bigwedge\{\Box a\mid a\in A\}$ exists for any $A\subseteq B$ such that $\bigwedge A$ exists, and $\bigwedge\{\Box a\mid a\in A\}=\Box(\bigwedge A)$. A BAO is:
\begin{itemize}
\item[($\mathcal{C}$)] \emph{complete} if the greatest lower bound $\bigwedge A$ and least upper bound $\bigvee A$ exist for any $A\subseteq B$;
\item[($\mathcal{A}$)] \emph{atomic} if for any $a\neq\bot$ there exists some atom $b\in B$ such that $\bot\neq b\leq a$;
\item[($\mathcal{V}$)] \emph{completely multiplicative}\footnote{In \cite{Ho16}, algebras satisfying condition ($\mathcal{V}$) are called completely additive.} if $\Box$ is completely multiplicative.
\end{itemize}
A BAO is a $\mathcal{CV}$-\emph{BAO} if it is complete and completely multiplicative, and is a $\mathcal{CAV}$-\emph{BAO} if it is complete, atomic and completely multiplicative, and other abbreviations are given in a similar way. Notice that the definitions of completeness and atomicity also apply to Boolean algebras.
\end{definition}

As to the correspondence between BAOs and possibility frames, the following definition provides the frame-to-algebra direction:

\begin{definition}
For any possibility frame $\mathbb{F}=(W, \sqsubseteq, R, \mathsf{P})$, let the BAO $\mathsf{P}$ of Definition \ref{adef:poss:frame:model} be the BAO \emph{dual} to $\mathbb{F}$, denoted by $\mathbb{B}_{\mathsf{P}}$. If $\mathbb{F}$ is a full possibility frame, then $\mathsf{P}=\mathsf{RO}(W,\tau_{\sqsubseteq})$ and we refer to $\mathbb{B}_{\mathsf{P}}$ as $\mathbb{B}_{\mathsf{RO}}$ (the \emph{regular open dual BAO} of $\mathbb{F}$).
\end{definition}

\begin{prop}
For any full possibility frame $\mathbb{F}$, $\mathbb{B}_{\mathsf{RO}}$ is a $\mathcal{CV}$-BAO.
\end{prop}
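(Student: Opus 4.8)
The plan is to verify the two defining conditions of a $\mathcal{CV}$-BAO separately, namely completeness ($\mathcal{C}$) and complete multiplicativity ($\mathcal{V}$), using the facts already collected in Proposition~\ref{afacts:regular:open}. Throughout, recall that for a full possibility frame $\mathsf{P}=\mathsf{RO}(W,\tau_{\sqsubseteq})$, so that $\mathbb{B}_{\mathsf{RO}}$ carries the Boolean operations of item~9 together with the operator $\Box_{\mathsf{P}}(X)=\{w\in W\mid R[w]\subseteq X\}$, which by Definition~\ref{adef:poss:frame:model} maps $\mathsf{RO}(W,\tau_{\sqsubseteq})$ into itself.

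For completeness, the key observation is that arbitrary meets in $\mathbb{B}_{\mathsf{RO}}$ are computed as set-theoretic intersections. Indeed, by item~8 the collection $\mathsf{RO}(W,\tau_{\sqsubseteq})$ is closed under arbitrary intersection; and since $\bigcap A$ is contained in every member of a family $A\subseteq\mathsf{RO}(W,\tau_{\sqsubseteq})$, while any regular open lower bound of $A$ is contained in $\bigcap A$, the intersection $\bigcap A$ is precisely the greatest lower bound $\bigwedge A$. Hence all meets exist. Since $\mathbb{B}_{\mathsf{RO}}$ is a Boolean algebra (item~9), the existence of all meets forces the existence of all joins via $\bigvee A=-\bigwedge\{-a\mid a\in A\}$ (concretely $\bigvee A=\mathsf{int}(\mathsf{cl}(\bigcup A))$), so $\mathbb{B}_{\mathsf{RO}}$ is complete.

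For complete multiplicativity, I would argue directly that $\Box_{\mathsf{P}}$ preserves these (now always existing) meets. Fix $A\subseteq\mathsf{RO}(W,\tau_{\sqsubseteq})$; by the previous paragraph $\bigwedge A=\bigcap A$, and likewise $\bigwedge\{\Box_{\mathsf{P}}a\mid a\in A\}=\bigcap_{a\in A}\Box_{\mathsf{P}}a$. The identity $\Box_{\mathsf{P}}(\bigcap A)=\bigcap_{a\in A}\Box_{\mathsf{P}}a$ is then a one-line pointwise computation: $w\in\Box_{\mathsf{P}}(\bigcap A)$ iff $R[w]\subseteq\bigcap A$ iff $R[w]\subseteq a$ for every $a\in A$ iff $w\in\Box_{\mathsf{P}}a$ for every $a\in A$ (the case $A=\emptyset$ holding vacuously, with both sides equal to $\top=W$). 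This establishes condition $(\mathcal{V})$.

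The argument is essentially routine; the only point that requires care --- and the reason the computation comes out clean --- is that meets in $\mathbb{B}_{\mathsf{RO}}$ agree with intersections, whereas joins do \emph{not} agree with unions (they are regular open closures of unions). Complete multiplicativity of $\Box_{\mathsf{P}}$ goes through smoothly precisely because its defining condition $R[w]\subseteq X$ is universal and hence commutes with intersection; the dual statement about $\Diamond$ and joins would be more delicate, but is not needed here.
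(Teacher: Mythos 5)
Your proof is correct and follows exactly the route the paper intends: the paper states this proposition without proof, but your argument uses precisely the facts it has set up (closure of $\mathsf{RO}(W,\tau_{\sqsubseteq})$ under arbitrary intersections from Proposition \ref{afacts:regular:open}, so that meets are intersections, plus the pointwise computation showing $\Box_{\mathsf{P}}$ commutes with intersections), and the same observation reappears later when the paper notes that $e$ is completely meet-preserving. Your closing remark correctly isolates the one point of care, namely that joins are regular open closures of unions rather than unions.
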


The existence of the frame-to-algebra direction of the duality defined above induces a bijection between valuations on $\mathbb{F}$ and interpretations of propositional variables into $\mathbb{B}_{\mathsf{P}}$.

\begin{definition}
For any possibility frame $\mathbb{F}=(W, \sqsubseteq, R, \mathsf{P})$, any valuation $V$ on $\mathbb{F}$ can be associated with the assignment $\theta_{V}:\mathsf{Prop}\to\mathbb{B}_{\mathsf{P}}$ defined by $\theta_{V}(p):=V(p)$ for every $p\in\mathsf{Prop}$. Conversely, any assignment $\theta:\mathsf{Prop}\to\mathbb{B}_{\mathsf{P}}$ can be associated with the valuation $V_{\theta}:\mathsf{Prop}\to\mathsf{P}$ defined by $V_{\theta}(p)=\theta(p)$ for every $p\in\mathsf{Prop}$.
\end{definition}

It is easy to check the following equivalences hold:

\begin{prop}\label{aprop:equiv:duality}
For any possibility frame $\mathbb{F}=(W, \sqsubseteq, R, \mathsf{P})$, for any valuation $V:\mathsf{Prop}\to\mathsf{P}$ on $\mathbb{F}$, any assignment $\theta:\mathsf{Prop}\to\mathsf{P}$ on $\mathbb{B}_{\mathsf{P}}$, any $w\in W$, any formula $\phi$, any inequality $\phi\leq\psi$, any quasi-inequality $\bigamp_{i=1}^{n}(\phi_i\leq\psi_i)\Rightarrow\varphi\leq\psi$,
\begin{itemize}
\item $\mathbb{F}, V, w\vDash\varphi$ iff $w\in \theta_{V}(\varphi)$;
\item $\mathbb{F}, V_{\theta}, w\vDash\varphi$ iff $w\in \theta(\varphi)$;
\item $\mathbb{F}\vDash\varphi$ iff $\mathbb{B}_{\mathsf{P}}\vDash\varphi$;
\item $\mathbb{F}, V\vDash\phi\leq\psi$ iff $\mathbb{B}_{\mathsf{P}}, \theta_{V}\vDash\phi\leq\psi$;
\item $\mathbb{F}, V_{\theta}\vDash\phi\leq\psi$ iff $\mathbb{B}_{\mathsf{P}}, \theta\vDash\phi\leq\psi$;
\item $\mathbb{F}\vDash\phi\leq\psi$ iff $\mathbb{B}_{\mathsf{P}}\vDash\phi\leq\psi$;
\item $\mathbb{F}, V\vDash\bigamp_{i=1}^{n}(\phi_i\leq\psi_i)\Rightarrow\varphi\leq\psi$ iff $\mathbb{B}_{\mathsf{P}}, \theta_{V}\vDash\bigamp_{i=1}^{n}(\phi_i\leq\psi_i)\Rightarrow\varphi\leq\psi$;
\item $\mathbb{F}, V_{\theta}\vDash\bigamp_{i=1}^{n}(\phi_i\leq\psi_i)\Rightarrow\varphi\leq\psi$ iff $\mathbb{B}_{\mathsf{P}}, \theta\vDash\bigamp_{i=1}^{n}(\phi_i\leq\psi_i)\Rightarrow\varphi\leq\psi$;
\item $\mathbb{F}\vDash\bigamp_{i=1}^{n}(\phi_i\leq\psi_i)\Rightarrow\varphi\leq\psi$ iff $\mathbb{B}_{\mathsf{P}}\vDash\bigamp_{i=1}^{n}(\phi_i\leq\psi_i)\Rightarrow\varphi\leq\psi$.
\end{itemize}
\end{prop}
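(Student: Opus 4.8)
The plan is to reduce the entire list of equivalences to a single \emph{truth lemma} relating the truth set of a formula under a valuation to its algebraic value under the corresponding assignment, and then to harvest the remaining items essentially for free from the definitions. The key preliminary observation is that, since $\mathbb{B}_{\mathsf{P}}$ is literally $\mathsf{P}$ equipped with its BAO structure, the assignments $\theta_V$ and valuations $V_\theta$ agree with $V$ and $\theta$ respectively as underlying maps $\mathsf{Prop}\to\mathsf{P}$. In particular $\theta_{V_\theta}=\theta$ and $V_{\theta_V}=V$, so $V\mapsto\theta_V$ and $\theta\mapsto V_\theta$ are mutually inverse bijections between valuations and assignments. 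This already shows that the $V_\theta$-variant of each bullet is just the $V$-variant applied to $V_\theta$, and that the ``global'' frame-side items will match the ``valid'' algebra-side items once the pointwise lemma is in place.

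The core step is the truth lemma: for every formula $\varphi$ and every valuation $V$, $\llbracket\varphi\rrbracket^{\mathbb{F},V}=\theta_V(\varphi)$, equivalently $\mathbb{F},V,w\vDash\varphi$ iff $w\in\theta_V(\varphi)$. I would prove this by induction on $\varphi$. The base case $\varphi=p$ is immediate, since $\theta_V(p)=V(p)=\llbracket p\rrbracket^{\mathbb{F},V}$. For the inductive cases I would invoke the proposition computing $\llbracket\cdot\rrbracket$ in terms of set-theoretic and topological operations, together with Proposition \ref{afacts:regular:open}(9) and the definition of $\Box_{\mathsf{P}}$ in Definition \ref{adef:poss:frame:model}: for each connective the semantic clause coincides with the corresponding BAO operation applied to the truth sets of the immediate subformulas. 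Concretely, $\land$ matches $\cap$, $\neg$ matches $-X=\mathsf{int}(W\setminus X)$, and $\Box$ matches $\Box_{\mathsf{P}}(X)=\{w\mid R[w]\subseteq X\}=W\setminus R^{-1}[W\setminus X]$; applying the induction hypothesis then yields $\llbracket\varphi\rrbracket^{\mathbb{F},V}=\theta_V(\varphi)$. Along the way I would also record that each $\llbracket\varphi\rrbracket^{\mathbb{F},V}$ lies in $\mathsf{P}$, which holds because $V$ maps into $\mathsf{P}$ and $\mathsf{P}$ is closed under these operations; this guarantees that $\theta_V(\varphi)$ is well defined and that the comparison takes place inside $\mathsf{P}$.

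Granting the truth lemma, the remaining bullets follow mechanically. For a formula, $\mathbb{F}\vDash\varphi$ says $\theta_V(\varphi)=W$ for every $V$, and since $W=\top$ in $\mathbb{B}_{\mathsf{P}}$ and every assignment is some $\theta_V$, this is exactly $\mathbb{B}_{\mathsf{P}}\vDash\varphi$. For inequalities I would use that the order of $\mathbb{B}_{\mathsf{P}}$ is set inclusion (because $X\land Y=X$ iff $X\cap Y=X$ iff $X\subseteq Y$), so that $\mathbb{F},V\vDash\phi\leq\psi$, i.e.\ $\llbracket\phi\rrbracket^{\mathbb{F},V}\subseteq\llbracket\psi\rrbracket^{\mathbb{F},V}$, coincides with $\theta_V(\phi)\leq\theta_V(\psi)$, which is $\mathbb{B}_{\mathsf{P}},\theta_V\vDash\phi\leq\psi$; the $V_\theta$ and ``valid'' variants then drop out of the bijection between valuations and assignments. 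Finally the quasi-inequality bullets are immediate from the inequality bullets, by unwinding the meta-level conjunction and implication in the frame-side and algebra-side definitions of global truth and validity.

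I do not expect a genuine obstacle here. The only point requiring care is the bookkeeping in the inductive step of the truth lemma, namely matching each semantic clause to the correct BAO operation and checking that truth sets remain inside $\mathsf{P}$; but all the identities needed for this are already supplied by the preceding propositions, so the argument is routine once the reduction to the truth lemma is carried out.
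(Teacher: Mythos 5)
Your proposal is correct and is precisely the routine verification the paper has in mind: the paper offers no proof at all (it merely asserts ``It is easy to check the following equivalences hold''), and the standard argument is exactly your reduction to a truth lemma $\llbracket\varphi\rrbracket^{\mathbb{F},V}=\theta_V(\varphi)$ proved by induction on $\varphi$, matching each satisfaction clause with the corresponding BAO operation and using that $V\mapsto\theta_V$ and $\theta\mapsto V_\theta$ are mutually inverse. Your additional care in checking that truth sets stay inside $\mathsf{P}$ and that the algebraic order is set inclusion is exactly the bookkeeping the paper leaves implicit.
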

The duality theoretic facts outlined above make it possible to transfer the development of correspondence theory from frames to algebras, similarly to the way in which algebraic correspondence is developed for Kripke frames. In particular, $\mathbb{B}_{\mathsf{RO}}$ will be the algebra where correspondence theory over full possibility frames is developed. However, the essential difference between the correspondence for Kripke semantics and the present setting is that the algebra $\mathbb{B}_{\mathsf{RO}}$ is not atomic in general (see \cite[Example 2.40]{Ho16}). This implies that some of the rules of the original ALBA-type algorithm (cf.\ \cite{CoGoVa06,CoPa12}) for complex algebras of Kripke frames (namely, the so-called approximation rules which relied on atomicity) are not going to be sound in this setting. The analysis of the semantic environment of the regular open dual BAOs, developed in the next section, will give insights on how to design the algorithm in this semantic setting.

\section{Semantic environment for the language of ALBA}\label{aSec:algebraic:analysis}

In the present section, we will provide the algebraic semantic environment for the correspondence algorithm ALBA in the setting of possibility semantics. We will show the semantic properties which will be used for the interpretation of the expanded modal language of the algorithm ALBA in Section \ref{asec:expanded:modal:language}. The first notable feature of this language is that it includes special variables (besides the propositional variables), the so-called {\em nominals}, which in the original setting are interpreted as the {\em atoms} of the complex algebras of Kripke frames. This interpretation of nominals pivots on the fact that the complex algebras of Kripke frames are atomic, that is, are completely join-generated by their atoms. Likewise, in order to define a suitable interpretation for the nominals in the possibility setting, we need to find a class of elements which join-generate the complex algebra of any full possibility frame. Towards this goal, our strategy will consist in defining a BAO $\mathbb{B}_{\mathsf{K}}$ in which the BAO $\mathbb{B}_{\mathsf{RO}}$ can be order-embedded. This algebra will be used as an auxiliary tool to show that every element in $\mathbb{B}_{\mathsf{RO}}$ can be represented as the join of regular open closures of atoms in $\mathbb{B}_{\mathsf{K}}$. Hence, this will show that the regular open closures of atoms in $\mathbb{B}_{\mathsf{K}}$ are a suitable class of interpretants for nominal variables. 

The second notable feature of the expanded language of ALBA is that it includes additional modal operators interpreted as the adjoints of the modal operators of the original language. In what follows, we will show that also these connectives have a natural interpretation in $\mathbb{B}_{\mathsf{RO}}$.

\subsection{The auxiliary BAO $\mathbb{B}_{\mathsf{K}}$}

Clearly, any full possibility frame $\mathbb{F}_{\mathsf{RO}}=(W, \sqsubseteq, R, \mathsf{RO}(W,\tau_{\sqsubseteq}))$ can be associated with the {\em bimodal frame} $\mathbb{F}_{\mathsf{K}}=(W, \sqsubseteq, R)$, the complex algebra of which is the bimodal BAO $\mathbb{B}_{\mathsf{K}}$ (cf.\ \cite{vBBeHo16}).

Diagrammatically, the $\mathcal{CAV}$-BAO $\mathbb{B}_{\mathsf{K}}$ dually corresponds to the Kripke frame $\mathbb{F}_{\mathsf{K}}$, and the $\mathcal{CV}$-BAO $\mathbb{B}_{\mathsf{RO}}$ to the full possibility frame $\mathbb{F}_{\mathsf{RO}}$, $e$ is the order-embedding which sends a regular open subset in $\mathbb{B}_{\mathsf{RO}}$ to itself in $\mathbb{B}_{\mathsf{K}}$, and $U$ sends a full possibility frame to its underlying bimodal Kripke frame ``forgetting'' the algebra $\mathsf{RO}(W,\tau_{\sqsubseteq})$ (and hence the restriction on the admissible valuations)

\begin{center}
\begin{tikzpicture}[node/.style={circle, draw, fill=black}, scale=1]\label{atable:U:shape}
\node (BRO) at (-1.5,-1.5) {$\mathbb{B}_{\mathsf{RO}}$};
\node (Bfull) at (-1.5,1.5) {$\mathbb{B}_{\mathsf{K}}$};
\node (FP) at (1.5,-1.5) {$\mathbb{F}_{\mathsf{RO}}$};
\node (FK) at (1.5,1.5) {$\mathbb{F}_{\mathsf{K}}$};
\draw [right hook->] (BRO) to node[left]{$e$} (Bfull);
\draw [->] (FP) to node[right]{$U$} (FK);
\draw [<->] (BRO) to node[above] {$\cong^{\partial}$} (FP);
\draw [<->] (Bfull) to node[above] {$\cong^{\partial}$} (FK);
\end{tikzpicture}
\end{center}

The formal definition of the BAO $\mathbb{B}_{\mathsf{K}}$ is reported below:

\begin{definition}[Full dual Boolean algebra with operators]
For any full possibility frame $\mathbb{F}=(W, \sqsubseteq, R, \mathsf{RO}(W,\tau_{\sqsubseteq}))$, the \emph{full dual Boolean algebra with operators (full dual BAO)} $\mathbb{B}_{\mathsf{K}}$\footnote{Notice that here the ``full'' means that the carrier set is the powerset of $W$, rather than the ``full'' in ``full possibility frame''.} is defined as $\mathbb{B}_{\mathsf{K}}=(P(W), \emptyset, W, \cap, \cup, -, \Box_{\mathsf{K}}, \Box_{\sqsubseteq})$, where $\cap, \cup, -$ are set-theoretic intersection, union and complementation respectively, $\Box_{\mathsf{K}}(a)=\{w\in W\mid R[w]\subseteq a\}$, and $\Box_{\sqsubseteq}(a)=\{w\in W\mid (\forall v\sqsubseteq w)(v\in a)\}$.
\end{definition}

It is easy to see that $\mathbb{B}_{\mathsf{K}}$ is a complete \emph{atomic} Boolean algebra with complete operators. It is also clear that the carrier set of the regular open dual BAO is a subset of the full dual BAO, hence the natural embedding $e:\mathbb{B}_{\mathsf{RO}}\hookrightarrow\mathbb{B}_{\mathsf{K}}$ is well defined and is an order-embedding. Notice that by Proposition \ref{afacts:regular:open}, arbitrary intersections of regular open sets in a downset topology are again regular open, therefore $e$ is completely meet-preserving. Notice also that $\Box_{\mathsf{RO}}$ is the restriction of $\Box_{\mathsf{K}}$ to $\mathbb{B}_{\mathsf{RO}}$. All these observations can be summarized as follows:

\begin{lemma}\label{alemma:preserve:e}
$e:\mathbb{B}_{\mathsf{RO}}\hookrightarrow\mathbb{B}_{\mathsf{K}}$ is a completely meet-preserving order-embedding such that $e\circ \Box_{\mathsf{RO}}=\Box_{\mathsf{K}}\circ e$.
\end{lemma}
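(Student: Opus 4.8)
The plan is to establish the three asserted properties of $e$ in turn, each following directly from facts already collected in the excerpt, so the main work is one of assembling rather than discovering.

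First I would verify that $e$ is a \emph{completely meet-preserving order-embedding}. That $e$ is an order-embedding is immediate: it is the set-theoretic inclusion $\mathsf{RO}(W,\tau_{\sqsubseteq})\hookrightarrow P(W)$, and since the order on both $\mathbb{B}_{\mathsf{RO}}$ and $\mathbb{B}_{\mathsf{K}}$ is set inclusion (by Proposition~\ref{afacts:regular:open}(9) the meet in $\mathbb{B}_{\mathsf{RO}}$ is genuine intersection, so $X\leq Y$ iff $X\cap Y=X$ iff $X\subseteq Y$ in either algebra), we have $X\subseteq Y$ in $\mathbb{B}_{\mathsf{RO}}$ iff $e(X)\subseteq e(Y)$ in $\mathbb{B}_{\mathsf{K}}$. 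For complete meet-preservation, the key point is that the meet of a family $\{X_i\}_{i\in I}$ computed in $\mathbb{B}_{\mathsf{RO}}$ coincides with the meet computed in $\mathbb{B}_{\mathsf{K}}$. By Proposition~\ref{afacts:regular:open}(8), $\mathsf{RO}(W,\tau_{\sqsubseteq})$ is closed under arbitrary intersections, so $\bigcap_{i\in I}X_i$ is already regular open; hence it is the greatest lower bound of the family inside $\mathbb{B}_{\mathsf{RO}}$, and it is simultaneously the greatest lower bound inside $\mathbb{B}_{\mathsf{K}}$ (where meets are set intersections). Therefore $e\bigl(\bigwedge^{\mathbb{B}_{\mathsf{RO}}}_{i\in I}X_i\bigr)=\bigcap_{i\in I}e(X_i)=\bigwedge^{\mathbb{B}_{\mathsf{K}}}_{i\in I}e(X_i)$, which is exactly complete meet-preservation.

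It remains to check the intertwining identity $e\circ\Box_{\mathsf{RO}}=\Box_{\mathsf{K}}\circ e$. This reduces to the observation, already stated in the excerpt, that $\Box_{\mathsf{RO}}$ is the restriction of $\Box_{\mathsf{K}}$ to $\mathbb{B}_{\mathsf{RO}}$. Concretely, for a regular open $X$, both $\Box_{\mathsf{RO}}(X)$ and $\Box_{\mathsf{K}}(X)$ are defined by the same rule $\{w\in W\mid R[w]\subseteq X\}$ (compare the definition of $\Box_{\mathsf{P}}$ in Definition~\ref{adef:poss:frame:model} with $\Box_{\mathsf{K}}$ in the full dual BAO), and the admissibility condition built into the definition of a full possibility frame guarantees that this set is itself regular open, so it lies in the image of $e$. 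Thus for every $X\in\mathbb{B}_{\mathsf{RO}}$ we have $e(\Box_{\mathsf{RO}}(X))=\{w\mid R[w]\subseteq X\}=\Box_{\mathsf{K}}(X)=\Box_{\mathsf{K}}(e(X))$, and since $X$ was arbitrary the two composite maps agree.

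I do not expect a genuine obstacle here: each clause is underwritten by a fact already recorded (closure under arbitrary intersection in Proposition~\ref{afacts:regular:open}(8) for complete meet-preservation, and the coincidence of the box-definitions for the intertwining identity). If there is any subtle point to be careful about, it is the distinction between the meet operation \emph{internal} to $\mathbb{B}_{\mathsf{RO}}$ and set-theoretic intersection in $P(W)$; the argument works precisely because Proposition~\ref{afacts:regular:open} collapses this distinction for intersections (both binary and arbitrary), whereas it would fail for joins, where the $\mathbb{B}_{\mathsf{RO}}$-join is $\mathsf{int}(\mathsf{cl}(X\cup Y))$ rather than $X\cup Y$. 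This is exactly why the lemma asserts \emph{meet}-preservation and not join-preservation, and why $e$ has a left adjoint $c$ rather than a right adjoint.
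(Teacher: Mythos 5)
Your proposal is correct and follows essentially the same route as the paper, which justifies the lemma in the short passage preceding its statement: the order-embedding is the set-theoretic inclusion, complete meet-preservation follows from closure of $\mathsf{RO}(W,\tau_{\sqsubseteq})$ under arbitrary intersections (Proposition \ref{afacts:regular:open}), and the intertwining identity holds because $\Box_{\mathsf{RO}}$ is the restriction of $\Box_{\mathsf{K}}$. You simply spell out the details (in particular why the intersection computed in $\mathbb{B}_{\mathsf{K}}$ is also the greatest lower bound in $\mathbb{B}_{\mathsf{RO}}$, and why meet- but not join-preservation holds) that the paper leaves implicit.
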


However, it is important to stress that, since $\mathbb{B}_{\mathsf{RO}}$ and $\mathbb{B}_{\mathsf{K}}$ have different definitions of join and complementation, $\mathbb{B}_{\mathsf{RO}}$ is \emph{not} a subalgebra of $\mathbb{B}_{\mathsf{K}}$. 

The next corollary follows immediately from the previous lemma (see e.g.\ \cite[Proposition 7.34]{DaPr90}):

\begin{cor}\label{acor:existence:left:adjoint}
$e:\mathbb{B}_{\mathsf{RO}}\hookrightarrow\mathbb{B}_{\mathsf{K}}$ has a left adjoint $c:\mathbb{B}_{\mathsf{K}}\to\mathbb{B}_{\mathsf{RO}}$ defined, for every $a\in \mathbb{B}_{\mathsf{K}}$,
\begin{center}
$c(a)=\bigwedge_{\mathsf{RO}}\{b\in\mathbb{B}_{\mathsf{RO}}\mid a\leq e(b)\}.$
\end{center}
\end{cor}
Clearly, $c(X)=\mathsf{ro}(X)$ for any $X\subseteq W$, Indeed, by definition $c(X)=\bigwedge_{\mathsf{RO}}\{Y\in\mathsf{RO}(W,\tau_{\sqsubseteq})\mid X\leq e(Y)\}=\bigcap\{Y\in\mathsf{RO}(W,\tau_{\sqsubseteq})\mid X\subseteq Y\}$, which is the least regular open set containing $X$. The closure operator $c$ will be referred to as the \emph{regular open closure map} and $c(a)$ as the \emph{regular open closure} of $a$. We let $\mathsf{PsAt}(\mathbb{B}_{\mathsf{RO}}):=\{c(x)\mid x\in\mathsf{At}(\mathbb{B}_{\mathsf{K}})\}$ (here $\mathsf{PsAt}$ stands for pseudo-atom, and $\mathsf{At}(\mathbb{B})$ denotes the set of atoms in the BAO $\mathbb{B}$) be the set of regular open closures of atoms in $\mathbb{B}_{\mathsf{K}}$, which will be shown to be the join-generators of $\mathbb{B}_{\mathsf{RO}}$.

\subsubsection{A class of interpretants for nominals}
As mentioned ealy on, the key requirement for a suitable class of interpretants for nominals is that it is join-dense in $\mathbb{B}_{\mathsf{RO}}$. In what follows, we give a proof of this property for $\mathsf{PsAt}(\mathbb{B}_{\mathsf{RO}})$. This result has already been proved in \cite[Lemma 3.7]{Ya16}; we give an alternative proof in the dual algebraic setting. Let us preliminarily recall that, by general facts of the theory of closure operators on posets, $c\circ e = Id_{\mathbb{B}_{\mathsf{RO}}}$.

\begin{prop}\label{aprop:regular:open:join}
For any $a\in \mathbb{B}_{\mathsf{RO}}$, $$a=\bigvee_{\mathsf{RO}}\{c(x)\mid x\in \mathsf{At}(\mathbb{B}_{\mathsf{K}})\mbox{ and }x\leq e(a)\}=\bigvee_{\mathsf{RO}}\{y\in\mathsf{PsAt}(\mathbb{B}_{\mathsf{RO}})\mid y\leq a\}.$$
\end{prop}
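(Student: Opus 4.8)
The plan is to leverage the adjunction $c \dashv e$ from Corollary~\ref{acor:existence:left:adjoint} together with the atomicity of $\mathbb{B}_{\mathsf{K}}$, reducing the first identity to the single computation $a = c(e(a))$ followed by pushing a join through the join-preserving map $c$, and then obtaining the second identity by reindexing.

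First I would record the facts about the adjunction that I will use. Since $e$ is an order-embedding with left adjoint $c$, we have $c \circ e = Id_{\mathbb{B}_{\mathsf{RO}}}$ (recalled just above the statement), the adjunction law $c(x) \leq b \iff x \leq e(b)$ for all $x \in \mathbb{B}_{\mathsf{K}}$ and $b \in \mathbb{B}_{\mathsf{RO}}$, and the unit inequality $x \leq e(c(x))$ for all $x \in \mathbb{B}_{\mathsf{K}}$. Moreover, being a left adjoint, $c$ preserves all existing joins; in particular it carries joins computed in $\mathbb{B}_{\mathsf{K}}$ to joins computed in $\mathbb{B}_{\mathsf{RO}}$.

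For the first equality I would then invoke that $\mathbb{B}_{\mathsf{K}}$ is a complete \emph{atomic} Boolean algebra, so every element is the join in $\mathbb{B}_{\mathsf{K}}$ of the atoms below it; applied to $e(a)$ this gives $e(a) = \bigvee_{\mathsf{K}}\{x \in \mathsf{At}(\mathbb{B}_{\mathsf{K}}) \mid x \leq e(a)\}$. Applying $c$, and using both that $c$ preserves joins and that $c \circ e = Id_{\mathbb{B}_{\mathsf{RO}}}$, I obtain
$$a = c(e(a)) = c\Big(\bigvee\nolimits_{\mathsf{K}}\{x \mid x \leq e(a)\}\Big) = \bigvee\nolimits_{\mathsf{RO}}\{c(x) \mid x \in \mathsf{At}(\mathbb{B}_{\mathsf{K}}),\ x \leq e(a)\},$$
which is exactly the first claimed identity. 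Notice that no separate verification of the two inequalities $\leq$ and $\geq$ is needed: equality drops out of this computation in one shot.

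Finally, for the second equality I would show that the two index sets describe the same subset of $\mathbb{B}_{\mathsf{RO}}$, whence the two joins coincide. By definition $\mathsf{PsAt}(\mathbb{B}_{\mathsf{RO}}) = \{c(x) \mid x \in \mathsf{At}(\mathbb{B}_{\mathsf{K}})\}$, and the adjunction law gives $c(x) \leq a \iff x \leq e(a)$; hence every $c(x)$ with $x \leq e(a)$ is a pseudo-atom below $a$. Conversely, if $y \in \mathsf{PsAt}(\mathbb{B}_{\mathsf{RO}})$ with $y \leq a$, I write $y = c(x)$ for some atom $x$, and then the unit inequality $x \leq e(c(x)) = e(y) \leq e(a)$ exhibits $y$ as $c(x)$ with $x \leq e(a)$. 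Thus the sets $\{c(x) \mid x \in \mathsf{At}(\mathbb{B}_{\mathsf{K}}),\ x \leq e(a)\}$ and $\{y \in \mathsf{PsAt}(\mathbb{B}_{\mathsf{RO}}) \mid y \leq a\}$ agree, so their joins agree. The only point genuinely requiring care --- the potential main obstacle --- is the join-preservation of $c$ in the displayed computation, but this is immediate from its being a left adjoint (Corollary~\ref{acor:existence:left:adjoint}), so I expect the whole argument to be short and purely formal, with the algebraic content sitting entirely in the atomicity of $\mathbb{B}_{\mathsf{K}}$ and the adjunction $c \dashv e$.
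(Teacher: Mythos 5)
Your proposal is correct and follows essentially the same route as the paper's own (very terse) proof: write $a = c(e(a))$, expand $e(a)$ as the join of the atoms of $\mathbb{B}_{\mathsf{K}}$ below it, push the join through the left adjoint $c$, and obtain the second identity from the definition of $\mathsf{PsAt}(\mathbb{B}_{\mathsf{RO}})$ and the adjunction $c \dashv e$. You merely make explicit the atomicity of $\mathbb{B}_{\mathsf{K}}$ and the reindexing of the two joins, which the paper leaves implicit.
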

\begin{proof}
The first equality follows from the fact that $c\circ e = Id_{\mathbb{B}_{\mathsf{RO}}}$ and that left adjoint preserve arbitrary existing joins; the second equality, from the definition of $\mathsf{PsAt}(\mathbb{B}_{\mathsf{RO}})$ and adjunction between $c$ and $e$.
\end{proof}

\subsection{Interpreting the additional connectives of the expanded language of ALBA}

As mentioned early on, $\mathbb{B}_{\mathsf{RO}}$ and $\mathbb{B}_{\mathsf{K}}$ are both complete, and $\Box_{\mathsf{RO}}:\mathbb{B}_{\mathsf{RO}}\to\mathbb{B}_{\mathsf{RO}}$ and $\Box_{\mathsf{K}}:\mathbb{B}_{\mathsf{K}}\to\mathbb{B}_{\mathsf{K}}$ are both completely meet-preserving. Thus both of them have left adjoints, which are denoted by $\Diamondblack_{\mathsf{RO}}$ and $\Diamondblack_{\mathsf{K}}$, respectively. They will be used as the semantic interpretation of the additional connective in the expanded modal language in the next section. In what follows we will explicitly compute the definitions of the adjoints.

\begin{lemma}\label{alem:Diamond:Full}
$\Diamondblack_{\mathsf{K}}(X)=R[X]$ for any $X\subseteq W$.
\end{lemma}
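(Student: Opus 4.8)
The plan is to exploit the uniqueness of adjoints. Since $\Box_{\mathsf{K}}$ is completely meet-preserving (as noted just before the statement), its left adjoint $\Diamondblack_{\mathsf{K}}$ exists and is uniquely characterized by the adjunction law
$$\Diamondblack_{\mathsf{K}}(X)\subseteq Y\quad\Longleftrightarrow\quad X\subseteq\Box_{\mathsf{K}}(Y),\qquad\text{for all }X,Y\subseteq W.$$
Consequently it suffices to verify that the candidate map $X\mapsto R[X]$ satisfies exactly this law; once this is checked, the desired equality $\Diamondblack_{\mathsf{K}}(X)=R[X]$ follows at once by uniqueness, with no need to evaluate any infimum.

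First I would fix $X,Y\subseteq W$ and unfold the left-hand side. By the definition $R[X]=\{w\in W\mid(\exists x\in X)\,Rxw\}$, the inclusion $R[X]\subseteq Y$ asserts that every $R$-successor of every point of $X$ lies in $Y$; equivalently, for every $x\in X$ we have $R[x]\subseteq Y$. Next I would unfold the right-hand side: since $\Box_{\mathsf{K}}(Y)=\{w\in W\mid R[w]\subseteq Y\}$, the inclusion $X\subseteq\Box_{\mathsf{K}}(Y)$ says precisely that for every $x\in X$ we have $R[x]\subseteq Y$. The two unfolded conditions coincide verbatim, so the chain of equivalences closes and the adjunction law holds for $X\mapsto R[X]$.

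The proof is a routine quantifier manipulation, so there is no genuine obstacle. The only point requiring care is to keep the direction of the accessibility relation straight: $R[X]$ collects the successors of the points of $X$, whereas $\Box_{\mathsf{K}}$ is phrased through the successor set $R[w]$ of a single point, and one must recognize that both $R[X]\subseteq Y$ and $X\subseteq\Box_{\mathsf{K}}(Y)$ reduce to the same universally quantified statement $(\forall x\in X)(\forall v)(Rxv\Rightarrow v\in Y)$. As an alternative route one could instead compute $\Diamondblack_{\mathsf{K}}(X)=\bigcap\{Y\subseteq W\mid X\subseteq\Box_{\mathsf{K}}(Y)\}$ directly from the general formula for a left adjoint, but the adjunction-plus-uniqueness argument above is shorter and avoids explicitly evaluating the intersection.
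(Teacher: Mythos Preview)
Your proof is correct and uses the same key idea as the paper, namely identifying $\Diamondblack_{\mathsf{K}}$ via the adjunction law with $\Box_{\mathsf{K}}$. The only minor difference in execution is that the paper first verifies the adjunction on singletons to get $\Diamondblack_{\mathsf{K}}(\{w\})=R[w]$ and then extends to arbitrary $X$ using that left adjoints preserve joins, whereas you verify the adjunction law for arbitrary $X$ in one step and invoke uniqueness; your route is slightly more direct but the substance is the same.
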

\begin{proof}
By definition of adjunction, for any $Y\in P(W)$, $w\in W$,
\begin{center}
\begin{tabular}{r c l}
$\Diamondblack_{\mathsf{K}}(\{w\})\subseteq Y$ & iff & $\{w\}\subseteq\Box_{\mathsf{K}}(Y)$\\
& iff & $\{w\}\subseteq \{v\in W\mid R[v]\subseteq Y\}$\\
& iff & $R[w]\subseteq Y$,\\
\end{tabular}
\end{center}
Therefore $\Diamondblack_{\mathsf{K}}(\{w\})=R[w]$. Since left adjoints preserve existing joins,

\begin{center}
\begin{tabular}{r c l}
$\Diamondblack_{\mathsf{K}}(X)$ & = & $\Diamondblack_{\mathsf{K}}(\bigcup\{\{w\}\mid w\in X\})$\\
& = & $\bigcup\{\Diamondblack_{\mathsf{K}}(\{w\})\mid w\in X\}$\\
& = & $\bigcup\{R[w]\mid w\in X\}$\\
& = & $R[X]$.
\end{tabular}
\end{center}
\end{proof}

\begin{lemma}\label{alem:Diamond:RO}
$\Diamondblack_{\mathsf{RO}}(a)=(c\circ\Diamondblack_{\mathsf{K}}\circ e)(a)$.
\end{lemma}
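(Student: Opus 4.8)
The plan is to prove the identity by showing that the composite $c \circ \Diamondblack_{\mathsf{K}} \circ e$ satisfies the very universal property that characterizes $\Diamondblack_{\mathsf{RO}}$, and then to invoke uniqueness of adjoints. Since $\Box_{\mathsf{RO}}$ is a completely meet-preserving map on the complete BAO $\mathbb{B}_{\mathsf{RO}}$, its left adjoint $\Diamondblack_{\mathsf{RO}}$ is the unique map satisfying $\Diamondblack_{\mathsf{RO}}(a) \leq b$ iff $a \leq \Box_{\mathsf{RO}}(b)$ for all $a, b \in \mathbb{B}_{\mathsf{RO}}$. Hence it suffices to verify that $c \circ \Diamondblack_{\mathsf{K}} \circ e$ obeys this same biconditional.

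Concretely, I would fix $a, b \in \mathbb{B}_{\mathsf{RO}}$ and chain together the three adjunctions at hand. Starting from $(c \circ \Diamondblack_{\mathsf{K}} \circ e)(a) \leq b$, the adjunction $c \dashv e$ of Corollary \ref{acor:existence:left:adjoint} turns this into $\Diamondblack_{\mathsf{K}}(e(a)) \leq e(b)$; the adjunction $\Diamondblack_{\mathsf{K}} \dashv \Box_{\mathsf{K}}$ then turns it into $e(a) \leq \Box_{\mathsf{K}}(e(b))$. At this point I would apply the intertwining identity $\Box_{\mathsf{K}} \circ e = e \circ \Box_{\mathsf{RO}}$ from Lemma \ref{alemma:preserve:e} to rewrite the right-hand side as $e(\Box_{\mathsf{RO}}(b))$, and finally use that $e$ is an order-embedding to reflect $e(a) \leq e(\Box_{\mathsf{RO}}(b))$ back to $a \leq \Box_{\mathsf{RO}}(b)$. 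Read from end to end, this chain of equivalences yields exactly $(c \circ \Diamondblack_{\mathsf{K}} \circ e)(a) \leq b$ iff $a \leq \Box_{\mathsf{RO}}(b)$, so $c \circ \Diamondblack_{\mathsf{K}} \circ e$ is a left adjoint of $\Box_{\mathsf{RO}}$ and therefore coincides with $\Diamondblack_{\mathsf{RO}}$.

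There is no serious obstacle here beyond bookkeeping: the only points demanding care are tracking the correct orientation of each adjunction and applying Lemma \ref{alemma:preserve:e} in the form $\Box_{\mathsf{K}} \circ e = e \circ \Box_{\mathsf{RO}}$ rather than its (false) converse. It is worth isolating where each hypothesis is genuinely used — the adjunction $c \dashv e$, the adjunction $\Diamondblack_{\mathsf{K}} \dashv \Box_{\mathsf{K}}$, the intertwining $e \circ \Box_{\mathsf{RO}} = \Box_{\mathsf{K}} \circ e$, and the order-reflection of the embedding $e$ — since the lemma is precisely the statement that these four facts cohere. A more computational alternative would expand $c = \mathsf{ro}(-)$ together with $\Diamondblack_{\mathsf{K}}(X) = R[X]$ from Lemma \ref{alem:Diamond:Full} and verify directly that $\Diamondblack_{\mathsf{RO}}(a) = \mathsf{ro}(R[a])$, but the adjunction argument is cleaner and avoids unpacking the relational definitions.
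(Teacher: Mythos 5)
Your proposal is correct and is essentially the paper's own argument: the paper establishes the same chain of equivalences ($a \leq \Box_{\mathsf{RO}}(b)$ iff $e(a)\leq (\Box_{\mathsf{K}}\circ e)(b)$ iff $(\Diamondblack_{\mathsf{K}}\circ e)(a)\leq e(b)$ iff $(c\circ\Diamondblack_{\mathsf{K}}\circ e)(a)\leq b$), merely packaged as a rewriting of the set whose meet gives $\Diamondblack_{\mathsf{RO}}(a)=\bigwedge_{\mathsf{RO}}\{b\in\mathbb{B}_{\mathsf{RO}}\mid a\leq\Box_{\mathsf{RO}}(b)\}$ rather than as a direct verification of the Galois condition followed by uniqueness of adjoints. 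The ingredients you isolate --- the adjunctions $c\dashv e$ and $\Diamondblack_{\mathsf{K}}\dashv\Box_{\mathsf{K}}$, the intertwining identity of Lemma \ref{alemma:preserve:e}, and the order-reflection of $e$ --- are exactly those the paper uses.
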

\begin{proof}
We have the following chain of equalities:
\begin{center}
\begin{tabular}{r c l l}
$\Diamondblack_{\mathsf{RO}}(a)$ & = & $\bigwedge_{\mathsf{RO}}\{b\in\mathbb{B}_{\mathsf{RO}}\mid a\leq\Box_{\mathsf{RO}}(b)\}$ & (adjunction property)\\
& = &$\bigwedge_{\mathsf{RO}}\{b\in\mathbb{B}_{\mathsf{RO}}\mid e(a)\leq(\Box_{\mathsf{K}}\circ e)(b)\}$ & (Lemma \ref{alemma:preserve:e})\\
& = &$\bigwedge_{\mathsf{RO}}\{b\in\mathbb{B}_{\mathsf{RO}}\mid (\Diamondblack_{\mathsf{K}}\circ e)(a)\leq e(b)\}$ & (definition of adjunction)\\
& = &$\bigwedge_{\mathsf{RO}}\{b\in\mathbb{B}_{\mathsf{RO}}\mid (c\circ\Diamondblack_{\mathsf{K}}\circ e)(a)\leq b\}$ & (definition of adjunction)\\
& = &$(c\circ\Diamondblack_{\mathsf{K}}\circ e)(a)$.
\end{tabular}
\end{center}
\end{proof}
\begin{cor}
$\Diamondblack_{\mathsf{RO}}(X)=\mathsf{ro}(R[X])$ for any $X\in\mathsf{RO}(W,\tau_{\sqsubseteq})$.
\end{cor}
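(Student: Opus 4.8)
The plan is to combine the two lemmas immediately preceding this corollary. The statement $\Diamondblack_{\mathsf{RO}}(X)=\mathsf{ro}(R[X])$ is just the special case of Lemma \ref{alem:Diamond:RO} restricted to regular open arguments, once we unfold the maps $e$, $\Diamondblack_{\mathsf{K}}$, and $c$ on a set $X\in\mathsf{RO}(W,\tau_{\sqsubseteq})$. So the whole proof is a short chain of rewrites rather than a new construction.

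First I would invoke Lemma \ref{alem:Diamond:RO}, which gives $\Diamondblack_{\mathsf{RO}}(X)=(c\circ\Diamondblack_{\mathsf{K}}\circ e)(X)$ for any $X\in\mathbb{B}_{\mathsf{RO}}$, and in particular for $X\in\mathsf{RO}(W,\tau_{\sqsubseteq})$. Next I would evaluate the three maps from the inside out. Since $e$ is the inclusion of $\mathbb{B}_{\mathsf{RO}}$ into $\mathbb{B}_{\mathsf{K}}$ sending a regular open set to itself, we have $e(X)=X$. Then Lemma \ref{alem:Diamond:Full} gives $\Diamondblack_{\mathsf{K}}(X)=R[X]$. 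Finally, the discussion following Corollary \ref{acor:existence:left:adjoint} established that $c(Y)=\mathsf{ro}(Y)$ for any $Y\subseteq W$, so applying $c$ to $R[X]$ yields $\mathsf{ro}(R[X])$. Chaining these gives $\Diamondblack_{\mathsf{RO}}(X)=c(R[X])=\mathsf{ro}(R[X])$, as required.

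There is no real obstacle here: each of the three ingredients is already proved in the excerpt, and the only content is recognizing that the hypothesis $X\in\mathsf{RO}(W,\tau_{\sqsubseteq})$ is exactly what licenses $e(X)=X$. The one point worth stating carefully is that $c=\mathsf{ro}$ holds on \emph{all} subsets of $W$ (not merely regular open ones), so applying it to the possibly-non-regular-open set $R[X]$ is legitimate; this is precisely the identity $c(X)=\mathsf{ro}(X)$ recorded right after Corollary \ref{acor:existence:left:adjoint}. I would therefore present the argument as a two- or three-step equality with each step annotated by its justifying lemma, in the same tabular display style used in the proofs of Lemmas \ref{alem:Diamond:Full} and \ref{alem:Diamond:RO}.

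\begin{proof}
For any $X\in\mathsf{RO}(W,\tau_{\sqsubseteq})$ we have the following chain of equalities:
\begin{center}
\begin{tabular}{r c l l}
$\Diamondblack_{\mathsf{RO}}(X)$ & = & $(c\circ\Diamondblack_{\mathsf{K}}\circ e)(X)$ & (Lemma \ref{alem:Diamond:RO})\\
& = & $(c\circ\Diamondblack_{\mathsf{K}})(X)$ & ($e(X)=X$ since $X\in\mathsf{RO}(W,\tau_{\sqsubseteq})$)\\
& = & $c(R[X])$ & (Lemma \ref{alem:Diamond:Full})\\
& = & $\mathsf{ro}(R[X])$ & ($c=\mathsf{ro}$ on all subsets of $W$).\\
\end{tabular}
\end{center}
\end{proof}
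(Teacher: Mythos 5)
Your proof is correct and follows exactly the same chain of equalities as the paper's own proof: apply Lemma \ref{alem:Diamond:RO}, drop $e$ since $X$ is already regular open, apply Lemma \ref{alem:Diamond:Full}, and identify $c$ with $\mathsf{ro}$. Your added remarks about why $e(X)=X$ and why $c=\mathsf{ro}$ applies to the possibly non-regular-open set $R[X]$ are accurate and slightly more explicit than the paper's version.
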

\begin{proof}
We have the following chain of equalities:
\begin{center}
\begin{tabular}{r c l l}
$\Diamondblack_{\mathsf{RO}}(X)$ & = &$(c\circ\Diamondblack_{\mathsf{K}}\circ e)(X)$ & (Lemma \ref{alem:Diamond:RO})\\
& = &$(c\circ\Diamondblack_{\mathsf{K}})(X)$ &\\
& = &$c(R[X])$ & (Lemma \ref{alem:Diamond:Full})\\
& = &$\mathsf{ro}(R[X])$. & (Corollary \ref{acor:existence:left:adjoint})
\end{tabular}
\end{center}
\end{proof}

\section{Preliminaries on algorithmic correspondence}\label{aSec:expanded:language}

Our algorithmic correspondence and canonicity proofs are based on duality and order-theoretic insights, and distill the order-theoretic properties from the concrete semantic setting. We will explain how we work for the correspondence over full possibility frames. This methodology works in a similar way in the setting of filter-descriptive possibility frames.

As we have already seen from Section \ref{aSec:Algebraic:Semantics}, we have the dual equivalence between the full possibility frame $\mathbb{F}_{\mathsf{RO}}$ and the regular open dual BAO $\mathbb{B}_{\mathsf{RO}}$. Now we can understand the sketch of algorithmic correspondence as follows (here we abuse notation to mix syntactic symbols and semantic objects, for the convenience of understanding):
\begin{center}
\begin{tabular}{l c l}\label{atable:U:shape}
$\mathbb{F}_{\mathsf{RO}}\vDash\varphi(\vec p)$ & &$\mathbb{F}_{\mathsf{RO}}\vDash$FO(Pure$(\varphi(\vec p))$)\\
\\
\ \ \ \ \ \ $\Updownarrow$ & &\ \ \ \ \ \ $\Updownarrow$\\
\\
$\mathbb{B}_{\mathsf{RO}}\vDash(\forall \vec{p}\in \mathbb{B}_{\mathsf{RO}})(\phi(\vec p))$ &\ \ \ $\Leftrightarrow$ \ \ \ &$\mathbb{B}_{\mathsf{RO}}\vDash(\forall\vec{\mathbf{i}}\in \mathsf{PsAt}(\mathbb{B}_{\mathsf{RO}}))$Pure$(\phi(\vec p))$
\end{tabular}
\end{center}

For the left arm of the ``U-shaped'' argument given above, the validity of $\phi(\vec p)$ is understood as $(\forall \vec p\in \mathbb{B}_{\mathsf{RO}})(\phi(\vec p))$, where the interpretations of propositional variables range over elements in $\mathbb{B}_{\mathsf{RO}}$. The algorithm transforms $\phi(\vec p)$ into an equivalent set of pure quasi-inequalities Pure$(\phi(\vec p))$ which does not contain propositional variables, but only nominals\footnote{In lattice-based logic settings, there is another kind of variables called co-nominals (see e.g.\ \cite{CoPa12}), which are interpreted as co-atoms or complete meet-irreducibles. Since in the Boolean setting, co-nominals can be interpreted as the negation of nominals, they are not really necessary here. In the remainder of the paper, we will not use co-nominals.}, and the validity of Pure$(\phi(\vec p))$ can be understood as $(\forall \vec{\mathbf{i}}\in \mathsf{PsAt}(\mathbb{B}_{\mathsf{RO}}))$(Pure$(\phi(\vec p))$), where $\mathsf{PsAt}(\mathbb{B}_{\mathsf{RO}})$ is the collection of regular open closures of atoms in $\mathbb{B}_{\mathsf{K}}$ rather than atoms in $\mathbb{B}_{\mathsf{RO}}$, which serves as the set of join-generators in $\mathbb{B}_{\mathsf{RO}}$. This interpretation is the key to the soundness of the approximation rules, in the spirit of in \cite[Lemma 3.7]{Ya16}. Once the nominals and the other symbols in Pure$(\phi(\vec p))$ have first-order translations, we have the first-order translation FO(Pure$(\phi(\vec p))$) of the pure quasi-inequalities, which is the first-order correspondent of $\phi(\vec p)$ over full possibility frames.

Therefore, the ingredients for the algorithmic correspondence proof to go through can be listed as follows:
\begin{itemize}
\item An expanded modal language as the syntax of the algorithm, as well as their interpretations in $\mathbb{B}_{\mathsf{RO}}$;
\item An algorithm which transforms a given modal formula $\phi(\vec p)$ into equivalent pure quasi-inequalities Pure$(\phi(\vec p))$;
\item A soundness proof of the algorithm with respect to $\mathbb{B}_{\mathsf{RO}}$;
\item A syntactically identified class of formulas on which the algorithm is successful;
\item A first-order correspondence language and first-order translation which transforms pure quasi-inequalities into their equivalent first-order correspondents.
\end{itemize}

In the remainder of the paper, we will define an expanded modal language which the algorithm will manipulate (Section \ref{asec:expanded:modal:language}), define the first-order correspondence language of possibility frames (Section \ref{asec:correspondence:language}) and the counterpart of the standard translation into this language, which we refer to as {\em regular open translation} (Section \ref{asec:regular:open:translation}). We report on the definition of Sahlqvist and inductive formulas (Section \ref{asec:Sahlqvist}), and define a modified version of the algorithm ALBA suitable for the possibility semantic environment (Section \ref{aSec:ALBA}), show its soundness over regular open dual BAOs (Section \ref{aSec:soundness}) and its success (Section \ref{aSec:success}) on Sahlqvist and inductive formulas. In Section \ref{aSec:Canonicity} we show the soundness of the algorithm over the dual BAOs of filter-descriptive possibility frames.

\subsection{The expanded modal language and the regular open translation}

In the present section, we will define the expanded modal language for the algorithm, the first-order and second-order correspondence language, as well as the regular open translation of the expanded modal language into the correspondence language. We will also show that the translation preserves truth conditions.

\subsubsection{The expanded modal language $\mathcal{L}^{+}$}\label{asec:expanded:modal:language}

The expanded modal language $\mathcal{L}^{+}$ is a proper expansion of the modal language. Apart from the propositional variables and connectives in the modal language, there are also a set $\mathsf{Nom}$ of \emph{nominals}, a special kind of variables to be interpreted as elements in $\mathsf{PsAt}(\mathbb{B}_{\mathsf{RO}})$, and the \emph{black connectives} $\Diamondblack,\blacksquare$, i.e.\ the unary connectives to be interpreted as the adjoints of $\Box$ and $\Diamond$ respectively. The formal definition of the formulas in the expanded modal language $\mathcal{L}^{+}$ is given as follows:
$$\varphi::=p \mid \nomi \mid \neg\varphi \mid \varphi\land\varphi \mid \Box\varphi \mid \Diamondblack\varphi,$$
where $p\in\mathsf{Prop}$ and $\nomi\in\mathsf{Nom}$. We also define $\blacksquare\varphi:=\neg\Diamondblack\neg\varphi$, and the other abbreviations are defined similar to the basic modal language. In the algorithm, it will be convenient to use the abbreviations as primitive symbols in the definition of the rules.

In order to interpret the expanded modal language on possibility frames and the dual BAOs, we need to extend the valuation $V$ and assignment $\theta$ also to nominals. As we have already seen in Section \ref{aSec:algebraic:analysis}, every element in $\mathbb{B}_{\mathsf{RO}}$ can be represented as the join of elements in $\mathsf{PsAt}(\mathbb{B}_{\mathsf{RO}})\subseteq\mathbb{B}_{\mathsf{RO}}=\mathsf{RO}(W,\tau_{\sqsubseteq})$. Therefore, we are going to interpret the nominals as elements in $\mathsf{PsAt}(\mathbb{B}_{\mathsf{RO}})$, i.e.\ $V(\nomi), \theta(\nomi)\in\mathsf{PsAt}(\mathbb{B}_{\mathsf{RO}})\subseteq\mathsf{RO}(W,\tau_{\sqsubseteq})$.

The satisfaction relation for the additional symbols will be given as follows in any possibility frame $\mathbb{F}=(W,\sqsubseteq,R,\mathsf{P})$:
\begin{definition}
\begin{itemize}
\item $\mathbb{F}, V, w\vDash \nomi$ iff $w\in V(\nomi)$;
\item $\mathbb{F}, V, w\vDash \Diamondblack\varphi$ iff $(\forall v\sqsubseteq w)(\exists u\sqsubseteq v)(\exists t\sqsupseteq u)\exists s(Rst$ and $\mathbb{F}, V, s\vDash \varphi)$.
\end{itemize}
\end{definition}

Notice that $V(\nomi)$ is not necessarily in $\mathsf{P}$, but it is always in $\mathsf{RO}(W,\tau_{\sqsubseteq})$. Similarly, $\mathsf{P}$ is not necessarily closed under $\Diamondblack_{\mathsf{RO}}$, but $\mathsf{RO}(W,\tau_{\sqsubseteq})$ is. Therefore, when interpreting formulas in the expanded modal language, we only restrict the interpretations of propositional variables to $\mathsf{P}$ (therefore also all formulas in the basic modal language), and allow formulas in the expanded modal language to be interpreted in $\mathsf{RO}(W, \tau_{\sqsubseteq})$. Truth set and validity are defined similarly to the basic modal language.

It is easy to check that the following facts hold for the expanded modal language:

\begin{prop}\label{aprop:semantic:condition:black}
\begin{itemize}
\item $\mathbb{F}, V, w\vDash \Diamondblack\varphi$ iff $w\in \mathsf{ro}(R[\llbracket\phi\rrbracket^{\mathbb{F}, V}])$;
\item $\mathbb{F}, V, w\vDash\blacksquare\varphi$ iff $(\forall v\sqsubseteq w)(\forall u\sqsupseteq v)\forall t(Rtu\Rightarrow(\exists s\sqsubseteq t)(\mathbb{F}, V, s\vDash \varphi))$;
\item $\llbracket\Diamondblack\phi\rrbracket^{\mathbb{F}, V}=\Diamondblack_{\mathsf{RO}}\llbracket\phi\rrbracket^{\mathbb{F}, V}$;
\item $\llbracket\blacksquare\phi\rrbracket^{\mathbb{F}, V}=(-_{\mathsf{RO}}\circ\Diamondblack_{\mathsf{RO}}\circ -_{\mathsf{RO}})(\llbracket\phi\rrbracket^{\mathbb{F}, V})$.
\end{itemize}

\end{prop}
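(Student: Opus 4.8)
The plan is to verify each of the four items in Proposition \ref{aprop:semantic:condition:black} by reducing them to results already established, rather than unwinding the frame-level satisfaction clauses from scratch. The four claims split naturally into two pairs: the two ``$\llbracket\cdot\rrbracket$'' items are purely algebraic and follow from the adjunction computations of Section \ref{aSec:algebraic:analysis}, whereas the two frame-level satisfaction conditions for $\Diamondblack\varphi$ and $\blacksquare\varphi$ require translating between the pointwise quantifier prefix and the operators $\mathsf{ro}$, $R[\cdot]$.

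First I would dispatch the third item, $\llbracket\Diamondblack\phi\rrbracket^{\mathbb{F},V}=\Diamondblack_{\mathsf{RO}}\llbracket\phi\rrbracket^{\mathbb{F},V}$. Since $\Diamondblack$ is by design interpreted as the left adjoint $\Diamondblack_{\mathsf{RO}}$ of $\Box_{\mathsf{RO}}$, this is essentially the definitional content of the interpretation; combined with the Corollary following Lemma \ref{alem:Diamond:RO}, which gives $\Diamondblack_{\mathsf{RO}}(X)=\mathsf{ro}(R[X])$ for regular open $X$, this immediately yields the first item as well: $\mathbb{F},V,w\vDash\Diamondblack\varphi$ iff $w\in\Diamondblack_{\mathsf{RO}}\llbracket\varphi\rrbracket^{\mathbb{F},V}=\mathsf{ro}(R[\llbracket\varphi\rrbracket^{\mathbb{F},V}])$. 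Thus the first and third items are two faces of the same fact, one stated pointwise and one stated as truth sets. For the fourth item, I would use $\blacksquare\varphi:=\neg\Diamondblack\neg\varphi$ together with the already-established clause $\llbracket\neg\varphi\rrbracket^{\mathbb{F},V}=\mathsf{int}(W\setminus\llbracket\varphi\rrbracket^{\mathbb{F},V})=-_{\mathsf{RO}}\llbracket\varphi\rrbracket^{\mathbb{F},V}$, so that $\llbracket\blacksquare\varphi\rrbracket^{\mathbb{F},V}=-_{\mathsf{RO}}\Diamondblack_{\mathsf{RO}}(-_{\mathsf{RO}}\llbracket\varphi\rrbracket^{\mathbb{F},V})$, which is exactly $(-_{\mathsf{RO}}\circ\Diamondblack_{\mathsf{RO}}\circ-_{\mathsf{RO}})(\llbracket\varphi\rrbracket^{\mathbb{F},V})$.

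The only genuinely laborious step is reconciling the nested-quantifier satisfaction clause for $\Diamondblack\varphi$ in the Definition preceding the proposition, namely $(\forall v\sqsubseteq w)(\exists u\sqsubseteq v)(\exists t\sqsupseteq u)\exists s(Rst\text{ and }\ldots)$, with the compact statement $w\in\mathsf{ro}(R[\llbracket\varphi\rrbracket^{\mathbb{F},V}])$. Here I would expand $\mathsf{ro}(X)=\mathsf{int}(\mathsf{cl}(\Downarrow X))$ from Proposition \ref{afacts:regular:open}(2) and read off the membership condition $w\in\mathsf{int}(\mathsf{cl}(\Downarrow R[\llbracket\varphi\rrbracket^{\mathbb{F},V}]))$: unfolding $\mathsf{int}$ gives the outer $\forall v\sqsubseteq w$, unfolding $\mathsf{cl}$ gives the $\exists u\sqsubseteq v$, and unfolding $\Downarrow$ together with the definition of $R[\cdot]$ gives $\exists t\sqsupseteq u$ with $\exists s\,(Rst$ and $s\in\llbracket\varphi\rrbracket)$. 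The frame clause for $\blacksquare$ is then obtained by the same dualization as above, contraposing each quantifier through the $\neg\Diamondblack\neg$ definition and using the $\neg$-clause's $\forall v\sqsubseteq w$ prefix. The main obstacle, such as it is, lies in carefully matching the alternating $\sqsubseteq$ and $\sqsupseteq$ steps in these quantifier prefixes against the composition of $\mathsf{int}$, $\mathsf{cl}$, and $\Downarrow$ without sign or direction errors; this is bookkeeping rather than conceptual difficulty, since all the semantic heavy lifting has already been carried out in Lemmas \ref{alem:Diamond:Full} and \ref{alem:Diamond:RO} and their corollary.
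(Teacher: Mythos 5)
The paper offers no proof of this proposition --- it is introduced with ``It is easy to check that the following facts hold'' --- so there is no official argument to match yours against; judged on its own terms, your plan is correct and is the natural way to fill the gap. Two remarks. First, your presentation of items (1) and (3) is slightly circular in order: at the frame level $\Diamondblack$ is \emph{defined} by the quantifier clause, not as the adjoint, so the identity $\llbracket\Diamondblack\phi\rrbracket^{\mathbb{F},V}=\Diamondblack_{\mathsf{RO}}\llbracket\phi\rrbracket^{\mathbb{F},V}$ is not ``definitional content'' but is exactly what the unfolding of $\mathsf{ro}(R[\cdot])=\mathsf{int}(\mathsf{cl}({\Downarrow}R[\cdot]))$ against the prefix $(\forall v\sqsubseteq w)(\exists u\sqsubseteq v)(\exists t\sqsupseteq u)\exists s(Rst\wedge\cdots)$ establishes. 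Since you do carry out that unfolding later, the proof is fine; only the logical order should be reversed, with item (1) proved first and item (3) read off as its truth-set formulation.

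Second, and more substantively, for item (2) the ``bookkeeping'' hides a small genuine step. Contraposing the $\Diamondblack$ clause through $\blacksquare\varphi:=\neg\Diamondblack\neg\varphi$ and the $\neg$-clause yields the prefix $(\forall v\sqsubseteq w)(\exists v'\sqsubseteq v)(\forall u\sqsubseteq v')(\forall t\sqsupseteq u)\forall s(Rst\Rightarrow\cdots)$, which is \emph{not} syntactically the stated $(\forall v\sqsubseteq w)(\forall u\sqsupseteq v)\forall t(Rtu\Rightarrow\cdots)$. To collapse the former to the latter one must observe that, writing $Y=-_{\mathsf{RO}}\llbracket\varphi\rrbracket^{\mathbb{F},V}$, the set $\mathsf{int}(W\setminus{\Downarrow}R[Y])$ coincides with $\mathsf{int}(W\setminus\mathsf{ro}(R[Y]))=-_{\mathsf{RO}}\Diamondblack_{\mathsf{RO}}(Y)$ and is therefore already regular open, so that applying $\mathsf{int}\circ\mathsf{cl}$ to it (which is where the extra $(\exists v'\sqsubseteq v)(\forall u\sqsubseteq v')$ comes from) changes nothing. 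This one-line lemma --- $\mathsf{int}(W\setminus{\Downarrow}Z)=\mathsf{int}(W\setminus\mathsf{ro}(Z))$ for any $Z$ --- should be stated explicitly; without it the contraposition does not land on the clause as printed.
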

The next proposition shows that $\blacksquare$ is interpreted as the right adjoint of $\Diamond$:
\begin{prop}
For any $X, Y\in\mathsf{RO}(W,\tau_{\sqsubseteq})$, $$(-_{\mathsf{RO}}\circ\Box_{\mathsf{RO}}\circ -_{\mathsf{RO}})(X)\subseteq Y\mbox{ iff }X\subseteq (-_{\mathsf{RO}}\circ\Diamondblack_{\mathsf{RO}}\circ -_{\mathsf{RO}})(Y).$$
\end{prop}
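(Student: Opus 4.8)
The plan is to read the claim as an adjunction statement: writing $\Diamond_{\mathsf{RO}} := {-_{\mathsf{RO}}}\circ\Box_{\mathsf{RO}}\circ{-_{\mathsf{RO}}}$ and $\blacksquare_{\mathsf{RO}} := {-_{\mathsf{RO}}}\circ\Diamondblack_{\mathsf{RO}}\circ{-_{\mathsf{RO}}}$, the displayed biconditional asserts exactly that $\Diamond_{\mathsf{RO}}$ is the left adjoint of $\blacksquare_{\mathsf{RO}}$ on $\mathbb{B}_{\mathsf{RO}}$. I would derive this from the adjunction already in hand, namely $\Diamondblack_{\mathsf{RO}}\dashv\Box_{\mathsf{RO}}$, which holds because $\Box_{\mathsf{RO}}$ is completely meet-preserving on the complete lattice $\mathbb{B}_{\mathsf{RO}}$ and so admits $\Diamondblack_{\mathsf{RO}}$ as its left adjoint (as recorded just above the statement). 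Concretely, the adjunction reads: for all $a,b\in\mathbb{B}_{\mathsf{RO}}$, $\Diamondblack_{\mathsf{RO}}(a)\subseteq b$ iff $a\subseteq\Box_{\mathsf{RO}}(b)$.

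The only additional ingredient is that $-_{\mathsf{RO}}$ is an order-reversing involution on $\mathbb{B}_{\mathsf{RO}}$. This is immediate from Proposition \ref{afacts:regular:open}(9), which says that $(\mathsf{RO}(W,\tau_{\sqsubseteq}),\emptyset,W,\land,\lor,-)$ is a Boolean algebra with $-_{\mathsf{RO}}$ as its complementation; hence for $a,b\in\mathbb{B}_{\mathsf{RO}}$ we have $a\subseteq b$ iff $-_{\mathsf{RO}}b\subseteq{-_{\mathsf{RO}}}a$, together with $-_{\mathsf{RO}}(-_{\mathsf{RO}}a)=a$.

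First I would rewrite the left-hand inequality $(-_{\mathsf{RO}}\circ\Box_{\mathsf{RO}}\circ{-_{\mathsf{RO}}})(X)\subseteq Y$ by applying $-_{\mathsf{RO}}$ to both sides (reversing the order) and cancelling a double negation, obtaining the equivalent $-_{\mathsf{RO}}Y\subseteq\Box_{\mathsf{RO}}(-_{\mathsf{RO}}X)$. Next I would invoke the adjunction $\Diamondblack_{\mathsf{RO}}\dashv\Box_{\mathsf{RO}}$ with $a:=-_{\mathsf{RO}}Y$ and $b:=-_{\mathsf{RO}}X$, turning this into $\Diamondblack_{\mathsf{RO}}(-_{\mathsf{RO}}Y)\subseteq{-_{\mathsf{RO}}}X$. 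Finally I would apply $-_{\mathsf{RO}}$ once more (again reversing the order and cancelling a double negation), arriving at $X\subseteq{-_{\mathsf{RO}}}(\Diamondblack_{\mathsf{RO}}(-_{\mathsf{RO}}Y))=(-_{\mathsf{RO}}\circ\Diamondblack_{\mathsf{RO}}\circ{-_{\mathsf{RO}}})(Y)$, which is the right-hand side. Since each of the three steps is a biconditional, chaining them yields the claimed equivalence.

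I do not expect any genuine obstacle here: the argument is a three-step formal manipulation and appeals neither to the concrete frame semantics nor to the join-generation results of the previous section. The only point requiring care is the bookkeeping of the order reversals that each application of $-_{\mathsf{RO}}$ introduces, and making sure the involution law $-_{\mathsf{RO}}\circ{-_{\mathsf{RO}}}=\mathrm{Id}$ is used to cancel the double negations at the two ends. In fact the same computation shows, purely order-theoretically, that conjugating any adjoint pair on a Boolean algebra by its complementation produces a new adjoint pair with the roles interchanged.
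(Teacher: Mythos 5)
Your argument is correct: the paper states this proposition without proof, and the conjugation-by-complementation computation you give (rewrite via the order-reversing involution $-_{\mathsf{RO}}$, apply the adjunction $\Diamondblack_{\mathsf{RO}}\dashv\Box_{\mathsf{RO}}$ established just before the statement, then conjugate back) is exactly the routine verification the paper leaves implicit. Each step is a genuine biconditional and uses only that $\mathbb{B}_{\mathsf{RO}}$ is a Boolean algebra whose order is $\subseteq$, so nothing further is needed.
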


For the algebraic semantics of the expanded modal language, we use a kind of hybrid algebraic structures obtained from arbitrary possibility frames: consider a possibility frame $\mathbb{F}=(W,\sqsubseteq,R,\mathsf{P})$ (whose underlying full possibility frame is well-defined) and its underlying full possibility frame $\mathbb{F}=(W,\sqsubseteq,R,\mathsf{RO}(W,\tau_{\sqsubseteq}))$, we dualize the latter to obtain the regular open dual BAO $\mathbb{B}_{\mathsf{RO}}=(\mathsf{RO}(W,\tau_{\sqsubseteq}), \emptyset, W, \wedge_{\mathsf{RO}}, \vee_{\mathsf{RO}}, -_{\mathsf{RO}}, \Box_{\mathsf{RO}})$, and put an admissible set $\mathsf{P}$ on top of it and get a \emph{hybrid dual BAO} $(\mathbb{B}_{\mathsf{RO}}, \mathsf{P})$, which restricts the assignment of propositional variables to $\mathsf{P}$, but still allows formulas in the expanded modal language to range over $\mathsf{RO}(W,\tau_{\sqsubseteq})$. For the interpretation of the expanded modal language, we require that $\theta(\nomi)\in\mathsf{PsAt}(\mathbb{B}_{\mathsf{RO}})$ and $\Diamondblack$ is interpreted as $\Diamondblack_{\mathsf{RO}}:\mathbb{B}_{\mathsf{RO}}\to\mathbb{B}_{\mathsf{RO}}$. For the definition of validity, we use the notation $\mathbb{B}_{\mathsf{RO}}\vDash_{\mathsf{P}}\varphi$ to indicate that the assignments of propositional variables range over $\mathsf{P}$ (while the assignments of nominals range over $\mathsf{PsAt}(\mathbb{B}_{\mathsf{RO}})$). 

It is easy to check that Proposition \ref{aprop:equiv:duality} generalizes to the expanded modal language in the case of full possibility frames. For the case of arbitrary possibility frames, we use the hybrid dual BAO $(\mathbb{B}_{\mathsf{RO}}, \mathsf{P})$ and the adapted version of validity $\mathbb{B}_{\mathsf{RO}}\vDash_{\mathsf{P}}\varphi$. The discussion above can be summarized as follows:

\begin{prop}\label{aprop:equiv:duality:expanded}
For any formula $\phi$ in the expanded language, 

\begin{itemize}
\item For any full possibility frame $\mathbb{F}_{\mathsf{RO}}$ and its dual BAO $\mathbb{B}_{\mathsf{RO}}$,
\begin{itemize}
\item $\mathbb{F}_{\mathsf{RO}}\vDash\varphi$ iff $\mathbb{B}_{\mathsf{RO}}\vDash\varphi$;
\item $\mathbb{F}_{\mathsf{RO}}\vDash\phi\leq\psi$ iff $\mathbb{B}_{\mathsf{RO}}\vDash\phi\leq\psi$;
\item $\mathbb{F}_{\mathsf{RO}}\vDash\bigamp_{i=1}^{n}(\phi_i\leq\psi_i)\Rightarrow\varphi\leq\psi$ iff $\mathbb{B}_{\mathsf{RO}}\vDash\bigamp_{i=1}^{n}(\phi_i\leq\psi_i)\Rightarrow\varphi\leq\psi$.
\end{itemize}
\item For any possibility frame $\mathbb{F}_{\mathsf{P}}$ and its hybrid dual BAO $(\mathbb{B}_{\mathsf{RO}}, \mathsf{P})$, 
\begin{itemize}
\item $\mathbb{F}_{\mathsf{P}}\vDash\varphi$ iff $\mathbb{B}_{\mathsf{RO}}\vDash_{\mathsf{P}}\varphi$;
\item $\mathbb{F}_{\mathsf{P}}\vDash\phi\leq\psi$ iff $\mathbb{B}_{\mathsf{RO}}\vDash_{\mathsf{P}}\phi\leq\psi$;
\item $\mathbb{F}_{\mathsf{P}}\vDash\bigamp_{i=1}^{n}(\phi_i\leq\psi_i)\Rightarrow\varphi\leq\psi$ iff $\mathbb{B}_{\mathsf{RO}}\vDash_{\mathsf{P}}\bigamp_{i=1}^{n}(\phi_i\leq\psi_i)\Rightarrow\varphi\leq\psi$.
\end{itemize}
\end{itemize}
\end{prop}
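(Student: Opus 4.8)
The plan is to reduce Proposition \ref{aprop:equiv:duality:expanded} to the already-established Proposition \ref{aprop:equiv:duality} by tracking how the duality between frames and their dual BAOs extends to the additional symbols of the expanded language, namely the nominals and the black connective $\Diamondblack$. The key observation is that everything on the basic-modal-language fragment has already been handled, so the only genuinely new content is verifying that the interpretation of $\nomi$ and $\Diamondblack$ on the frame side matches their algebraic interpretation on $\mathbb{B}_{\mathsf{RO}}$ under the valuation-assignment correspondence $V\leftrightarrow\theta_V$.

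First I would treat the full possibility frame case. The core step is to show that for any full possibility frame $\mathbb{F}_{\mathsf{RO}}$, any valuation $V$ extended to nominals, and any $w\in W$, we have $\mathbb{F}_{\mathsf{RO}}, V, w\vDash\varphi$ iff $w\in\theta_V(\varphi)$, for every $\varphi\in\mathcal{L}^+$. This is proved by a routine induction on the complexity of $\varphi$. The base cases for propositional variables and the inductive cases for $\neg,\land,\Box$ are exactly as in the basic language, so they are inherited from Proposition \ref{aprop:equiv:duality}. The new base case $\varphi=\nomi$ holds by definition, since $\mathbb{F}, V, w\vDash\nomi$ iff $w\in V(\nomi)=\theta_V(\nomi)$, and we have required $\theta_V(\nomi)\in\mathsf{PsAt}(\mathbb{B}_{\mathsf{RO}})$. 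The new inductive case $\varphi=\Diamondblack\psi$ is settled by Proposition \ref{aprop:semantic:condition:black}: by the first item there, $\mathbb{F}, V, w\vDash\Diamondblack\psi$ iff $w\in\mathsf{ro}(R[\llbracket\psi\rrbracket^{\mathbb{F}, V}])$, and by the third item $\llbracket\Diamondblack\psi\rrbracket^{\mathbb{F}, V}=\Diamondblack_{\mathsf{RO}}\llbracket\psi\rrbracket^{\mathbb{F}, V}$; combining these with the induction hypothesis $\llbracket\psi\rrbracket^{\mathbb{F}, V}=\theta_V(\psi)$ and the fact that $\Diamondblack$ is interpreted as $\Diamondblack_{\mathsf{RO}}$ yields $w\in\theta_V(\Diamondblack\psi)$. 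Once the pointwise equivalence is in hand, the three bulleted equivalences for formulas, inequalities, and quasi-inequalities follow exactly as in Proposition \ref{aprop:equiv:duality}, by quantifying over all points and all valuations and using the bijection between valuations and assignments.

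Next I would handle the arbitrary possibility frame case, which is where the hybrid dual BAO $(\mathbb{B}_{\mathsf{RO}}, \mathsf{P})$ and the adapted validity $\vDash_{\mathsf{P}}$ come in. The pointwise induction is identical to the one above, since the satisfaction clauses for the expanded language do not depend on $\mathsf{P}$ and the black connective is always interpreted in the full $\mathsf{RO}(W,\tau_{\sqsubseteq})$. The only difference lies in the \emph{scope of the quantification over valuations}: in the definition of $\vDash_{\mathsf{P}}$, the assignments of propositional variables are constrained to range over $\mathsf{P}$, while nominals still range over $\mathsf{PsAt}(\mathbb{B}_{\mathsf{RO}})$. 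This matches precisely the constraint on frame valuations, namely $V(p)\in\mathsf{P}$ and $V(\nomi)\in\mathsf{PsAt}(\mathbb{B}_{\mathsf{RO}})$. Hence the bijection $V\leftrightarrow\theta_V$ restricts to a bijection between the admissible frame valuations and the admissible hybrid assignments, and the three equivalences transfer verbatim.

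The main obstacle, such as it is, is conceptual rather than technical: one must be careful that the nominals and the black connectives are interpreted in $\mathsf{RO}(W,\tau_{\sqsubseteq})$ rather than in $\mathsf{P}$, since $\mathsf{P}$ need not contain $V(\nomi)$ nor be closed under $\Diamondblack_{\mathsf{RO}}$, as the surrounding text explicitly warns. The whole point of the hybrid structure $(\mathbb{B}_{\mathsf{RO}}, \mathsf{P})$ is to accommodate exactly this asymmetry, so the proof amounts to checking that the semantic clauses and the algebraic interpretations were set up consistently. Given Propositions \ref{aprop:equiv:duality} and \ref{aprop:semantic:condition:black}, the argument is essentially a bookkeeping extension of an already-verified induction, and I would present it as such.
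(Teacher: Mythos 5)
Your proposal is correct and follows exactly the route the paper intends: the paper leaves this proposition as an unproved summary of the preceding discussion ("It is easy to check that Proposition \ref{aprop:equiv:duality} generalizes to the expanded modal language..."), and your pointwise induction extending Proposition \ref{aprop:equiv:duality} with the nominal base case and the $\Diamondblack$ case via Proposition \ref{aprop:semantic:condition:black}, plus the restriction of the valuation--assignment bijection for the hybrid case, is precisely the intended justification.
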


\subsubsection{The correspondence languages}\label{asec:correspondence:language}

In order to express the first-order correspondents of modal formulas, we need to define the first-order and second-order correspondence language $\mathcal{L}^1$ and $\mathcal{L}^2$. The first-order correspondence language $\mathcal{L}^{1}$ consists of a set of unary predicate symbols $P_n$, each of which corresponds to a propositional variable $p_n$, two binary relation symbols $\sqsubseteq$ and $R$ corresponding to the refinement relation and the accessibility relation respectively, a set of individual symbols $i_n$, each of which corresponds to a nominal $\nomi_n$, and the quantifiers $\forall x, \exists x$ are first-order, i.e.\ ranging over individual variables. The second-order correspondence language $\mathcal{L}^{2}$ contains all the symbols from $\mathcal{L}^{1}$ as well as second-order quantifiers $\forall^{\mathsf{P}}P, \exists^{\mathsf{P}}P$ over unary predicate variables. In addition, unary predicate symbols are interpreted as admissible subsets, and the second-order quantifiers range over admissible subsets.

The semantic structures to interpret the first-order and second-order formulas are the possibility models $(\mathbb{F}, V)=(W, \sqsubseteq, R, \mathsf{P}, V)$, where an individual symbol $i_n$ is interpreted as a state $\underline{i_n}\in W$ such that $\mathsf{ro}(\{\underline{i_n}\})=V(\nomi_n)$, a unary predicate symbols $P_n$ is interpreted as $V(p_n)\in\mathsf{P}$, and the binary relation symbols $\sqsubseteq$ and $R$ are interpreted as the refinement relation and the accessibility relation denoted by the same symbol, respectively. At the level of possibility frames, we will abuse notation to take the unary predicate symbols $P_n$ and the individual symbols $i_n$ as variables and use quantifiers over them. We use $\llbracket\alpha(\vec x)\rrbracket^{\mathbb{F}, V}$ to denote the $n$-tuples $\vec w\in W^{n}$ that make $\alpha(\vec x)$ true in the model $(\mathbb{F}, V)$, i.e.\ $\llbracket\alpha(\vec x)\rrbracket^{\mathbb{F}, V}:=\{{\vec w\in W^{n}}\mid\mathbb{F}, V\vDash\alpha[\vec w]\}$, which is called the \emph{truth set} of $\alpha(\vec x)$ in $(\mathbb{F}, V)$.

In the definition of correspondence between a modal formula and a first-order formula, we will require the first-order formula to contain only binary relation symbols, and not contain any unary predicate symbol. Now the definition can be given as follows in the setting of full possibility frames:

\begin{definition}
We say that a modal formula $\phi$ in the basic modal language $\mathcal{L}$ \emph{locally corresponds} to a first-order formula $\alpha(x)$ in the first-order correspondence language with no occurence of unary predicate symbols, if for any full possibility frame $\mathbb{F}=(W, \sqsubseteq, R, \mathsf{RO}(W, \tau_{\sqsubseteq}))$, any $w\in W$, $\mathbb{F}, w\vDash\phi$ iff $\mathbb{F}\vDash\alpha[w]$. We say that $\phi$ \emph{globally corresponds} to a first-order sentence $\alpha$ with no occurence of unary predicate symbols, if for any full possibility frame $\mathbb{F}$, $\mathbb{F}\vDash\phi$ iff $\mathbb{F}\vDash\alpha$. For inequalities and quasi-inequalities, the definition is similar.
\end{definition}

The definition above can be easily adapted to the setting of filter-descriptive possibility frames and so on.

\subsubsection{The regular open translation}\label{asec:regular:open:translation}

In the present section, we will give the first-order translation of the expanded modal language into the first-order correspondence language, in the spirit of the standard translation in \cite[Section 2.4]{BRV01}. Since the translation is based on the semantic interpretation of modal formulas on the relational structures, and in possibility semantics, the conditions about regular open sets play an important role, we will call our translation \emph{regular open translation}.

For the sake of convenience, we give the following definition:

\begin{definition}[Syntactic regular open closure](cf.\ \cite[page 8]{Ya16})
Given a first-order formula $\alpha(x)$ with at most $x$ free, the \emph{syntactic regular open closure} $\mathsf{RO}_{x}(\alpha(x))$ is defined as $(\forall y\sqsubseteq x)(\exists z\sqsubseteq y)(\exists z'\sqsupseteq z)\alpha(z')$.
\end{definition}
It is easy to see that the syntactic regular open closure of a formula is interpreted as the semantic regular open closure of its corresponding truth set:
\begin{prop}(cf.\ \cite[Lemma 3.8]{Ya16})
$\llbracket\mathsf{RO}_x(\alpha(x))\rrbracket^{\mathbb{F}, V}=\mathsf{ro}(\llbracket\alpha(x)\rrbracket^{\mathbb{F}, V})$.
\end{prop}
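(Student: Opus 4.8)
The plan is to prove the equality $\llbracket\mathsf{RO}_x(\alpha(x))\rrbracket^{\mathbb{F}, V}=\mathsf{ro}(\llbracket\alpha(x)\rrbracket^{\mathbb{F}, V})$ by first unwinding the semantic meaning of the syntactic regular open closure directly from the satisfaction clauses, and then comparing it with the description of $\mathsf{ro}$ provided by Proposition \ref{afacts:regular:open}. Write $A:=\llbracket\alpha(x)\rrbracket^{\mathbb{F}, V}$ for the truth set of $\alpha$. The definition of $\mathsf{RO}_x(\alpha(x))$ is the formula $(\forall y\sqsubseteq x)(\exists z\sqsubseteq y)(\exists z'\sqsupseteq z)\alpha(z')$, so a point $w$ satisfies it exactly when for every $y\sqsubseteq w$ there is some $z\sqsubseteq y$ such that some $z'\sqsupseteq z$ lies in $A$. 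My first step is therefore to read off, clause by clause, the set-theoretic content of this quantifier prefix.

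The key observation is that the innermost part $(\exists z'\sqsupseteq z)\alpha(z')$ defines precisely the set $\{z\in W\mid(\exists z'\sqsupseteq z)(z'\in A)\}$, which by the definition of ${\Downarrow}$ in the downset-topology subsection equals ${\Downarrow}A$. Next, the layer $(\exists z\sqsubseteq y)(\,\cdots)$ says that $y$ has some refinement in ${\Downarrow}A$; by the definition of $\mathsf{cl}$, this is exactly the condition $y\in\mathsf{cl}({\Downarrow}A)$. Finally the outermost $(\forall y\sqsubseteq x)(\,\cdots)$ says that every refinement of $x$ lies in $\mathsf{cl}({\Downarrow}A)$, which by the definition of $\mathsf{int}$ is precisely $x\in\mathsf{int}(\mathsf{cl}({\Downarrow}A))$. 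Thus carrying out these three substitutions in order yields
\begin{center}
$\llbracket\mathsf{RO}_x(\alpha(x))\rrbracket^{\mathbb{F}, V}=\mathsf{int}(\mathsf{cl}({\Downarrow}A)).$
\end{center}

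To close the argument I would invoke item (2) of Proposition \ref{afacts:regular:open}, which states that $\mathsf{ro}(X)=\mathsf{int}(\mathsf{cl}({\Downarrow}X))$ for \emph{any} subset $X\subseteq W$. Applying this with $X=A$ gives $\mathsf{int}(\mathsf{cl}({\Downarrow}A))=\mathsf{ro}(A)=\mathsf{ro}(\llbracket\alpha(x)\rrbracket^{\mathbb{F}, V})$, which is exactly the desired identity. The whole proof is thus a matter of matching each quantifier alternation in the prefix against the corresponding topological operator and then citing the known formula for $\mathsf{ro}$.

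I expect the argument to be essentially routine, since each step is a direct transcription of a satisfaction clause into the defining set-builder notation for $\mathsf{cl}$, $\mathsf{int}$, and ${\Downarrow}$. The only point demanding genuine care—and hence the main obstacle—is getting the direction of the refinement relation $\sqsubseteq$ right in each layer, since $\mathsf{cl}$ quantifies over $y\sqsubseteq x$ (refinements) whereas ${\Downarrow}$ quantifies over $y\sqsupseteq x$ (coarsenings); a single sign slip would break the correspondence. I would therefore verify the three substitutions against the explicit definitions of $\mathsf{cl}(X)=\{x\mid(\exists y\sqsubseteq x)(y\in X)\}$, $\mathsf{int}(X)=\{x\mid(\forall y\sqsubseteq x)(y\in X)\}$, and ${\Downarrow}X=\{x\mid(\exists y\sqsupseteq x)(y\in X)\}$, confirming that the innermost existential over $z'\sqsupseteq z$ is what produces the ${\Downarrow}$ (coarsening) rather than a further closure, and that the use of general subsets in item (2) of Proposition \ref{afacts:regular:open} is what lets us avoid any prior assumption that $A$ is open.
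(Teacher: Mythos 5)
Your proof is correct and is exactly the argument the paper leaves implicit (the paper states this proposition without proof, deferring to \cite[Lemma 3.8]{Ya16}): the three quantifier layers transcribe, in order, into ${\Downarrow}$, $\mathsf{cl}$, and $\mathsf{int}$, and item (2) of Proposition \ref{afacts:regular:open} closes the gap without needing $\llbracket\alpha(x)\rrbracket^{\mathbb{F},V}$ to be open. Your attention to the orientation of $\sqsubseteq$ in each layer is well placed and the identifications all check out against the paper's definitions.
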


Since nominals are interpreted as elements in $\mathsf{PsAt}(\mathbb{B}_{\mathsf{RO}})$, i.e.\ regular open closures of singletons, we will translate nominals to the syntactic regular open closure of the identity $i=x$, where $i$ is the individual symbol interpreted as a state $\underline{i_n}\in W$ such that $\mathsf{ro}(\{\underline{i_n}\})=V(\nomi_n)$. The connectives are interpreted according to the definition of their satisfaction relations.

Now we are ready to give the regular open translation as follows:

\begin{definition}[Regular open translation](cf.\ \cite[Definition 2.6]{Ya16})
The regular open translation of a formula in the expanded modal language $\mathcal{L}^{+}$ into the first-order correspondence language $\mathcal{L}^{1}$ is given as follows:

\begin{center}
\begin{tabular}{r c l}
$ST_x(\nomi)$ & := & $\mathsf{RO}_{x}(i=x)$;\\
$ST_x(p_{i})$ & := & $P_{i}x$;\\
$ST_x(\neg\varphi)$ & := & $\forall y(y\sqsubseteq x\to\neg ST_{y}(\varphi))$;\\
$ST_x(\varphi_1\land\varphi_2)$ & := & $ST_x(\varphi_1)\land ST_x(\varphi_2)$;\\
$ST_x(\Box\varphi)$ & := & $\forall y(Rxy\to ST_y(\varphi))$;\\
$ST_x(\Diamondblack\varphi)$ & := & $\mathsf{RO}_{x}(\exists y(Ryx\land ST_y(\varphi)))$.\\
\end{tabular}
\end{center}
\end{definition}

By Proposition \ref{aprop:additional:connectives} and \ref{aprop:semantic:condition:black}, we can also take $\top,\bot,\lor,\to,\Diamond,\blacksquare$ as primitive connectives and define the following translation:

\begin{definition}[Regular open translation continued]\label{adef:trans:2}
$\ $
\begin{center}
\begin{tabular}{r c l}
$ST_x(\top)$ & := & $\top$;\\
$ST_x(\bot)$ & := & $\bot$;\\
$ST_x(\varphi_1\lor\varphi_2)$ & := & $(\forall y\sqsubseteq x)(\exists z\sqsubseteq y)(ST_z(\varphi_1)\lor ST_z(\varphi_2))$;\\
$ST_x(\varphi_1\to\varphi_2)$ & := & $(\forall y\sqsubseteq x)(ST_y(\varphi_1)\to ST_y(\varphi_2))$;\\
$ST_x(\Diamond\varphi)$ & := & $(\forall y\sqsubseteq x)\exists z(Ryz\land(\exists w\sqsubseteq z)(ST_w(\varphi))$;\\
$ST_x(\blacksquare\varphi)$ & := & $(\forall y\sqsubseteq x)(\forall z\sqsupseteq y)\forall w(Rwz\Rightarrow(\exists v\sqsubseteq w)(ST_v(\varphi))$.\\
\end{tabular}
\end{center}
\end{definition}

The following proposition justifies the translation defined above:

\begin{prop}\label{aprop:translation}(cf.\ \cite[Lemma 2.8]{Ya16})
For any possibility frame $\mathbb{F}=(W,\sqsubseteq,R,\mathsf{P})$, any valuation $V$ on $\mathbb{F}$, any $w\in W$ and any formula $\phi(\vec p)$ in $\mathcal{L}^{+}$,
\begin{itemize}
\item $\mathbb{F},V,w\vDash\phi(\vec p)\mbox{ iff }\mathbb{F},V\vDash ST_x(\phi(\vec p))[w];$
\item $\mathbb{F},V\vDash\phi(\vec p)\mbox{ iff }\mathbb{F},V\vDash \forall xST_x(\phi(\vec p));$
\item $\mathbb{F},w\vDash\phi(\vec p)\mbox{ iff }\mathbb{F}\vDash \forall^{\mathsf{P}}{\vec P}\forall{\vec i} ST_x(\phi(\vec p))[w];$
\item $\mathbb{F}\vDash\phi(\vec p)\mbox{ iff }\mathbb{F}\vDash\forall^{\mathsf{P}}{\vec P}\forall{\vec i}\forall xST_x(\phi(\vec p));$
\item $\mathbb{F},V\vDash\phi(\vec p)\leq\psi(\vec p)\mbox{ iff }\mathbb{F},V\vDash \forall x(ST_x(\phi(\vec p))\to ST_x(\psi(\vec p)));$
\item $\mathbb{F}\vDash\phi(\vec p)\leq\psi(\vec p)\mbox{ iff }\mathbb{F}\vDash\forall^{\mathsf{P}}{\vec P}\forall{\vec i}\forall x(ST_x(\phi(\vec p))\to ST_x(\psi(\vec p)));$
\item $\mathbb{F},V\vDash\bigamp_{j=1}^{n}(\phi_j(\vec p)\leq\psi_j(\vec p))\Rightarrow\varphi(\vec p)\leq\psi(\vec p)\mbox{ iff }\mathbb{F},V\vDash 
\bigwedge_{j=1}^{n}\forall x(ST_x(\phi_j(\vec p))\to ST_x(\psi_j(\vec p)))\to\forall x(ST_x(\phi(\vec p))\to ST_x(\psi(\vec p)));$
\item $\mathbb{F}\vDash\bigamp_{j=1}^{n}(\phi_j(\vec p)\leq\psi_j(\vec p))\Rightarrow\varphi(\vec p)\leq\psi(\vec p)\mbox{ iff }\mathbb{F},V\vDash 
\forall^{\mathsf{P}}{\vec P}\forall{\vec i}(\bigwedge_{j=1}^{n}\forall x(ST_x(\phi_j(\vec p))\to ST_x(\psi_j(\vec p)))\to\forall x(ST_x(\phi(\vec p))\to ST_x(\psi(\vec p)))).$
\end{itemize}
where $\vec P$ are the unary predicate symbols corresponding to $\vec p$.
\end{prop}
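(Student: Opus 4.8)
The plan is to prove the first item — the pointwise equivalence $\mathbb{F},V,w\vDash\phi(\vec p)$ iff $\mathbb{F},V\vDash ST_x(\phi(\vec p))[w]$ — as the foundational result by induction on the structure of $\phi$, and then to derive all the remaining items from it by unwinding the definitions of global truth and validity together with the correspondence between valuations and the quantifiers $\forall^{\mathsf{P}}\vec P$ and $\forall\vec i$.

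For the induction, the clauses of the two translation definitions preceding the statement have been designed to mirror the satisfaction conditions recorded in Propositions \ref{aprop:additional:connectives} and \ref{aprop:semantic:condition:black}, so most cases reduce immediately to the induction hypothesis. The case $\phi=p_i$ holds because $P_i$ is interpreted as $V(p_i)$; the cases $\phi=\neg\psi$, $\phi=\psi_1\land\psi_2$ and $\phi=\Box\psi$ follow by substituting the induction hypothesis into the respective satisfaction clauses and comparing with $ST_x(\neg\psi)$, $ST_x(\psi_1\land\psi_2)$ and $ST_x(\Box\psi)$. The two cases needing more than routine unfolding are the nominal and the black diamond, as both involve the regular open closure. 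For $\phi=\nomi$ we use that $V(\nomi)\in\mathsf{PsAt}(\mathbb{B}_{\mathsf{RO}})$, so $V(\nomi)=\mathsf{ro}(\{\underline{i}\})$ for the state $\underline{i}$ denoted by $i$; the proposition on syntactic regular open closures then gives $\llbracket\mathsf{RO}_x(i=x)\rrbracket^{\mathbb{F},V}=\mathsf{ro}(\llbracket i=x\rrbracket^{\mathbb{F},V})=\mathsf{ro}(\{\underline{i}\})=V(\nomi)$, exactly as required. For $\phi=\Diamondblack\psi$ the induction hypothesis yields $\llbracket\exists y(Ryx\land ST_y(\psi))\rrbracket^{\mathbb{F},V}=R[\llbracket\psi\rrbracket^{\mathbb{F},V}]$, and a further application of that proposition gives $\llbracket ST_x(\Diamondblack\psi)\rrbracket^{\mathbb{F},V}=\mathsf{ro}(R[\llbracket\psi\rrbracket^{\mathbb{F},V}])$, which is the satisfaction condition for $\Diamondblack$ in Proposition \ref{aprop:semantic:condition:black}.

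With the first item in hand, the second follows by universally quantifying the pointwise equivalence over $w\in W$. The inequality item in a model (fifth) follows from the characterization $\mathbb{F},V\vDash\phi\leq\psi$ iff $\llbracket\phi\rrbracket^{\mathbb{F},V}\subseteq\llbracket\psi\rrbracket^{\mathbb{F},V}$ in Proposition \ref{aprop:additional:connectives}, which under the first item becomes the object-level implication $\forall x(ST_x(\phi)\to ST_x(\psi))$; the quasi-inequality item in a model (seventh) then follows by reading the meta-conjunction and meta-implication as the displayed object-level connectives. The validity items (third, fourth, sixth and eighth) are obtained from their in-model counterparts by additionally quantifying over all valuations: restricting each propositional variable $p_i$ to range over $\mathsf{P}$ corresponds to the admissible second-order quantifier $\forall^{\mathsf{P}}\vec P$, while interpreting each nominal $\nomi_n$ over $\mathsf{PsAt}(\mathbb{B}_{\mathsf{RO}})$ corresponds to the first-order quantifier $\forall\vec i$ over individual symbols, using that every element of $\mathsf{PsAt}(\mathbb{B}_{\mathsf{RO}})$ equals $\mathsf{ro}(\{w\})$ for some $w\in W$.

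The main obstacle is the bookkeeping in the validity clauses, where one must check that ranging over all admissible valuations matches exactly the intended ranges of $\forall^{\mathsf{P}}\vec P$ and $\forall\vec i$. The delicate point is that $w\mapsto\mathsf{ro}(\{w\})$ is surjective onto $\mathsf{PsAt}(\mathbb{B}_{\mathsf{RO}})$ but not injective, so distinct interpretations of an individual symbol may induce the same nominal value; since validity quantifies universally over valuations, this surjectivity suffices and the redundancy is harmless. All remaining steps are a straightforward induction whose inductive cases are dictated by the matching between the translation clauses and the already-established satisfaction conditions.
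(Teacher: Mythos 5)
Your proposal is correct and follows essentially the same route as the paper, whose entire proof is the one-line remark ``by induction on the structure of $\phi$ and the satisfaction relation for each connective and variable, as well as the semantics of the first-order correspondence language''; your write-up is a faithful expansion of exactly that induction, with the nominal and $\Diamondblack$ cases handled via the syntactic regular open closure lemma and Proposition \ref{aprop:semantic:condition:black} as intended. The observation about surjectivity (but non-injectivity) of $w\mapsto\mathsf{ro}(\{w\})$ onto $\mathsf{PsAt}(\mathbb{B}_{\mathsf{RO}})$ in the validity clauses is a genuine detail the paper glosses over, and you resolve it correctly.
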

\begin{proof}
By induction on the structure of $\phi$ and the satisfaction relation for each connective and variable, as well as the semantics of the first-order correspondence language. 
\end{proof}

\begin{remark}\label{aremark:simplify:translation}
In fact, since unary predicate symbols are interpreted only as admissible subsets (therefore regular open subsets), and the truth set of every formula $\phi$ is a regular open subset, there are some first-order formulas valid on all full possibility frames which are not first-order theorems (e.g.\ $ST_{x}(\phi)\leftrightarrow(\forall y\sqsubseteq x)(\exists z\sqsubseteq y)(\exists w\sqsupseteq z)ST_{w}(\phi)$). As a result, we have different ways to obtain translations. For example, if we obtain the translation of $\varphi\to\psi$ directly from the syntactic definition of $\to$, then we would have something different:
$$ST_x(\varphi_1\to\varphi_2):=(\forall y\sqsubseteq x)(\exists z\sqsubseteq y)(((\forall w\sqsubseteq z)\neg ST_w(\varphi_1))\lor ST_z(\varphi_2))$$
In fact, the translation given above is equivalent to the one in Definition \ref{adef:trans:2}: $$\llbracket(\forall y\sqsubseteq x)(\exists z\sqsubseteq y)(((\forall w\sqsubseteq z)\neg ST_w(\varphi_1))\lor ST_z(\varphi_2))\rrbracket^{\mathbb{F}, V}=\llbracket\neg\phi_1\lor\phi_2\rrbracket^{\mathbb{F}, V},$$ and 
$$\llbracket(\forall y\sqsubseteq x)(ST_y(\varphi_1)\to ST_y(\varphi_2))\rrbracket^{\mathbb{F}, V}=\llbracket\phi_1\to\phi_2\rrbracket^{\mathbb{F}, V},$$ which are the same.
\end{remark}

\subsection{Sahlqvist and inductive formulas}\label{asec:Sahlqvist}
In the present section, we will report on the definitions of sahlqvist and inductive inequalities for classical modal formulas. Sahlqvist formulas are very well known and their definition can be found e.g. in \cite[Definition 3.51]{BRV01}. Inductive formulas are defined by Goranko and Vakarelov in \cite{GorankoV06}. However, because in the following sections we are going to make crucial use of the order-theoretic properties of the algebraic interpretations of the logical connectives,in the present section we will specialize the general definition of adopted in unified correspondence to the signature of classical modal logic. Our presentation follows e.g. \cite{CPSZ,CPZ:Trans,PaSoZh16}.

\begin{definition}[Order-type of propositional variables]
We will consider the order-type of tuples of propositional variables $(p_1, \ldots, p_n)$, and say that $p_i$ has order-type 1 (resp.\ $\partial$) if $\epsilon_i=1$ (resp.\ $\epsilon_i=\partial$), and denote $\epsilon(p_i)=1$ (resp.\ $\epsilon(p_i)=\partial$) or $\epsilon(i)=1$ (resp.\ $\epsilon(i)=\partial$).
\end{definition}

\begin{definition}[Signed generation tree]\label{adef: signed gen tree}(cf.\ \cite[Definition 4]{CPZ:Trans})
The \emph{positive} (resp.\ \emph{negative}) {\em generation tree} of any given formula $\phi$ is defined by first labelling the root of the generation tree of $\phi$ with $+$ (resp.\ $-$), and then labelling the children nodes as follows:
\begin{itemize}
\item Assign the same sign to the children nodes of any node labelled with $\Box, \Diamond, \lor, \land$;
\item Assign the opposite sign to the child node of any node labelled with $\neg$;
\item Assign the opposite sign to the first child node and the same sign to the second child node of any node labelled with $\to$.
\end{itemize}
Nodes in signed generation trees are \emph{positive} (resp.\ \emph{negative}) if they are signed $+$ (resp.\ $-$).
\end{definition}

Signed generation trees will be used in the environment of inequalities $\phi\leq\psi$, where we will use the positive generation tree $+\phi$ and the negative one $-\psi$. We will also say that an inequality $\phi\leq\psi$ is \emph{uniform} in a variable $p$ if all occurrences of $p$ in $+\phi$ and $-\psi$ have the same sign, and that $\phi\leq\psi$ is $\epsilon$-\emph{uniform} in an array $\vec{p}$ if $\phi\leq\psi$ is uniform in $p$, occurring with the sign indicated by $\epsilon$, for each propositional variable $p$ in $\vec{p}$.

For any given formula $\phi(p_1,\ldots p_n)$, any order-type $\epsilon$ over $n$, and any $1 \leq i \leq n$, an \emph{$\epsilon$-critical node} in a signed generation tree of $\phi$ is a leaf node $+p_i$ when $\epsilon_i = 1$ or $-p_i$ when $\epsilon_i = \partial$. An $\epsilon$-{\em critical branch} in a signed generation tree is a branch from an $\epsilon$-critical node. The $\epsilon$-critical occurrences are intended to be those which the algorithm ALBA will solve for. We say that $+\phi$ (resp.\ $-\phi$) {\em agrees with} $\epsilon$, and write $\epsilon(+\phi)$ (resp.\ $\epsilon(-\phi)$), if every leaf node in the signed generation tree of $+\phi$ (resp.\ $-\phi$) is $\epsilon$-critical.

We will also use the notation $+\psi\prec \ast \phi$ (resp.\ $-\psi\prec \ast \phi$) to indicate that the subformula $\psi$ inherits the positive (resp.\ negative) sign from the signed generation tree $\ast \phi$, where $\ast\in\{+,-\}$. We will write $\epsilon(\gamma) \prec \ast \phi$ (resp.\ $\epsilon^\partial(\gamma) \prec \ast \phi$) to indicate that the signed generation subtree $\gamma$, with the sign inherited from $\ast \phi$, agrees with $\epsilon$ (resp.\ with $\epsilon^\partial$). We say that a propositional variable $p$ is \emph{positive} (resp.\ \emph{negative}) in $\phi$ if $+p\prec+\phi$ (resp.\ $-p\prec+\phi$).
\begin{definition}\label{adef:good:branches}(cf.\ \cite[Definition 5]{CPZ:Trans})
Nodes in signed generation trees are called \emph{$\Delta$-adjoints}, \emph{syntactically left residual (SLR)}, \emph{syntactically right adjoint (SRA)}, and \emph{syntactically right residual (SRR)}\footnote{For explanations of the terminologies here, we refer to \cite[Remark 3.24]{PaSoZh16}.}, according to Table \ref{aJoin:and:Meet:Friendly:Table}.

A branch in a signed generation tree is called a \emph{good branch} if it is the concatenation of two paths $P_1$ and $P_2$, one of which might be of length $0$, such that $P_1$ is a path from the leaf consisting (apart from variable nodes) of PIA-nodes only, and $P_2$ consists (apart from variable nodes) of Skeleton-nodes only. A good branch will be called \emph{excellent} if $P_1$ consists of SRA nodes only.
\begin{table}
\begin{center}
\begin{tabular}{| c | c |}
\hline
Skeleton &PIA\\
\hline
$\Delta$-adjoints & SRA \\
\begin{tabular}{ c c c c c c}
$+$ &$\vee$ &$\wedge$&\\
$-$ &$\wedge$ &$\vee$&\\
\end{tabular}
&
\begin{tabular}{c c c c }
$+$ &$\wedge$ &$\Box$ & $\neg$ \\
$-$ &$\vee$ &$\Diamond$ & $\neg$ \\
\end{tabular}\\
\hline
SLR &SRR\\
\begin{tabular}{c c c c c}
$+$ & $\wedge$ &$\Diamond$ & $\neg$ &\\
$-$ & $\vee$ &$\Box$ & $\neg$ & $\to$\\
\end{tabular}
&\begin{tabular}{c c c c}
$+$ &$\vee$ &$\to$\\
$-$ & $\wedge$ &\\
\end{tabular}
\\
\hline
\end{tabular}
\end{center}
\caption{Skeleton and PIA nodes.}\label{aJoin:and:Meet:Friendly:Table}
\vspace{-1em}
\end{table}
\end{definition}

\begin{definition}[Sahlqvist and inductive inequalities]\label{aInducive:Ineq:Def}(cf.\ \cite[Definition 6]{CPZ:Trans})
For any order-type $\epsilon$ and any irreflexive and transitive binary relation $<_\Omega$ on $p_1,\ldots p_n$, the signed generation tree $*\phi$ $(* \in \{-, + \})$ of a formula $\phi(p_1,\ldots p_n)$ is \emph{$(\Omega, \epsilon)$-inductive} if
\begin{enumerate}
\item for all $1 \leq i \leq n$, every $\epsilon$-critical branch with leaf $p_i$ is good (cf.\ Definition \ref{adef:good:branches});
\item every SRR-node in the critical branch is either $\bigstar(\gamma,\beta)$ or $\bigstar(\beta,\gamma)$, where the critical branch is in $\beta$, and
\begin{enumerate}
\item $\epsilon^\partial(\gamma) \prec \ast \phi$ (cf.\ the discussion before Definition \ref{adef:good:branches}), and
\item $p_k <_{\Omega} p_i$ for every $p_k$ that occurs in $\gamma$.
\end{enumerate}
\end{enumerate}

For any order-type $\epsilon$, the signed generation tree $*\phi$ of a formula $\phi(p_1,\ldots p_n)$ is \emph{$\epsilon$-Sahlqvist} if for all $1 \leq i \leq n$, every $\epsilon$-critical branch with leaf $p_i$ is excellent (cf.\ Definition \ref{adef:good:branches}).
					
We will refer to $<_{\Omega}$ as the \emph{dependence order} on the variables. An inequality $\phi\leq\psi$ is \emph{$(\Omega, \epsilon)$-Sahlqvist} (resp.\ \emph{$\epsilon$-inductive}) if the signed generation trees $+\phi$ and $-\psi$ are $(\Omega, \epsilon)$-Sahlqvist (resp.\ \emph{$\epsilon$-inductive}). An inequality $\phi\leq\psi$ is \emph{Sahlqvist} (resp.\ inductive) if it is $(\Omega, \epsilon)$-Sahlqvist (\emph{$\epsilon$}-inductive) for some ($\Omega$, $\epsilon$).
\end{definition}
\section{The algorithm ALBA for possibility semantics}\label{aSec:ALBA}
In the present section, we will give the algorithm ALBA for possibility semantics, which is similar to the version in \cite{CP:constructive,CoPa:non-dist}, especially when it comes to the approximation rules. ALBA receives an inequality $\phi \leq \psi$ as input and then proceeds in the following three stages:

The first stage is the preprocessing stage, which eliminates all uniformly occurring propositional variables, and exhaustively apply the distribution and splitting rules. This stage produces a finite number of inequalities, $\phi'_i \leq \psi'_i$, $1 \leq i \leq n$. Then the inequalities are rewritten as \emph{initial quasi-inequalities} $\bigamp S_i \Rightarrow \sf{Ineq}_i$, represented as pairs $(S_i, \sf{Ineq}_i)$ called \emph{systems}, where each $S_i$ is initialized to the empty set and each $\sf{Ineq}_i$ is initialized to $\phi'_i \leq \psi'_i$.

The second stage is the reduction and elimination stage, which applies the transformation rules to transform $(S_i, \mathsf{Ineq}_i)$ into a system which does not contain propositional variables, but only nominals. Such a system will be called \emph{pure}. Among all the rules, the Ackermann-rules eliminate the propositional variables, and the other rules rewrite the system to make the Ackermann rules applicable. When all propositional variables are eliminated, the algorithm produces pure quasi-inequalities $\bigamp S_i \Rightarrow \mathsf{Ineq}_i$.

The third stage is the output stage. If some of the systems cannot be purified, then the algorithm reports failure and terminate. Otherwise, the algorithm outputs the conjunction of the pure quasi-inequalities $\bigamp S_i \Rightarrow \mathsf{Ineq}_i$ as well as its regular open translation, which we denote by $\mathsf{ALBA}(\phi \leq \psi)$ and $\mathsf{FO}(\phi \leq \psi)$, respectively.

The three stages are reported on in detail below:

\paragraph{Stage 1: Preprocessing and initialization} ALBA receives an inequality $\phi \leq \psi$ as input.

In the signed generation tree of $+\phi$ and $-\psi$,
\begin{enumerate}
\item Apply the distribution rules:
\begin{enumerate}
\item Push down $+\Diamond, +\land$, $-\neg$ and $-\to$, by distributing them over nodes labelled with $+\lor$ which are Skeleton nodes, and
\item Push down $-\Box, -\lor$, $+\neg$ and $-\to$, by distributing them over nodes labelled with $-\land$ which are Skeleton nodes.
\end{enumerate}
\item Apply the monotone and antitone variable-elimination rules:
$$\infer{\alpha(\perp)\leq\beta(\perp)}{\alpha(p)\leq\beta(p)}
\qquad
\infer{\beta(\top)\leq\alpha(\top)}{\beta(p)\leq\alpha(p)}
$$
where $\beta(p)$ is positive in $p$ and $\alpha(p)$ is negative in $p$.
\item Apply the splitting rules:
$$\infer{\alpha\leq\beta\ \ \ \alpha\leq\gamma}{\alpha\leq\beta\land\gamma}
\qquad
\infer{\alpha\leq\gamma\ \ \ \beta\leq\gamma}{\alpha\lor\beta\leq\gamma}
$$
\end{enumerate}
After applying the rules above exhaustively, ALBA produces a set of inequalities $\{\phi_i' \leq \psi_i'\mid 1\leq i\leq n\}$. Then the inequalities are rewritten as \emph{initial quasi-inequalities} $\bigamp S_i \Rightarrow \sf{Ineq}_i$, represented as pairs $(S_i, \sf{Ineq}_i)$ called \emph{systems}, where each $S_i$ is initialized to the empty set and each $\sf{Ineq}_i$ is initialized to $\phi'_i \leq \psi'_i$. Each initial system is processed separately in Stage 2, where we will omit indices $i$.

\paragraph{Stage 2: Reduction and elimination}
This stage aims at eliminating all propositional variables from a system $(S, \mathsf{Ineq})$. This is done by the following \emph{approximation rules}, \emph{residuation rules}, \emph{splitting rules}, and \emph{Ackermann-rules}, collectively called \emph{reduction rules}. The formulas and inequalities in this subsection are from the expanded modal language with nominals and black connectives.

\paragraph{Approximation rules.} There are four approximation rules. Each of these rules simplifies $\mathsf{Ineq}$ and adds an inequality to $S$. The notation $\phi(!x)$ indicate that $x$ occurs only once in $\phi$, and the branch of $\phi(!x)$ starting at $x$ means the path from $x$ to the root.
\begin{description}
\item[Left-positive approximation rule.] $\phantom{a}$
\begin{center}
\AxiomC{$(S, \;\; \phi'(\gamma / !x)\leq \psi')$}
\RightLabel{$(L^+A)$}
\UnaryInfC{$(S\! \cup\! \{ \nomi \leq \gamma\},\;\; \phi'(\nomi / !x)\leq \psi')$}
\DisplayProof
\end{center}
where $+x \prec +\phi'(!x)$, the branch of $+\phi'(!x)$ starting at $+x$ is SLR, $\gamma$ belongs to the basic modal language and $\nomi$ is a nominal variable not occurring in $S$ or $\phi'(\gamma / !x)\leq \psi'$.
\item[Left-negative approximation rule.]$\phantom{a}$
\begin{center}
\AxiomC{$(S, \;\; \phi'(\gamma / !x)\leq \psi')$}
\RightLabel{$(L^-A)$}
\UnaryInfC{$(S\! \cup\! \{ \gamma\leq\neg\nomi\},\;\; \phi'(\neg\nomi/ !x)\leq \psi')$}
\DisplayProof
\end{center}
where $-x \prec +\phi'(!x)$, the branch of $+\phi'(!x)$ starting at $-x$ is SLR, $\gamma$ belongs to the basic modal language and $\nomi$ is a nominal variable not occurring in $S$ or $\phi'(\gamma / !x)\leq \psi'$.
\item[Right-positive approximation rule.]$\phantom{a}$
\begin{center}
\AxiomC{$(S, \;\; \phi'\leq \psi'(\gamma / !x))$}
\RightLabel{$(R^+A)$}
\UnaryInfC{$(S\! \cup\! \{ \nomi \leq \gamma\},\;\; \phi'\leq \psi'(\nomi/ !x))$}
\DisplayProof
\end{center}
where $+x \prec -\psi'(!x)$, the branch of $-\psi'(!x)$ starting at $+x$ is SLR, $\gamma$ belongs to the basic modal language and $\nomi$ is a nominal variable not occurring in $S$ or $\phi'\leq \psi'(\gamma / !x)$.
\item[Right-negative approximation rule.] $\phantom{a}$
\begin{center}
\AxiomC{$(S, \;\; \phi'\leq \psi'(\gamma / !x))$}
\RightLabel{$(R^-A)$}
\UnaryInfC{$(S\! \cup\! \{ \gamma\leq \neg\nomi\},\;\; \phi'\leq \psi'(\neg\nomi/ !x))$}
\DisplayProof
\end{center}
where $-x \prec -\psi'(!x)$, the branch of $-\psi'(!x)$ starting at $-x$ is SLR, $\gamma$ belongs to the basic modal language and $\nomi$ is a nominal variable not occurring in $S$ or $\phi'\leq \psi'(\gamma / !x))$.
\end{description}

We will restrict the applications of approximation rules to nodes $!x$ giving rise to {\em maximal} SLR branches, i.e.\ branches that cannot be extended further. Such applications will be called {\em pivotal}. Also, executions of ALBA in which approximation rules are applied only pivotally will be referred to as {\em pivotal}.

\paragraph{Residuation rules.} These rules rewrite a chosen inequality in $S$ into another inequality:

\begin{prooftree}
\AxiomC{$\Diamond\gamma\leq\delta$}
\RightLabel{($\Diamond$-Res)}
\UnaryInfC{$\gamma\leq\blacksquare\delta$}
\AxiomC{$\neg\gamma\leq\delta$}
\RightLabel{($\neg$-Res-Left)}
\UnaryInfC{$\neg\delta\leq\gamma$}
\noLine\BinaryInfC{}
\end{prooftree}

\begin{prooftree}
\AxiomC{$\gamma\leq\Box\delta$}
\RightLabel{($\Box$-Res)}
\UnaryInfC{$\Diamondblack\gamma\leq\delta$}
\AxiomC{$\gamma\leq\neg\delta$}
\RightLabel{($\neg$-Res-Right)}
\UnaryInfC{$\delta\leq\neg\gamma$}
\noLine\BinaryInfC{}
\end{prooftree}

\begin{prooftree}
\AxiomC{$\gamma\land\delta\leq\beta$}
\RightLabel{($\land$-Res-1)}
\UnaryInfC{$\gamma\leq\delta\to\beta$}
\AxiomC{$\gamma\leq\delta\lor\beta$}
\RightLabel{($\lor$-Res-1)}
\UnaryInfC{$\gamma\land\neg\delta\leq\beta$}
\noLine\BinaryInfC{}
\end{prooftree}

\begin{prooftree}
\AxiomC{$\gamma\land\delta\leq\beta$}
\RightLabel{($\land$-Res-2)}
\UnaryInfC{$\delta\leq\gamma\to\beta$}
\AxiomC{$\gamma\leq\delta\lor\beta$}
\RightLabel{($\lor$-Res-2)}
\UnaryInfC{$\gamma\land\neg\beta\leq\delta$}
\noLine\BinaryInfC{}
\end{prooftree}

\begin{prooftree}
\AxiomC{$\gamma\leq\delta\to\beta$}
\RightLabel{($\to$-Res-1)}
\UnaryInfC{$\gamma\land\delta\leq\beta$}
\AxiomC{$\gamma\leq\delta\to\beta$}
\RightLabel{($\to$-Res-2)}
\UnaryInfC{$\delta\leq\gamma\to\beta$}
\noLine\BinaryInfC{}
\end{prooftree}

\paragraph{Right Ackermann-rule.} $\phantom{a}$
\begin{center}
\AxiomC{$(\{ \alpha_i \leq p \mid 1 \leq i \leq n \} \cup \{ \beta_j(p)\leq \gamma_j(p) \mid 1 \leq j \leq m \}, \;\; \mathsf{Ineq})$}
\RightLabel{$(RAR)$}
\UnaryInfC{$(\{ \beta_j(\bigvee_{i=1}^n \alpha_i)\leq \gamma_j(\bigvee_{i=1}^n \alpha_i) \mid 1 \leq j \leq m \},\;\; \mathsf{Ineq})$}
\DisplayProof
\end{center}
where:
\begin{itemize}
\item $p$ does not occur in $\alpha_1, \ldots, \alpha_n$ or in $\mathsf{Ineq}$,
\item $\beta_{1}(p), \ldots, \beta_{m}(p)$ are positive in $p$, and
\item $\gamma_{1}(p), \ldots, \gamma_{m}(p)$ are negative in $p$.
\end{itemize}
\paragraph{Left Ackermann-rule.}$\phantom{a}$
\begin{center}
\AxiomC{$(\{ p \leq \alpha_i \mid 1 \leq i \leq n \} \cup \{ \beta_j(p)\leq \gamma_j(p) \mid 1 \leq j \leq m \}, \;\; \mathsf{Ineq})$}
\RightLabel{$(LAR)$}
\UnaryInfC{$(\{ \beta_j(\bigwedge_{i=1}^n \alpha_i)\leq \gamma_j(\bigwedge_{i=1}^n \alpha_i) \mid 1 \leq j \leq m \},\;\; \mathsf{Ineq})$}
\DisplayProof
\end{center}
where:
\begin{itemize}
\item $p$ does not occur in $\alpha_1, \ldots, \alpha_n$ or in $\mathsf{Ineq}$,
\item $\beta_{1}(p), \ldots, \beta_{m}(p)$ are negative in $p$, and
\item $\gamma_{1}(p), \ldots, \gamma_{m}(p)$ are positive in $p$.
\end{itemize}
\paragraph{Stage 3: Success, failure and output}

If Stage 2 succeeded in eliminating all propositional variables from each system, then for each $i$, the system becomes $(S_i, \mathsf{Ineq}_i)$, where 
$S_i=\{\phi_{i_k}(\vec \nomj_i)\leq\psi_{i_k}(\vec \nomj_i)\}_k$, $\mathsf{Ineq}_i$ is $\phi_{i}(\vec \nomj_i)\leq\psi_{i}(\vec \nomj_i)$. The algorithm returns the conjunction of these purified quasi-inequalities and its regular open translation
$$\bigwedge_i\forall\vec j_i(\bigwedge_k(\forall x(ST_x(\phi_{i_k}(\vec \nomj_i))\to ST_x(\psi_{i_k}(\vec \nomj_i))))\to\forall x(ST_x(\phi_{i}(\vec \nomj_i))\to ST_x(\psi_{i}(\vec \nomj_i)))),$$
denoted by $\mathsf{ALBA}(\phi \leq \psi)$ and $\mathsf{FO}(\phi \leq \psi)$, respectively. Otherwise, the algorithm reports failure and terminates.

\begin{example}
Let us consider the following inequality $\Box p\leq p$. 

In the first stage, no rules are applied, so the initial system is $$(\emptyset, \Box p\leq p).$$

In the second stage, by applying the left-positive approximation rule, we get

$$(\{\nomi\leq \Box p\}, \nomi\leq p);$$

then by the right-negative approximation rule, we get 

$$(\{\nomi\leq \Box p, p\leq\neg\nomj\}, \nomi\leq \neg\nomj);$$

then by the residuation rule, we get 

$$(\{\Diamondblack\nomi\leq p, p\leq\neg\nomj\}, \nomi\leq \neg\nomj);$$

then by the right Ackermann rule, we get

$$(\{\Diamondblack\nomi\leq\neg\nomj\}, \nomi\leq \neg\nomj);$$

this system is equivalent to the following pure-inequality:

$$\nomi\leq\Diamondblack\nomi.$$

In the third stage, the first-order correspondent is given as follows:

\begin{center}
$\forall i\forall x(ST_{x}(\nomi)\to ST_{x}(\Diamondblack\nomi))$

$\forall i\forall x(\mathsf{RO}_{x}(x=i)\to ST_{x}(\Diamondblack\nomi))$

$\forall i\forall x(\mathsf{RO}_{x}(x=i)\to \mathsf{RO}_{x}(\exists y (Ryx\land\mathsf{RO}_{y}(y=i)))).$
\end{center}
\end{example}

As we can see from this example, the first-order correspondent of modal formulas will become much more complicated even for very simple input.
\section{Soundness}\label{aSec:soundness}

In the present section, we will prove the soundness of the algorithm with respect to regular open dual BAOs of full possibility frames. For most of the rules, the proof is similar to existing settings \cite{CP:constructive,CoPa:non-dist}, and hence omitted. We will focus on the approximation rules, which are the only rules dealing with the variant interpretations of nominals.
\begin{theorem}[Soundness]\label{aCrctns:Theorem} If ALBA succeeds on an input inequality $\phi\leq\psi$ and outputs $\mathsf{FO}(\phi\leq\psi)$, then for any full possibility frame $\mathbb{F}_{\mathsf{RO}}$, $$\mathbb{F}_{\mathsf{RO}}\vDash\phi \leq \psi\mbox{ iff }\mathbb{F}_{\mathsf{RO}}\vDash\mathsf{FO}(\phi \leq \psi).$$
\end{theorem}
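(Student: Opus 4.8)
The plan is to establish the soundness theorem by verifying that each transformation rule of the algorithm preserves the relevant validity, and then chaining these equivalences together. Since the algorithm proceeds in three stages, I would structure the argument stage by stage. By Proposition \ref{aprop:equiv:duality:expanded}, validity on the full possibility frame $\mathbb{F}_{\mathsf{RO}}$ is equivalent to validity on the dual BAO $\mathbb{B}_{\mathsf{RO}}$, so I would carry out the entire soundness argument at the algebraic level, where the order-theoretic properties of the connectives are directly available. The goal then reduces to showing that for each rule transforming a system (or inequality) into another, the two are equivalent as quasi-inequalities valid on $\mathbb{B}_{\mathsf{RO}}$ (with nominals ranging over $\mathsf{PsAt}(\mathbb{B}_{\mathsf{RO}})$ and propositional variables over all of $\mathbb{B}_{\mathsf{RO}}$).

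First I would note that the preprocessing rules (Stage 1)—the distribution rules, the monotone/antitone variable-elimination rules, and the splitting rules—are sound by standard order-theoretic reasoning: distribution rules rely on $\Diamond$, $\wedge$ (and their order duals) distributing over joins in the complete lattice $\mathbb{B}_{\mathsf{RO}}$, the elimination rules use monotonicity of $\beta$ and antitonicity of $\alpha$ together with the fact that $\bot$ and $\top$ are the extremal elements, and the splitting rules are immediate from the definition of meet and join. These are essentially identical to the existing settings of \cite{CP:constructive,CoPa:non-dist}, so I would only remark on them. Likewise the residuation rules are sound because each instance reflects a genuine adjunction: $\Diamond_{\mathsf{RO}} \dashv \blacksquare$ (the right adjoint of $\Diamond$, established in the proposition preceding this theorem), $\Diamondblack_{\mathsf{RO}} \dashv \Box_{\mathsf{RO}}$ (from Lemma \ref{alem:Diamond:RO}), and the standard residuations for $\wedge$, $\vee$, $\to$, and $\neg$ in a Boolean algebra. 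The Ackermann-rules are sound by the usual Ackermann-lemma argument in a complete lattice: for the right rule, the minimal valuation $\bigvee_{i=1}^n \alpha_i$ of $p$ witnesses the equivalence precisely because the $\beta_j$ are monotone and the $\gamma_j$ antitone in $p$, and $\mathbb{B}_{\mathsf{RO}}$ is complete so the required join exists.

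The main obstacle, and the part deserving full attention, is the soundness of the approximation rules, since these are the only rules that exploit the nonstandard interpretation of nominals as elements of $\mathsf{PsAt}(\mathbb{B}_{\mathsf{RO}})$ rather than as atoms. The crux is Proposition \ref{aprop:regular:open:join}: every $a \in \mathbb{B}_{\mathsf{RO}}$ is the join of the pseudo-atoms below it, so $\mathsf{PsAt}(\mathbb{B}_{\mathsf{RO}})$ is join-dense in $\mathbb{B}_{\mathsf{RO}}$. For the left-positive rule, I would argue that $\phi'(\gamma/!x) \leq \psi'$ holds for all admissible assignments if and only if $\nomi \leq \gamma$ together with $\phi'(\nomi/!x) \leq \psi'$ holds for all assignments with $\nomi \in \mathsf{PsAt}(\mathbb{B}_{\mathsf{RO}})$. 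The forward direction is routine by monotonicity. For the reverse direction I would use that the branch from $x$ is SLR, hence $\phi'(\cdot)$ interpreted along that branch is completely join-preserving in the coordinate of $x$, so its value at $\gamma$ equals the join of its values at each pseudo-atom $\nomi \leq \gamma$; join-density then lets me pass from the quantification over pseudo-atoms back to the single inequality at $\gamma$. The restriction to pivotal applications on maximal SLR branches is what guarantees complete join-preservation rather than mere monotonicity. The negative approximation rules are handled symmetrically, using that $-x$ on an SLR branch behaves dually (via $\neg$ sending joins to meets). I would present the left-positive case in full and indicate that the remaining three follow by the evident order-dual and sign-dual modifications.
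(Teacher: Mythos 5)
Your proposal is correct and takes essentially the same route as the paper: both reduce to the algebraic level via Proposition \ref{aprop:equiv:duality:expanded}, dispatch the preprocessing, residuation, splitting and Ackermann rules by reference to the standard arguments, and concentrate the real work on the left-positive approximation rule, whose soundness rests on the join-density of $\mathsf{PsAt}(\mathbb{B}_{\mathsf{RO}})$ (Proposition \ref{aprop:regular:open:join}) together with complete join-preservation of the SLR skeleton, with the other three approximation rules obtained by order-duality. The only nitpick is that complete join-preservation comes from the branch being SLR, not from the pivotality restriction, but this does not affect the argument.
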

\begin{proof}
The proof goes similarly to \cite[Theorem 5.1]{CP:constructive}. Let $\phi_i\leq\psi_i$, $1\leq i\leq n$ denote the inequalities produced by preprocessing $\phi\leq\psi$ after Stage 1, and $(S_{i},\mathsf{Ineq}_{i})$, $1\leq i\leq n$ denote the corresponding quasi-inequalities produced by ALBA after Stage 2. It suffices to show the equivalence from (\ref{aCrct:Eqn0}) to (\ref{aCrct:Eqn6}) given below:
\begin{eqnarray}
&&\mathbb{F}_{\mathsf{RO}}\vDash\phi\leq\psi\label{aCrct:Eqn0}\\
&&\mathbb{B}_{\mathsf{RO}}\vDash\phi\leq\psi\label{aCrct:Eqn1}\\
&&\mathbb{B}_{\mathsf{RO}}\vDash\phi_i\leq\psi_i,\mbox{ for all }1\leq i\leq n\label{aCrct:Eqn2}\\
&&\mathbb{B}_{\mathsf{RO}}\vDash\bigamp\varnothing\Rightarrow\phi_i\leq\psi_i,\mbox{ for all }1\leq i\leq n\label{aCrct:Eqn3}\\
&&\mathbb{B}_{\mathsf{RO}}\vDash\bigamp S_i\Rightarrow\mathsf{Ineq}_i,\mbox{ for all }1\leq i\leq n\label{aCrct:Eqn4}\\
&&\mathbb{B}_{\mathsf{RO}}\vDash\mathsf{ALBA}(\phi\leq\psi)\label{aCrct:Eqn5}\\
&&\mathbb{F}_{\mathsf{RO}}\vDash\mathsf{FO}(\phi\leq\psi)\label{aCrct:Eqn6}
\end{eqnarray}
\begin{itemize}
\item The equivalence between (\ref{aCrct:Eqn0}) and (\ref{aCrct:Eqn1}) follows from Proposition \ref{aprop:equiv:duality:expanded};
\item to show the equivalence of (\ref{aCrct:Eqn1}) and (\ref{aCrct:Eqn2}), it suffices to show the soundness of the rules in Stage 1, which can be proved in the same way as in \cite[Theorem 5.1]{CP:constructive};
\item the equivalence between (\ref{aCrct:Eqn2}) and (\ref{aCrct:Eqn3}) is immediate;
\item the equivalence between (\ref{aCrct:Eqn3}) and (\ref{aCrct:Eqn4}) follows from Propositions \ref{aprop:soundness:left:positive}, \ref{aprop:soundness:approximation}, \ref{aprop:soundness:other:rules} and \ref{aprop:soundness:Ackermann} below;
\item the equivalence between (\ref{aCrct:Eqn4}) and (\ref{aCrct:Eqn5}) is again immediate;
\item the equivalence between (\ref{aCrct:Eqn5}) and (\ref{aCrct:Eqn6}) follows from Propositions \ref{aprop:equiv:duality:expanded} and \ref{aprop:translation}.
\end{itemize}
\end{proof}

Therefore, it remains to show the soundness of Stage 2, for which it suffices to show the soundness of the approximation rules, the residuation rules, the Ackermann rules and the splitting rules. For the residuation rules, the only property needed is the adjunction property, and the proofs are similar to existing settings (see e.g.\ \cite[Lemma 8.4]{CoPa12}), and hence are omitted. For the splitting rules, their soundness are immediate. For the Ackermann rules, since the nominals and propositional variables are interpreted as elements in $\mathbb{B}_{\mathsf{RO}}$, and the basic and black connectives are all interpreted as operations from (products of) $\mathbb{B}_{\mathsf{RO}}$ to $\mathbb{B}_{\mathsf{RO}}$, the ``minimal valuation'' formula is always interpreted as an element in $\mathbb{B}_{\mathsf{RO}}$, therefore the soundness proof of the Ackermann lemmas are the same as \cite[Lemma 6.3, 6.4]{CoPa:non-dist}. The only rules which need special attention are the approximation rules. Since they are order-dual to each other, it suffices to focus on one of these rules.

Let us consider the left-positive approximation rule. Consider a system $(S, \phi(\alpha, \gamma_1, \ldots, \gamma_n)\leq\psi)$ where $\alpha$ is the formula to be approximated, and the regular open dual BAO $\mathbb{B}_{\mathsf{RO}}$ and assignment $\theta$ where the system is interpreted. By Proposition \ref{aprop:regular:open:join}, $\alpha^{(\mathbb{B}_{\mathsf{RO}},\theta)}=\bigvee_{\mathsf{RO}}\{x\in\mathsf{PsAt}(\mathbb{B}_{\mathsf{RO}})\mid x\leq\alpha^{(\mathbb{B}_{\mathsf{RO}},\theta)}\}$.

The soundness of the left-positive approximation rule is justified by the following proposition:

\begin{prop}
Given a regular open dual BAO $\mathbb{B}_{\mathsf{RO}}$ and an assignment $\theta$ on $\mathbb{B}_{\mathsf{RO}}$, $$(\mathbb{B}_{\mathsf{RO}}, \theta)\vDash\bigamp S\Rightarrow \phi(\alpha, \gamma_1, \ldots, \gamma_n)\leq\psi$$
iff for any $x\in\mathsf{PsAt}(\mathbb{B}_{\mathsf{RO}})$, 
$$(\mathbb{B}_{\mathsf{RO}}, \theta'_{x})\vDash\nomj\leq\alpha\ \&\bigamp S\Rightarrow\phi(\nomj, \gamma_1, \ldots, \gamma_n)\leq\psi,$$
where $\theta'_{x}$ is the $\nomj$-variant of $\theta$ such that $\theta'_{x}(\nomj)=x$ and $\theta'_x$ agrees with $\theta$ on other variables.
\end{prop}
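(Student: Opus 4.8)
The plan is to establish the biconditional by proving the two directions separately: the forward implication rests only on monotonicity, whereas the converse is where the join-decomposition supplied by Proposition \ref{aprop:regular:open:join} together with the join-preservation of SLR branches does the real work. Throughout I would write $a := \alpha^{(\mathbb{B}_{\mathsf{RO}}, \theta)}$ and $g_k := \gamma_k^{(\mathbb{B}_{\mathsf{RO}}, \theta)}$, and exploit that the nominal $\nomj$ is fresh, so that for any $x \in \mathsf{PsAt}(\mathbb{B}_{\mathsf{RO}})$ the $\nomj$-variant $\theta'_x$ assigns the same values as $\theta$ to every symbol occurring in $S$, $\alpha$, $\gamma_1, \ldots, \gamma_n$ and $\psi$; in particular $a$ and the $g_k$ are unchanged when passing from $\theta$ to $\theta'_x$.

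For the forward direction, I would fix $x \in \mathsf{PsAt}(\mathbb{B}_{\mathsf{RO}})$ and assume $(\mathbb{B}_{\mathsf{RO}}, \theta'_x) \vDash \nomj \leq \alpha$ and $(\mathbb{B}_{\mathsf{RO}}, \theta'_x) \vDash \bigamp S$. By freshness these premises read $x \leq a$ and $(\mathbb{B}_{\mathsf{RO}}, \theta) \vDash \bigamp S$, so the assumed validity of $\bigamp S \Rightarrow \phi(\alpha, \gamma_1, \ldots, \gamma_n) \leq \psi$ under $\theta$ gives $\phi^{\mathbb{B}_{\mathsf{RO}}}(a, g_1, \ldots, g_n) \leq \psi^{(\mathbb{B}_{\mathsf{RO}}, \theta)}$. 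Since $+x \prec +\phi'(!x)$ along an SLR branch, the term $\phi$ is monotone in the coordinate occupied by $\alpha$; from $x \leq a$ it follows that $\phi^{\mathbb{B}_{\mathsf{RO}}}(x, g_1, \ldots, g_n) \leq \phi^{\mathbb{B}_{\mathsf{RO}}}(a, g_1, \ldots, g_n) \leq \psi^{(\mathbb{B}_{\mathsf{RO}}, \theta)}$, which is exactly $(\mathbb{B}_{\mathsf{RO}}, \theta'_x) \vDash \phi(\nomj, \gamma_1, \ldots, \gamma_n) \leq \psi$ under the stated premises.

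For the converse direction, I would assume the right-hand side for every $x \in \mathsf{PsAt}(\mathbb{B}_{\mathsf{RO}})$, suppose $(\mathbb{B}_{\mathsf{RO}}, \theta) \vDash \bigamp S$, and aim at $\phi^{\mathbb{B}_{\mathsf{RO}}}(a, g_1, \ldots, g_n) \leq \psi^{(\mathbb{B}_{\mathsf{RO}}, \theta)}$. By Proposition \ref{aprop:regular:open:join}, $a = \bigvee_{\mathsf{RO}}\{x \in \mathsf{PsAt}(\mathbb{B}_{\mathsf{RO}}) \mid x \leq a\}$. The crucial step is that, because the branch of $+\phi'(!x)$ starting at $+x$ is SLR, the unary map $b \mapsto \phi^{\mathbb{B}_{\mathsf{RO}}}(b, g_1, \ldots, g_n)$ is completely join-preserving on $\mathbb{B}_{\mathsf{RO}}$, whence $\phi^{\mathbb{B}_{\mathsf{RO}}}(a, g_1, \ldots, g_n) = \bigvee_{\mathsf{RO}}\{\phi^{\mathbb{B}_{\mathsf{RO}}}(x, g_1, \ldots, g_n) \mid x \in \mathsf{PsAt}(\mathbb{B}_{\mathsf{RO}}),\ x \leq a\}$. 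For each such $x$, the variant $\theta'_x$ validates $\nomj \leq \alpha$ (as $x \leq a$) and $\bigamp S$ (by freshness), so the assumed right-hand side yields $\phi^{\mathbb{B}_{\mathsf{RO}}}(x, g_1, \ldots, g_n) \leq \psi^{(\mathbb{B}_{\mathsf{RO}}, \theta)}$; taking the join over all such $x$ delivers the desired inequality.

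The main obstacle, and the only genuinely semantic ingredient, is the join-preservation claim invoked in the converse: that an SLR branch from a positive leaf induces a completely join-preserving map in the distinguished coordinate. I would isolate this as a separate lemma and prove it by induction on the length of the branch, reading off from Table \ref{aJoin:and:Meet:Friendly:Table} that each SLR connective is a left residual (left adjoint) in the relevant coordinate and hence completely join-preserving there — for instance $\Diamond_{\mathsf{RO}}$ is completely join-preserving as the Boolean dual of the completely meet-preserving $\Box_{\mathsf{RO}}$, and $b \mapsto b \wedge_{\mathsf{RO}} c$ preserves joins because it has the right adjoint $c \supset (-)$ — while the sign-flips contributed by $\neg$ and $\to$ compose so that the net map from a $+$ leaf to a $+$ root remains join-preserving. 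Completeness of $\mathbb{B}_{\mathsf{RO}}$ guarantees that all joins appearing in the argument exist, so no existence side-conditions are needed.
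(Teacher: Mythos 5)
Your proposal is correct and uses exactly the same ingredients as the paper's proof: the join-decomposition of $\alpha$ into pseudo-atoms from Proposition \ref{aprop:regular:open:join}, the complete join-preservation of $\phi(\underline{\hspace{2.5mm}}, \gamma_1, \ldots, \gamma_n)$ in the distinguished coordinate guaranteed by the SLR side-condition, and the freshness of $\nomj$. The only difference is presentational --- you argue the two implications separately (using mere monotonicity for the easy direction), whereas the paper runs a single chain of equivalences, and you additionally sketch a proof of the join-preservation fact that the paper simply cites as a consequence of the rule's requirements.
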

\begin{proof}

First of all, by the requirement of the approximation rule, we have that $\phi(\underline{\hspace{2.5mm}}, \gamma_1, \ldots, \gamma_n)$ is completely join-preserving in the empty coordinate. By Proposition \ref{aprop:regular:open:join}, $\alpha^{(\mathbb{B}_{\mathsf{RO}},\theta)}=\bigvee_{\mathsf{RO}}\{x\in\mathsf{PsAt}(\mathbb{B}_{\mathsf{RO}})\mid x\leq\alpha^{(\mathbb{B}_{\mathsf{RO}},\theta)}\}$. By complete distributivity, we have\footnote{For simplicity of notation, we omit the superscript $(\mathbb{B}_{\mathsf{RO}},\theta)$ of formulas and inequalities except for the last but one line; indeed, in the last two lines, the interpretation is in $(\mathbb{B}_{\mathsf{RO}},\theta'_x)$, and in other lines, the interpretation is in $(\mathbb{B}_{\mathsf{RO}},\theta)$.}:
\begin{center}
\begin{tabular}{r l}
& $(\mathbb{B}_{\mathsf{RO}},\theta)\vDash(\bigamp S)^{}\Rightarrow \phi(\alpha, \gamma_1, \ldots, \gamma_n)\leq\psi$\\
iff & $(\bigamp S)\Rightarrow \phi(\bigvee_{\mathsf{RO}}\{x\in\mathsf{PsAt}(\mathbb{B}_{\mathsf{RO}})\mid x\leq\alpha\}, \gamma_1, \ldots, \gamma_n)\leq\psi$\\
iff & $(\bigamp S)\Rightarrow \bigvee_{\mathsf{RO}}\{\phi(x, \gamma_1, \ldots, \gamma_n)\mid x\in\mathsf{PsAt}(\mathbb{B}_{\mathsf{RO}})\mbox{ and }x\leq\alpha\}\leq\psi$\\
iff & $(\bigamp S)\Rightarrow (\forall x\in\mathsf{PsAt}(\mathbb{B}_{\mathsf{RO}})\mbox{ s.t. }x\leq\alpha)(\phi(x, \gamma_1, \ldots, \gamma_n)\leq\psi)$\\
iff & $(\forall x\in\mathsf{PsAt}(\mathbb{B}_{\mathsf{RO}})\mbox{ s.t. }x\leq\alpha)(\bigamp S\Rightarrow\phi(x, \gamma_1, \ldots, \gamma_n)\leq\psi)$\\
iff & $(\forall x\in\mathsf{PsAt}(\mathbb{B}_{\mathsf{RO}}))(x\leq\alpha\&\bigamp S\Rightarrow\phi(x, \gamma_1, \ldots, \gamma_n)\leq\psi)$\\
iff & $(\forall x\in\mathsf{PsAt}(\mathbb{B}_{\mathsf{RO}}))(x\leq\alpha^{(\mathbb{B}_{\mathsf{RO}},\theta'_{x})}\&(\bigamp S)^{(\mathbb{B}_{\mathsf{RO}},\theta'_{x})}\Rightarrow\phi^{(\mathbb{B}_{\mathsf{RO}},\theta'_{x})}(x, \gamma_1^{(\mathbb{B}_{\mathsf{RO}},\theta'_{x})}, \ldots, \gamma_n^{(\mathbb{B}_{\mathsf{RO}},\theta'_{x})})\leq\psi^{(\mathbb{B}_{\mathsf{RO}},\theta'_{x})})$\\
iff & $(\forall x\in\mathsf{PsAt}(\mathbb{B}_{\mathsf{RO}}))((\mathbb{B}_{\mathsf{RO}}, \theta'_{x})\vDash(\nomj\leq\alpha\&\bigamp S\Rightarrow\phi(\nomj, \gamma_1, \ldots, \gamma_n)\leq\psi)).$
\end{tabular}
\end{center}
Notice that $\nomj$ does not occur in $S,\alpha,\phi,\psi,\gamma_1,\ldots,\gamma_n$.
\end{proof}
Notice again that here we interpret the nominals as regular opens closures of atoms in the full dual BAO rather than atoms or complete join-irreducibles since they might not be available in $\mathbb{B}_{\mathsf{RO}}$.

By the proposition above, we have the soundness of the left-positive approximation rule as an easy corollary:
\begin{prop}\label{aprop:soundness:left:positive}
The left-positive approximation rule is sound.
\end{prop}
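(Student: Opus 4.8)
The plan is to obtain the soundness of the rule as an immediate consequence of the pointwise equivalence just established, by universally quantifying over assignments. Recall that soundness of an approximation rule means that, for every regular open dual BAO $\mathbb{B}_{\mathsf{RO}}$, the validity of the premise system is equivalent to the validity of the conclusion system; in the notation of the preceding proposition (where $\alpha$ is the approximated formula and $\nomj$ the fresh nominal) this reads
$$\mathbb{B}_{\mathsf{RO}}\vDash\bigamp S\Rightarrow\phi(\alpha,\gamma_1,\ldots,\gamma_n)\leq\psi \quad\text{iff}\quad \mathbb{B}_{\mathsf{RO}}\vDash\bigamp(S\cup\{\nomj\leq\alpha\})\Rightarrow\phi(\nomj,\gamma_1,\ldots,\gamma_n)\leq\psi.$$

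First I would unfold the left-hand validity into its defining quantification over assignments: it holds precisely when $(\mathbb{B}_{\mathsf{RO}},\theta)\vDash\bigamp S\Rightarrow\phi(\alpha,\gamma_1,\ldots,\gamma_n)\leq\psi$ for every assignment $\theta$. Applying the preceding proposition to each such $\theta$ rewrites this as the condition that for every $\theta$ and every $x\in\mathsf{PsAt}(\mathbb{B}_{\mathsf{RO}})$ we have $(\mathbb{B}_{\mathsf{RO}},\theta'_x)\vDash\nomj\leq\alpha\amp\bigamp S\Rightarrow\phi(\nomj,\gamma_1,\ldots,\gamma_n)\leq\psi$, where $\theta'_x$ is the $\nomj$-variant sending $\nomj$ to $x$. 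The key step, which I expect to be the conceptual crux, is to recognise that this double quantification --- over all assignments $\theta$ and all $x\in\mathsf{PsAt}(\mathbb{B}_{\mathsf{RO}})$ --- is nothing but the single quantification over all admissible assignments of the expanded language. This rests on two facts: that $\nomj$ is fresh, i.e.\ does not occur in $S,\alpha,\phi,\psi,\gamma_1,\ldots,\gamma_n$ (the side condition of the rule), so that only the overriding value $x$ of $\nomj$ is relevant; and --- the genuinely possibility-semantics-specific ingredient --- that the admissible interpretations of a nominal are exactly the elements of $\mathsf{PsAt}(\mathbb{B}_{\mathsf{RO}})$, so that as $\theta$ and $x$ vary independently the pair $(\theta,x)$ enumerates precisely all admissible assignments. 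Hence the condition is equivalent to $\mathbb{B}_{\mathsf{RO}}\vDash\nomj\leq\alpha\amp\bigamp S\Rightarrow\phi(\nomj,\gamma_1,\ldots,\gamma_n)\leq\psi$, which is the validity of the conclusion system, completing the equivalence.

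The main obstacle here is not computational but a matter of quantifier bookkeeping together with the correct reading of nominals: one must be sure that interpreting nominals over $\mathsf{PsAt}(\mathbb{B}_{\mathsf{RO}})$ (rather than over atoms or complete join-irreducibles, which need not exist in $\mathbb{B}_{\mathsf{RO}}$) is exactly what licenses the passage from pointwise to uniform validity. All the order-theoretic content --- in particular the complete join-preservation of $\phi(\,\underline{\hphantom{x}}\,,\gamma_1,\ldots,\gamma_n)$ in the approximated coordinate, which underwrites distributing $\phi$ over the $\mathsf{PsAt}(\mathbb{B}_{\mathsf{RO}})$-representation of $\alpha$ --- is already packaged in the preceding proposition, so at this stage it only remains to invoke the freshness side condition and manage the quantifiers. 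The remaining three approximation rules (left-negative, right-positive, right-negative) are order-dual to this one and follow by the identical argument, as already noted in the text.
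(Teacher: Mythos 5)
Your argument is correct and is essentially the paper's own: the paper derives this proposition as an ``easy corollary'' of the immediately preceding pointwise equivalence, exactly by the quantifier bookkeeping you spell out (ranging over all assignments $\theta$ and all $x\in\mathsf{PsAt}(\mathbb{B}_{\mathsf{RO}})$ is the same as ranging over all assignments of the expanded language, given that $\nomj$ is fresh and nominals are interpreted in $\mathsf{PsAt}(\mathbb{B}_{\mathsf{RO}})$). You merely make explicit what the paper leaves implicit, and correctly locate all the order-theoretic content in the preceding proposition.
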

By order-dual arguments, the other three approximation rules are sound:
\begin{prop}\label{aprop:soundness:approximation}
The left-negative approximation rule, the right-positive approximation rule and the right-negative approximation rule are sound.
\end{prop}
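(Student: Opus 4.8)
The plan is to obtain the three remaining rules as order-duals of the left-positive rule, re-running the chain of equivalences established in the proposition preceding Proposition~\ref{aprop:soundness:left:positive} with joins and meets (and pseudo-atoms and co-pseudo-atoms) interchanged as appropriate. That argument rested on exactly two facts: (i) the pivotal SLR side condition forces the one-hole context $\phi'(\underline{\hspace{2.5mm}},\gamma_1,\ldots,\gamma_n)$ to be completely join-preserving in its empty coordinate, so that $\phi'$ distributes over the join $\bigvee_{\mathsf{RO}}\{x\in\mathsf{PsAt}(\mathbb{B}_{\mathsf{RO}})\mid x\leq\alpha\}$ supplied by Proposition~\ref{aprop:regular:open:join}; and (ii) the interpretation $\theta(\nomi)\in\mathsf{PsAt}(\mathbb{B}_{\mathsf{RO}})$ matches the quantifier over pseudo-atoms produced by this distribution. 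For each dual rule I would keep this skeleton fixed and only change which preservation property and which density statement are invoked.

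First I would record the order-dual of Proposition~\ref{aprop:regular:open:join}. Since $-_{\mathsf{RO}}$ is an order-reversing bijection of the complete Boolean algebra $\mathbb{B}_{\mathsf{RO}}$, applying it to the join-density statement gives, for every $a\in\mathbb{B}_{\mathsf{RO}}$,
\[ a=\bigwedge_{\mathsf{RO}}\{-_{\mathsf{RO}}x\mid x\in\mathsf{PsAt}(\mathbb{B}_{\mathsf{RO}}),\ a\leq -_{\mathsf{RO}}x\}, \]
so that the co-pseudo-atoms $\{-_{\mathsf{RO}}x\mid x\in\mathsf{PsAt}(\mathbb{B}_{\mathsf{RO}})\}$ are meet-dense; these are precisely the values available to $\neg\nomi$ under an admissible assignment. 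With this in hand the three cases run as follows. For the \emph{left-negative} rule the occurrence $-x\prec+\phi'$ makes $\phi'$ antitone in the hole, and the SLR condition upgrades this to sending meets to joins; writing the value of the approximated term as the meet of the co-pseudo-atoms above it and distributing $\phi'$ over it reduces $\phi'(\neg\nomi)\leq\psi'$ to the quasi-inequality with side condition $\gamma\leq\neg\nomi$. For the \emph{right-positive} rule, acting on $-\psi'$ with $+x$ at the leaf, the SLR condition makes the context $\psi'$ send joins to meets, so join-density (Proposition~\ref{aprop:regular:open:join}) turns $\phi'\leq\psi'(\gamma)$ into a meet, splitting it into a quantification over pseudo-atoms $\nomi\leq\gamma$. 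For the \emph{right-negative} rule, with $-x\prec-\psi'$, the context $\psi'$ becomes completely meet-preserving, and meet-density of the co-pseudo-atoms splits $\phi'\leq\psi'(\gamma)$ into the quantification with side condition $\gamma\leq\neg\nomi$.

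The step I expect to be the main obstacle is the bridge from the syntactic SLR condition to the genuine complete (co-)preservation property in $\mathbb{B}_{\mathsf{RO}}$, because the join and negation of $\mathbb{B}_{\mathsf{RO}}$ are not the set-theoretic ones but $\mathsf{int}(\mathsf{cl}(\,\cdot\cup\cdot\,))$ and $\mathsf{int}(W\setminus\cdot)$. This is ultimately harmless: $\mathbb{B}_{\mathsf{RO}}$ is a complete Boolean algebra (Proposition~\ref{afacts:regular:open}) on which $\Box_{\mathsf{RO}}$ is completely multiplicative and $\Diamondblack_{\mathsf{RO}}$ is its left adjoint, so every node licensed as SLR is interpreted by a completely join-preserving operation (a residual or left adjoint) in the appropriate order-type, and the composition along a maximal SLR branch is again completely join-preserving; dualizing through $-_{\mathsf{RO}}$, which is a dual automorphism, yields the meet-preserving and meet-to-join variants used above. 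Once this is checked the three chains of equivalences are literally the order-duals of the left-positive one, and the soundness of all three approximation rules follows exactly as Proposition~\ref{aprop:soundness:left:positive} did.
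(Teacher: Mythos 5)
Your proposal is correct and takes essentially the same route as the paper, which disposes of these three rules in a single line as ``order-dual arguments'' applied to the left-positive case (Proposition~\ref{aprop:soundness:left:positive}). The details you supply---meet-density of the negated pseudo-atoms obtained by dualizing Proposition~\ref{aprop:regular:open:join} through $-_{\mathsf{RO}}$, and the complete join-to-join, meet-to-join, join-to-meet and meet-to-meet preservation properties that the SLR side conditions guarantee in each of the three cases---are precisely what the paper leaves implicit.
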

Following from standard soundness proof of ALBA \cite[Lemma 8.3, 8.4]{CoPa12}, we have the following:
\begin{prop}\label{aprop:soundness:other:rules}
The distribution rules, the splitting rules, the monotone and antitone rules and the residuation rules are sound.
\end{prop}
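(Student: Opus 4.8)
The plan is to observe that the soundness of each of these rules amounts to a purely order-theoretic/algebraic equivalence holding in the regular open dual BAO $\mathbb{B}_{\mathsf{RO}}$ (and, for the filter-descriptive case, in the hybrid dual BAO $(\mathbb{B}_{\mathsf{RO}}, \mathsf{P})$) under an arbitrary assignment. Unlike the approximation rules, none of these rules touches the interpretation of nominals; they merely rewrite (sub)formulas using identities and adjunctions among the connectives. Hence soundness reduces to checking that the requisite identities are valid in $\mathbb{B}_{\mathsf{RO}}$, which -- being a complete Boolean algebra (Proposition \ref{afacts:regular:open}(9)) and a $\mathcal{CV}$-BAO -- it is. I would treat the four groups of rules in turn.

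For the splitting rules, soundness is just the universal property of meets and joins: $\alpha \leq \beta \wedge \gamma$ iff $\alpha \leq \beta$ and $\alpha \leq \gamma$, and dually $\alpha \vee \beta \leq \gamma$ iff $\alpha \leq \gamma$ and $\beta \leq \gamma$, which hold because $\wedge_{\mathsf{RO}}$ and $\vee_{\mathsf{RO}}$ are the meet and join in $\mathbb{B}_{\mathsf{RO}}$. For the monotone and antitone variable-elimination rules, I would use the polarity hypotheses: with $\alpha(p)$ antitone and $\beta(p)$ monotone, one has $\alpha(p) \leq \alpha(\bot)$ and $\beta(\bot) \leq \beta(p)$ for every value of $p$, so that $\alpha(p) \leq \beta(p)$ holds for all $p$ iff it holds at the single instance $p = \bot$; the second rule is the order-dual statement with $p = \top$.

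For the distribution rules, soundness rests on the fact that the skeleton operators being pushed down are completely join-preserving in the relevant coordinate, so that they distribute over $\vee_{\mathsf{RO}}$: concretely $\Diamond(a \vee b) = \Diamond a \vee \Diamond b$ (here I use that $\Box_{\mathsf{RO}}$ is completely meet-preserving, since $\mathbb{B}_{\mathsf{RO}}$ is a $\mathcal{CV}$-BAO, whence its Boolean dual $\Diamond$ is completely join-preserving), the lattice distributivity $a \wedge (b \vee c) = (a \wedge b) \vee (a \wedge c)$, and the De Morgan / antitonicity laws $\neg(a \vee b) = \neg a \wedge \neg b$ and $(a \vee b) \to c = (a \to c) \wedge (b \to c)$ governing the negative $\neg$- and $\to$-nodes; the cases in clause (1b) are the order-duals, distributing over $\wedge_{\mathsf{RO}}$. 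For the residuation rules, I would invoke the adjunctions already established in Section \ref{aSec:algebraic:analysis}: $\Diamondblack_{\mathsf{RO}}$ is the left adjoint of $\Box_{\mathsf{RO}}$ (Lemma \ref{alem:Diamond:RO}) and $\blacksquare$ is the right adjoint of $\Diamond$ (the proposition following Proposition \ref{aprop:semantic:condition:black}), giving $(\Diamond\text{-Res})$ and $(\Box\text{-Res})$; the Boolean residuation $a \wedge b \leq c \Leftrightarrow a \leq b \to c$ and its consequences give the $\wedge$-, $\vee$- and $\to$-residuation rules; and the Boolean law $\neg a \leq b \Leftrightarrow \neg b \leq a$ gives the two $\neg$-residuation rules.

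The part demanding the most care -- though it is ultimately routine once the preliminaries are in place -- is the distribution clause, because the relevant joins and complements in $\mathbb{B}_{\mathsf{RO}}$ are the regular-open operations $\vee_{\mathsf{RO}} = \mathsf{int}(\mathsf{cl}(\,\cdot\,\cup\,\cdot\,))$ and $-_{\mathsf{RO}} = \mathsf{int}(W \setminus \cdot)$ rather than the set-theoretic ones, and $\Diamond$ is $\mathsf{int}(R^{-1}[\mathsf{cl}(\cdot)])$ rather than the ordinary diamond. One must therefore ensure that complete join-preservation of $\Diamond$ and lattice distributivity are asserted inside $\mathbb{B}_{\mathsf{RO}}$; but this is exactly the content of $\mathbb{B}_{\mathsf{RO}}$ being a $\mathcal{CV}$-BAO together with Proposition \ref{afacts:regular:open}(9). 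Once this is granted, every identity used is an identity of $\mathbb{B}_{\mathsf{RO}}$ as an abstract BAO, so the soundness arguments coincide verbatim with the standard ones of \cite[Lemma 8.3, 8.4]{CoPa12}, and no feature peculiar to possibility semantics intervenes.
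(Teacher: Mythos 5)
Your proposal is correct and takes essentially the same route as the paper, which simply defers to the standard soundness proofs of \cite[Lemma 8.3, 8.4]{CoPa12} on the grounds that these rules only use order-theoretic identities (meet/join universal properties, monotonicity, complete join-preservation of the skeleton connectives, Boolean and adjunction laws) that hold in $\mathbb{B}_{\mathsf{RO}}$ as a complete $\mathcal{CV}$-BAO. You spell out explicitly the point the paper leaves implicit, namely that none of these rules interacts with the nonstandard interpretation of nominals, so nothing peculiar to possibility semantics intervenes.
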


For the Ackermann rules, the proof is also similar. It is worth mentioning that all the formulas in the expanded modal language are interpreted in $\mathbb{B}_{\mathsf{RO}}$, as well as that we are working at the discrete duality level, therefore the topological Ackermann lemmas (cf.\ \cite[Lemma 9.3, 9.4]{CoPa12}) are not needed in the soundness proof here. 
\begin{prop}\label{aprop:soundness:Ackermann}
The Ackermann rules are sound.
\end{prop}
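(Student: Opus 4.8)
The plan is to reduce the soundness of both Ackermann rules to the purely algebraic (discrete) Ackermann lemma, exactly as in \cite[Lemma 6.3, 6.4]{CoPa:non-dist}. Since the two rules are order-dual, I would treat only the Right Ackermann-rule in detail and obtain the Left one by dualising. Fix a regular open dual BAO $\mathbb{B}_{\mathsf{RO}}$ and recall that it is complete, so that for any assignment $\theta$ the join $\bigvee_{\mathsf{RO}}\{\alpha_i^{(\mathbb{B}_{\mathsf{RO}},\theta)}\mid 1\le i\le n\}$ exists. The decisive observation is that all the relevant terms --- including those built from nominals and the black connectives --- are interpreted by operations with codomain $\mathbb{B}_{\mathsf{RO}}$, so the ``minimal valuation'' $\bigvee_i\alpha_i$ is interpreted as a genuine element of $\mathbb{B}_{\mathsf{RO}}$. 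This is what makes the discrete lemma applicable and, as noted in the text, is precisely why the topological Ackermann lemmas (cf.\ \cite[Lemma 9.3, 9.4]{CoPa12}) can be avoided here.

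Concretely, I would show that for every assignment the quasi-inequality $\bigamp(\{\alpha_i\le p\}\cup\{\beta_j(p)\le\gamma_j(p)\})\Rightarrow\mathsf{Ineq}$ is valid on $\mathbb{B}_{\mathsf{RO}}$ iff $\bigamp\{\beta_j(\bigvee_i\alpha_i)\le\gamma_j(\bigvee_i\alpha_i)\}\Rightarrow\mathsf{Ineq}$ is. For the direction from the original system to the transformed one, fix $\theta$ satisfying $\alpha_i\le p$ and $\beta_j(p)\le\gamma_j(p)$ for all $i,j$; writing $a=p^{(\mathbb{B}_{\mathsf{RO}},\theta)}$ and $a_0=\bigvee_i\alpha_i^{(\mathbb{B}_{\mathsf{RO}},\theta)}$ (well-defined since $p$ does not occur in the $\alpha_i$), the premises give $a_0\le a$, and then monotonicity of $\beta_j$ together with antitonicity of $\gamma_j$ in $p$ yields $\beta_j(a_0)\le\beta_j(a)\le\gamma_j(a)\le\gamma_j(a_0)$, so the transformed premises hold and $\mathsf{Ineq}$ follows. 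For the converse direction, given $\theta^-$ on all variables but $p$ satisfying the transformed premises, I would take the $p$-variant $\theta$ sending $p$ to $a_0$; then $\alpha_i\le p$ holds trivially and $\beta_j(p)\le\gamma_j(p)$ reduces to the hypothesis, so $\mathsf{Ineq}$ holds under $\theta$, and since $p$ does not occur in $\mathsf{Ineq}$ its value is unchanged under $\theta^-$.

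The only points requiring care --- and the closest thing to an obstacle --- are the monotonicity facts: that $\beta_j(p)$ being positive in $p$ makes its interpretation monotone in the $p$-coordinate, and that $\gamma_j(p)$ being negative in $p$ makes it antitone. These follow by a routine induction on term structure from the order-theoretic behaviour of the connectives of the expanded language under the $\mathbb{B}_{\mathsf{RO}}$-interpretation ($\wedge_{\mathsf{RO}},\vee_{\mathsf{RO}},\Box_{\mathsf{RO}},\Diamondblack_{\mathsf{RO}},-_{\mathsf{RO}},\to$), where negation and the antecedent of an implication flip the order. Once these are in place the argument is identical to the classical Boolean-algebras-with-operators setting; completeness of $\mathbb{B}_{\mathsf{RO}}$ supplies the joins needed for the minimal valuation, and neither atomicity nor any topological approximation is used.
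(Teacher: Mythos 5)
Your proposal is correct and follows essentially the same route as the paper, which likewise reduces the soundness of the Ackermann rules to the discrete (non-topological) Ackermann lemmas of \cite[Lemma 6.3, 6.4]{CoPa:non-dist}, observing that all terms of the expanded language, and hence the minimal valuation $\bigvee_{\mathsf{RO}}\{\alpha_i\}$, are interpreted as genuine elements of the complete algebra $\mathbb{B}_{\mathsf{RO}}$, so that neither atomicity nor the topological Ackermann lemmas are needed. The two-direction argument you spell out (instantiating $p$ at the minimal valuation in one direction, and using monotonicity/antitonicity of $\beta_j$, $\gamma_j$ in the other) is exactly the content of the paper's Lemmas \ref{aRight:Ack:discrete} and \ref{aLeft:Ack:discrete}.
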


\section{Success}\label{aSec:success}
The proof that ALBA succeeds on inductive inequalities goes similarly to \cite{CoPa:non-dist}, therefore in what follows we will only sketch its main lines without giving details, and refer the reader to \cite{CoPa:non-dist} for an exhaustive treatment.

The following definition is meant to describe the shape of the inequality after Stage 1:

\begin{definition}
An $(\Omega, \epsilon)$-inductive inequality is called {\em definite} if all Skeleton nodes are SLR nodes in its critical branches.
\end{definition}

\begin{lemma}\label{aPre:Process:Lemma}
Let $\{\phi_i\leq\psi_i\}$ be the set of inequalities obtained after preprocessing an $(\Omega, \epsilon)$-inductive inequality $\phi\leq\psi$. Then each $\phi_i\leq\psi_i$ is a definite $(\Omega,\epsilon)$-inductive inequality.
\end{lemma}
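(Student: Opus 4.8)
The plan is to argue entirely at the level of the signed generation trees $+\phi$ and $-\psi$, reducing the statement to a single combinatorial fact about where $+\lor$ and $-\land$ nodes can sit on a critical branch. First I would record the precise content of definiteness: among the four $\Delta$-adjoint node types $+\lor,\,+\land,\,-\land,\,-\lor$ (Table~\ref{aJoin:and:Meet:Friendly:Table}), the nodes $+\land$ and $-\lor$ are \emph{also} SLR and hence admissible as Skeleton nodes, whereas $+\lor$ and $-\land$ are the only $\Delta$-adjoints that are not SLR. Therefore an $(\Omega,\epsilon)$-inductive inequality is definite precisely when no $+\lor$ and no $-\land$ occurs as a Skeleton node on any $\epsilon$-critical branch, and the whole task becomes: expel every such Skeleton occurrence of $+\lor,-\land$ using Stage~1.

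The two things I would then establish are (i) that every Stage~1 rule sends an $(\Omega,\epsilon)$-inductive inequality to a finite set of $(\Omega,\epsilon)$-inductive inequalities, and (ii) that exhaustive application of the distribution and splitting rules removes all Skeleton occurrences of $+\lor,-\land$ from critical branches. For (i) I would check the rules one at a time on the signed tree. The monotone/antitone elimination rules substitute $\bot$ or $\top$ for a uniformly occurring---hence non-$\epsilon$-critical---variable and then simplify; this leaves every $\epsilon$-critical leaf, the PIA/Skeleton partition of its branch, the $\epsilon^\partial$-agreement of the side formulas, and the order $<_\Omega$ untouched (constants carry no variables). The splitting rules separate the two immediate subtrees of a top $+\lor$ in $+\phi$ (resp.\ $-\land$ in $-\psi$), copying each critical branch verbatim. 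The distribution rules permute a Skeleton connective with a $+\lor$/$-\land$ node immediately below it, toggling $+\lor\leftrightarrow-\land$ across a $\neg$ or across the antecedent/consequent of a $\to$, and possibly duplicating a side subtree; since leaf signs, the segmentation of each critical branch, and the variables of side formulas are all preserved, inductivity (Definition~\ref{aInducive:Ineq:Def}) is inherited by the output.

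For (ii) the key lemma I would prove is: on a critical branch, the immediate parent of a Skeleton $+\lor$ node is always a connective handled by a distribution rule. Reading off the admissible signs, the Skeleton parents of a $+\lor$ are exactly $+\land,+\Diamond,-\neg,-\to$ (or another $+\lor$, with which it merges), and dually the Skeleton parents of a $-\land$ are $-\lor,-\Box,+\neg,-\to$ (or another $-\land$)---and these are precisely the connectives listed in the two distribution clauses. Crucially, repeated crossing of $\neg$ and of $\to$ keeps the migrating node inside the set $\{+\lor,-\land\}$ (it never becomes the harmless $+\land$ or $-\lor$), so it can be pushed strictly upward until it becomes the root connective of $+\phi$ or of $-\psi$, where its sign forces it to be $+\lor$ (resp.\ $-\land$) and the matching splitting rule deletes it. Iterating and invoking a termination argument, after exhaustive preprocessing no $+\lor$ or $-\land$ survives as a Skeleton node on a critical branch; together with (i) this yields definite $(\Omega,\epsilon)$-inductive inequalities.

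The main obstacle I expect is the bookkeeping for the distribution rules in (i), in particular the survival of the SRR side-conditions of Definition~\ref{aInducive:Ineq:Def}. When a binary rule distributes over $+\lor$/$-\land$ it duplicates a side subtree $\gamma$ (e.g.\ the consequent of a $-\to$), so I must verify that both copies still satisfy $\epsilon^\partial(\gamma)\prec\ast\phi$ and $p_k<_\Omega p_i$ for the relevant critical variable $p_i$, and that no other critical branch running through $\gamma$ is damaged by the rearrangement; this reduces to the observation that duplication preserves signs and the variable set, but it is the most delicate point. A secondary issue is termination, since duplication can increase the count of $+\lor,-\land$ nodes: I would justify it by the standard distributivity/normal-form measure for the Skeleton layer rather than by a naive node count, exactly as in \cite{CoPa:non-dist}.
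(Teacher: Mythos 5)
The paper gives no proof of this lemma at all---it explicitly states that it only sketches the main lines of the success argument and defers to \cite{CoPa:non-dist}---and your argument is a correct reconstruction of the intended standard one: identify definiteness with the absence of Skeleton $+\lor$/$-\land$ nodes on critical branches, check that each Stage~1 rule preserves $(\Omega,\epsilon)$-inductivity, and use the sign analysis to show that every such node migrates upward (staying within $\{+\lor,-\land\}$ across $\neg$ and $\to$) until the root, where the matching splitting rule removes it. Your enumeration of the admissible Skeleton parents of $+\lor$ and of $-\land$ matches exactly the connectives listed in the two distribution clauses, and you correctly isolate the only two genuinely delicate points, namely the survival of the SRR side conditions under duplication of side subtrees and termination despite that duplication, so the proposal is sound as it stands.
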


The following definition describes the shape of a system after approximation rules have been exhaustively applied pivotally until no propositional variables remain in $\mathsf{Ineq}$:

\begin{definition}\label{aStripped:Def}
A system $(S, \mathsf{Ineq})$ is called \emph{$(\Omega, \epsilon)$-stripped} if $\mathsf{Ineq}$ is pure and for each $\xi \leq \chi \in S$, the following conditions hold:
\begin{enumerate}
\item one of $+\chi$ and $-\xi$ is pure, and the other is $(\Omega, \epsilon)$-inductive;
\item every $\epsilon$-critical branch in $+\chi$ and $-\xi$ is PIA.
\end{enumerate}
\end{definition}

\begin{lemma}\label{aStripping:Lemma}
For any definite $(\Omega, \epsilon)$-inductive inequality $\phi \leq \psi$, the system $(\emptyset, \phi \leq \psi)$ can be rewritten into an $(\Omega, \epsilon)$-stripped system by the application of approximation rules.
\end{lemma}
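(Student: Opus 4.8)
The plan is to prove Lemma \ref{aStripping:Lemma} by showing that pivotal, exhaustive application of the approximation rules to a definite $(\Omega,\epsilon)$-inductive inequality terminates in a stripped system, and that each application preserves the required inductive/PIA shape invariants. By Lemma \ref{aPre:Process:Lemma} we may assume the input $\phi\leq\psi$ is definite $(\Omega,\epsilon)$-inductive, meaning that along each $\epsilon$-critical branch the Skeleton part $P_2$ consists only of SLR nodes (not merely $\Delta$-adjoints), while the PIA part $P_1$ sits below it toward the leaf. The key structural observation is that in the initial system $(\emptyset,\phi\leq\psi)$, reading the critical branch of $+\phi$ (or $-\psi$) from the root downward, one first encounters a maximal run of SLR nodes and then the PIA portion; the approximation rules are designed precisely to peel off this SLR run from the top.

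First I would set up the invariant that characterizes an ``intermediate'' system during Stage 2: $\mathsf{Ineq}$ is the input inequality with some critical subterms replaced by nominals (or negated nominals) $\nomi/\neg\nomi$, and each inequality already placed in $S$ has the form $\xi\leq\chi$ where one side is pure and the critical branch on the other side is what remains of an original critical branch after its top SLR-segment has been removed. Then I would argue that a single pivotal application of, say, the left-positive approximation rule $(L^+A)$ acts on a maximal SLR branch of $+\phi'(!x)$: it replaces the SLR-node's critical argument $\gamma$ by a fresh nominal $\nomi$ in $\mathsf{Ineq}$ and adds $\nomi\leq\gamma$ to $S$. Because the branch was a \emph{maximal} SLR branch, the displayed subterm $\gamma$ begins (apart from variable nodes) with a PIA node; hence in the new inequality $\nomi\leq\gamma$ the critical branch lying in $+\gamma$ (appearing now as $-\gamma$ after residuation/positioning, giving $+\chi$ or $-\xi$ as required) is entirely PIA. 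The side conditions of Definition \ref{adef: signed gen tree} and the $(\Omega,\epsilon)$-inductive condition (clause 2, controlling SRR nodes and the $\prec$-order on side-formulas $\gamma$ of $\bigstar(\gamma,\beta)$) guarantee that the non-critical material $\gamma_1,\dots,\gamma_n$ hanging off the SLR skeleton is $\epsilon^\partial$-agreeing and hence pure after the propositional variables below any lower-ranked critical branches are in turn solved for; this is what secures condition 1 of Definition \ref{aStripped:Def}.

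The core of the argument is therefore a double induction: an outer induction on the $<_\Omega$-dependence order (processing the maximal variables first, so that when a critical branch for $p_i$ is stripped, every $\gamma$ attached to its SRR nodes mentions only strictly $<_\Omega$-smaller variables that have already been eliminated and turned pure), and an inner induction on the length of the maximal SLR skeleton $P_2$ along the current critical branch. At each inner step, one pivotal approximation application shortens $P_2$ by its topmost node and transfers the corresponding argument into $S$; when $P_2$ is exhausted, what is left in $\mathsf{Ineq}$ along that branch is the PIA segment $P_1$, which the approximation rules do not touch, and $\mathsf{Ineq}$ itself becomes pure once all critical branches have been so processed. I would check case-by-case (following \cite{CoPa:non-dist}) that each of the four approximation rules, when applied pivotally to a top SLR node of the appropriate sign, both matches the side conditions (correct polarity, SLR branch, $\gamma$ in the basic modal language) and yields an inequality in $S$ whose critical branch is PIA, thereby maintaining the invariant.

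The main obstacle I anticipate is verifying clause 1 of Definition \ref{aStripped:Def}---that one side of each newly added $\xi\leq\chi$ is \emph{pure} while the other is $(\Omega,\epsilon)$-inductive with a PIA critical branch---in the presence of several interacting critical branches and shared non-critical subformulas. The subtlety is that a subformula $\gamma$ detached by an approximation step may itself contain further $\epsilon$-critical occurrences of lower-ranked variables; ensuring these are already pure requires the outer $<_\Omega$-induction to be correctly interleaved with the inner stripping induction, and requires appealing carefully to the $(\Omega,\epsilon)$-inductive condition that every SRR node $\bigstar(\gamma,\beta)$ on a critical branch satisfies $\epsilon^\partial(\gamma)\prec\ast\phi$ and $p_k<_\Omega p_i$ for all $p_k$ in $\gamma$. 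Managing this bookkeeping---rather than any single rule application---is where the real work lies, and it is precisely the point at which one must lean on the analogous treatment in \cite{CoPa:non-dist}, since the possibility-semantics interpretation of nominals does not affect the purely syntactic shape-tracking carried out here.
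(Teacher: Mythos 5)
The paper itself gives no proof of this lemma: Section \ref{aSec:success} only states it and defers wholesale to the treatment in \cite{CoPa:non-dist}. Your core idea --- exhaustively apply the approximation rules pivotally, peel off the maximal SLR prefix of each branch, and use definiteness (all Skeleton nodes on critical branches are SLR) to conclude that the subformula $\gamma$ transferred into $S$ has only PIA nodes on its critical branches --- is the right one, and your inner induction on the length of the SLR prefix is exactly how the argument should go.

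There is, however, a genuine error in how you argue for condition 1 of Definition \ref{aStripped:Def}. You claim that the SRR side-formulas $\gamma$ of $\bigstar(\gamma,\beta)$ must become \emph{pure}, and that this forces an outer induction on $<_\Omega$ interleaved with the stripping. This conflates the stripping stage with the Ackermann elimination stage. Condition 1 only asks that \emph{one} of $+\chi$ and $-\xi$ be pure; for an inequality $\nomi\leq\gamma$ produced by $(L^+A)$ the pure side is simply the nominal $-\nomi$, while the other side need only be $(\Omega,\epsilon)$-inductive --- which $+\gamma$ is automatically, being a positively signed subtree of the inductive $+\phi'$ whose remaining critical branches are entirely PIA by pivotality and definiteness. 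No propositional variable is eliminated during stripping, so no appeal to variables having been ``solved for'' is available or needed; the dependence order $<_\Omega$ only enters later, in Lemmas \ref{aAckermann:Ready:LEmma} and \ref{aApp:Of:Ackermann:Lemma}. Moreover, the order you state is both backwards and internally inconsistent: you propose processing $<_\Omega$-\emph{maximal} variables first while simultaneously assuming that the $<_\Omega$-smaller variables occurring in the SRR side-formulas ``have already been eliminated''; the elimination order that actually works (Lemma \ref{aAckermann:Ready:LEmma}) is $\Omega$-\emph{minimal} first, precisely so that the minimal valuations are pure when they are formed. Delete the outer $<_\Omega$-induction and the purity claim for the side-formulas, and what remains is a correct proof of the lemma.
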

The next definition describes the shape of a system after exhaustively applying reduction rules other than the Ackermann rules:
\begin{definition}\label{aAckermann:Ready:Def}
An $(\Omega, \epsilon)$-stripped system $(S, \mathsf{Ineq})$ is \emph{Ackermann-ready} with respect to $p_i$ with $\epsilon_i = 1$ (resp.\ $\epsilon_i = \partial$) if every $\xi \leq \chi \in S$ has one of the following forms:
\begin{enumerate}
\item $\xi \leq p_i$ with $\xi$ pure (resp.\ $p_i \leq \chi$ with $\chi$ pure), or
\item $\xi \leq \chi$ where $- \xi$ or $+ \chi$ are $\epsilon^{\partial}$-uniform in $p_i$.
\end{enumerate}
\end{definition}
Notice that if a system is Ackermann-ready with respect to $p_i$, then the right or left Ackermann-rule (depending on whether $\epsilon_i = 1$ or $\epsilon_i = \partial$) is applicable.

The next lemma states that an $(\Omega, \epsilon)$-stripped system can be equivalently transformed into one in which the $\Omega$-minimal variables can be eliminated:
\begin{lemma}\label{aAckermann:Ready:LEmma}
If $(S, \mathsf{Ineq})$ is $(\Omega, \epsilon)$-stripped and $p_i$ is $\Omega$-minimal among the remaining propositional variables occurring in $(S, \mathsf{Ineq})$, then $(S, \mathsf{Ineq})$ can be rewritten by the application of residuation and splitting rules into a system which is \emph{Ackermann-ready} with respect to $p_i$.
\end{lemma}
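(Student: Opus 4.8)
The plan is to treat each inequality $\xi \leq \chi$ in $S$ on its own, descend the $\epsilon$-critical branch whose leaf is $p_i$ from the root towards that leaf, and at every node peel off the outermost connective by applying the residuation or splitting rule matching the syntactic type of the node recorded in Table \ref{aJoin:and:Meet:Friendly:Table}. First I would separate the members of $S$ according to whether $p_i$ occurs $\epsilon$-critically in them. By Definition \ref{aStripped:Def}, for each $\xi \leq \chi \in S$ exactly one of $-\xi$, $+\chi$ is pure and the other is $(\Omega,\epsilon)$-inductive with every $\epsilon$-critical branch a PIA branch; consequently every $\epsilon$-critical occurrence of $p_i$ sits on the non-pure side along a PIA branch. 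Recall that an occurrence of $p_i$ is $\epsilon$-critical precisely when it carries the sign prescribed by $\epsilon$, so any occurrence that is not $\epsilon$-critical carries the sign prescribed by $\epsilon^\partial$. Hence, for an inequality in which $p_i$ never occurs $\epsilon$-critically, the non-pure side is automatically $\epsilon^\partial$-uniform in $p_i$, which is exactly the second form in Definition \ref{aAckermann:Ready:Def}, and nothing needs to be done.

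For an inequality in which $p_i$ does occur $\epsilon$-critically, I would process the PIA critical branch node by node. At the unary SRA nodes $+\Box$, $-\Diamond$ and $\pm\neg$ I would transport the connective across by the corresponding adjunction rule ($\Box$-Res, $\Diamond$-Res, $\neg$-Res-Left/Right), which applies it only to the pure side. At the binary SRR nodes $+\vee$, $+\to$ and $-\wedge$ I would use the matching residuation rule ($\lor$-Res, $\to$-Res, $\land$-Res), and at the binary SRA nodes $+\wedge$ and $-\vee$ I would use the splitting rule, which spawns a second inequality for the off-branch child. The crucial use of the hypothesis occurs at the binary SRR nodes: by clause 2(b) of Definition \ref{aInducive:Ineq:Def}, the side formula $\gamma$ attached at such a node satisfies $p_k <_\Omega p_i$ for every propositional variable $p_k$ in $\gamma$; since $p_i$ is $\Omega$-minimal among the variables still present, $\gamma$ can contain no propositional variable at all, i.e.\ $\gamma$ is pure. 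Residuating this pure $\gamma$ across therefore leaves the target (pure) side pure, and the unary adjunction steps likewise only combine pure material under the Boolean and black connectives; so purity of the non-critical side is preserved throughout.

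Iterating along the branch thus terminates with an inequality of the form $\xi' \leq p_i$ when $\epsilon_i = 1$ (or $p_i \leq \chi'$ when $\epsilon_i = \partial$) with $\xi'$ (resp.\ $\chi'$) pure — precisely the first form of Definition \ref{aAckermann:Ready:Def}. The inequalities spawned by the splitting rule inherit a pure side and a PIA-inductive side, hence are again $(\Omega,\epsilon)$-stripped; those that still carry $p_i$ $\epsilon$-critically are processed recursively, and those that do not fall under the second form by the sign argument of the first paragraph. Since the residuation and splitting rules are equivalence-preserving (Proposition \ref{aprop:soundness:other:rules}), the resulting system is equivalent to the original one and is Ackermann-ready with respect to $p_i$, so the right or left Ackermann-rule (according to whether $\epsilon_i = 1$ or $\epsilon_i = \partial$) becomes applicable.

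The main obstacle, and the point that must be handled with care, is exactly the behaviour at the binary SRR nodes: one has to confirm that $\Omega$-minimality forces each side formula there to be pure, for otherwise residuation would deposit a formula containing further propositional variables on the target side and break Ackermann-readiness. A secondary bookkeeping point is to verify that each inequality produced by the splitting rule still satisfies both clauses of Definition \ref{aStripped:Def}, so that the recursion is well-founded and every branch of the process lands in one of the two admissible forms; this holds because residuation and splitting preserve the PIA shape of critical branches and the separation of the inequality into a pure and an inductive side.
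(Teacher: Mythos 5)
The paper itself states this lemma without proof, deferring to the treatment in \cite{CoPa:non-dist}, and your argument is a correct reconstruction of exactly that standard argument: descend the $\epsilon$-critical PIA branch applying residuation at (unary) SRA and at SRR nodes and splitting at binary SRA nodes, and use $\Omega$-minimality together with clause 2(b) of Definition \ref{aInducive:Ineq:Def} (and irreflexivity of $<_\Omega$) to ensure that every side formula residuated across at an SRR node is variable-free, so the accumulating side stays pure and the surviving inequalities land in one of the two Ackermann-ready forms. Nothing essential is missing.
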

The next lemma guarantees that the process of eliminating one propositional variable can be repeated as needed:
\begin{lemma}\label{aApp:Of:Ackermann:Lemma}
After applying the Ackermann rule to an $(\Omega, \epsilon)$-stripped system which is Ackermann-ready with respect to $p_i$, a system is obtained which is again $(\Omega, \epsilon)$-stripped.\end{lemma}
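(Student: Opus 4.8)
The plan is to prove the Right Ackermann-rule case (where $\epsilon_i = 1$) in full, after which the Left Ackermann-rule case ($\epsilon_i = \partial$) follows by the order-dual argument. So suppose $(S, \mathsf{Ineq})$ is $(\Omega,\epsilon)$-stripped and Ackermann-ready with respect to $p_i$, and let $(S', \mathsf{Ineq})$ be the system produced by $(RAR)$. Writing $p := p_i$ and $\mu := \bigvee_{i=1}^{n}\alpha_i$ for the minimal valuation, the inequalities $\alpha_i \leq p$ of form (1) in Definition \ref{aAckermann:Ready:Def} are discarded, and each surviving inequality $\beta_j(p) \leq \gamma_j(p)$ is replaced by $\beta_j(\mu) \leq \gamma_j(\mu)$. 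Since each $\alpha_i$ is pure by form (1), the substituted formula $\mu$ is pure; this single fact drives the whole argument, and $\mathsf{Ineq}$ is left untouched and remains pure by the side condition of $(RAR)$.

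First I would pin down the polarity of $p$ in the surviving inequalities. By Definition \ref{aStripped:Def}(1), for $\beta_j(p) \leq \gamma_j(p)$ one of $-\beta_j$, $+\gamma_j$ is pure, so $p$ occurs in at most one of these two trees; by Ackermann-readiness (Definition \ref{aAckermann:Ready:Def}(2)) that side in which $p$ actually occurs is $\epsilon^\partial$-uniform in $p$. Hence every occurrence of $p$ in $-\beta_j$ and in $+\gamma_j$ is an $\epsilon^\partial$-critical leaf, i.e.\ a negative leaf $-p$. As $\epsilon_i = 1$, an $\epsilon$-critical occurrence of $p$ is a positive leaf $+p$; therefore no occurrence of $p$ lies on an $\epsilon$-critical branch of $-\beta_j$ or $+\gamma_j$, and the substitution of $\mu$ for $p$ takes place exclusively at leaves lying off every $\epsilon$-critical branch.

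Next I would check the two clauses of Definition \ref{aStripped:Def} for each $\beta_j(\mu)\leq\gamma_j(\mu)$. For clause (1): the pure member of $\{-\beta_j, +\gamma_j\}$ contains no $p$, so the substitution acts as the identity on it and it stays pure; the $(\Omega,\epsilon)$-inductive member has its $p$-leaves replaced by the pure formula $\mu$, whose leaves are nominals, so no new propositional-variable leaf (hence no new $\epsilon$-critical branch) is created and all pre-existing $\epsilon$-critical branches survive verbatim. I would then spell out that the SRR side-conditions of Definition \ref{aInducive:Ineq:Def}(2) persist: for a side subtree $\gamma$ hanging off a critical branch, the requirement $\epsilon^\partial(\gamma) \prec \ast\phi$ is retained because replacing $\epsilon^\partial$-critical leaves $p$ by nominals neither disturbs the remaining propositional leaves nor introduces offending ones, while the dependence condition $p_k <_\Omega p_m$ (for the variable $p_m$ indexing the critical branch) only becomes easier to meet as $p$ is deleted from the variables occurring in $\gamma$. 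For clause (2): the $\epsilon$-critical branches of $-\beta_j$ and $+\gamma_j$ are literally the same branches as before the substitution and were PIA by strippedness of $(S,\mathsf{Ineq})$, so they remain PIA.

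The only genuinely delicate step—and the one I expect to be the main obstacle—is the signed-generation-tree bookkeeping in the previous paragraph: one must argue rigorously that substituting at non-critical leaves leaves each $\epsilon$-critical branch, together with the PIA/Skeleton decomposition of Definition \ref{adef:good:branches} and the $\Omega$-side-conditions of Definition \ref{aInducive:Ineq:Def}, entirely intact. Once this is granted, the degenerate case is harmless: if an inductive side happens to contain no propositional variable besides $p$, it simply becomes pure after the substitution, and a wholly pure inequality trivially satisfies both clauses of Definition \ref{aStripped:Def} since it has no $\epsilon$-critical branch at all. Thus $(S', \mathsf{Ineq})$ is again $(\Omega,\epsilon)$-stripped, as required.
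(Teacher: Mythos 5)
The paper states this lemma without proof, deferring to the standard treatment in \cite{CoPa:non-dist}, and your argument is exactly that standard argument---the minimal valuation $\bigvee_i \alpha_i$ is pure because the $\alpha_i$ are, every occurrence of $p_i$ in the surviving inequalities is a non-critical leaf by $\epsilon^\partial$-uniformity, so the substitution happens off every $\epsilon$-critical branch and leaves the good-branch decomposition, the PIA property, and the $\Omega$-side conditions of Definition \ref{aInducive:Ineq:Def} intact---so the proposal is correct and matches the intended proof. The one point worth flagging is that you implicitly read the clause $\epsilon^{\partial}(\gamma)\prec \ast\phi$ as constraining only propositional-variable leaves, so that the nominals introduced by substituting $\bigvee_i\alpha_i$ do not count as offending leaves; this is the right convention, and it is in any case already forced on the paper by Definition \ref{aStripped:Def}, whose inequalities contain nominals.
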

Therefore, all propositional variables can be eliminated from an  $(\Omega, \epsilon)$-stripped system, by repeatedly applying Ackermann rules. The discussion above outlines the proof of the following:
\begin{theorem}
$\mathrm{ALBA}$ succeeds on all inductive inequalities.
\end{theorem}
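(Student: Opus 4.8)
The plan is to assemble the final theorem from the chain of lemmas that have just been stated, following the standard architecture of ALBA success proofs as in \cite{CoPa:non-dist}. The overall strategy is to track the \emph{shape} of the inequality through the three stages of the algorithm, showing at each step that the syntactic form required to continue is preserved, and that the $\Omega$-minimal propositional variables can always be eliminated. First I would invoke Lemma \ref{aPre:Process:Lemma} to reduce to the case of a \emph{definite} $(\Omega,\epsilon)$-inductive inequality: since preprocessing in Stage 1 turns the input $\phi\leq\psi$ into a finite set $\{\phi_i\leq\psi_i\}$ of definite $(\Omega,\epsilon)$-inductive inequalities, and since the algorithm processes each resulting system separately, it suffices to show that ALBA succeeds on each definite $\phi_i\leq\psi_i$.

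Next I would apply Lemma \ref{aStripping:Lemma} to rewrite the initial system $(\emptyset,\phi_i\leq\psi_i)$ into an $(\Omega,\epsilon)$-stripped system by the pivotal application of the approximation rules. At this point $\mathsf{Ineq}$ is pure, and every inequality in $S$ has the controlled shape guaranteed by Definition \ref{aStripped:Def}, namely one side pure and the other $(\Omega,\epsilon)$-inductive with all $\epsilon$-critical branches PIA. The core of the argument is then an induction on the number of propositional variables remaining in the system. For the inductive step, I would pick a variable $p_i$ that is $\Omega$-minimal among those still occurring, apply Lemma \ref{aAckermann:Ready:LEmma} to transform the stripped system by residuation and splitting rules into one that is \emph{Ackermann-ready} with respect to $p_i$, and then fire the right or left Ackermann-rule (according to whether $\epsilon_i=1$ or $\epsilon_i=\partial$), whose applicability is exactly what Ackermann-readiness secures. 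By Lemma \ref{aApp:Of:Ackermann:Lemma}, the system resulting from this Ackermann step is again $(\Omega,\epsilon)$-stripped, but now with one fewer propositional variable, so the induction hypothesis applies.

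Iterating this procedure eliminates all propositional variables, leaving a pure system; Stage 3 then succeeds and outputs the first-order correspondent. The main obstacle, and the place where $\Omega$-minimality is genuinely used, is in Lemma \ref{aAckermann:Ready:LEmma}: one must verify that choosing an $\Omega$-minimal variable $p_i$ guarantees that, after residuation and splitting, every inequality in $S$ falls into one of the two Ackermann-ready forms. The delicate point is that the variables appearing in the ``side'' formulas $\gamma$ along the critical branches are, by the dependence-order clause $p_k<_\Omega p_i$ in the definition of $(\Omega,\epsilon)$-inductivity (Definition \ref{aInducive:Ineq:Def}), all strictly $\Omega$-below $p_i$; since $p_i$ is $\Omega$-minimal among the \emph{remaining} variables, such $p_k$ have already been solved for and no longer obstruct the uniformity required in clause (2) of Definition \ref{aAckermann:Ready:Def}. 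Verifying that the PIA shape of the critical branches (preserved by stripping) lets the residuation rules isolate $p_i$ on one side is the crux of the whole success argument; once that is in place, the remaining bookkeeping is routine and entirely parallel to \cite{CoPa:non-dist}, which is why I would only sketch it and refer the reader there for the details.
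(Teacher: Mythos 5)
Your proposal assembles the theorem from exactly the same chain of lemmas and in the same order as the paper: preprocessing to definite inductive inequalities (Lemma \ref{aPre:Process:Lemma}), pivotal approximation to reach an $(\Omega,\epsilon)$-stripped system (Lemma \ref{aStripping:Lemma}), residuation/splitting to Ackermann-readiness for an $\Omega$-minimal variable (Lemma \ref{aAckermann:Ready:LEmma}), and preservation of strippedness after each Ackermann step (Lemma \ref{aApp:Of:Ackermann:Lemma}), iterated until all propositional variables are gone. This matches the paper's own (sketched) argument essentially verbatim, and your added remarks on where $\Omega$-minimality and the PIA shape are genuinely used are consistent with Definitions \ref{aInducive:Ineq:Def} and \ref{aAckermann:Ready:Def}.
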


\section{Canonicity}\label{aSec:Canonicity}
In the present section, we prove that inductive inequalities are filter-canonical. Our proof is alternative to the one given by Holliday \cite[Theorem 7.20]{Ho16}. Indeed, Holliday's proof follows from the constructive canonicity of inductive inequalities proved in \cite{CP:constructive}, observing that the filter completions of BAOs coincide with their constructive canonical extensions (cf.\ \cite[Theorem 5.46]{Ho16}). In this section, we make use of the possibility semantics counterpart of the canonicity-via-correspondence argument, which is a variation of the standard U-shaped argument represented in the diagram below. For this U-shaped argument to be supported, we need to prove that ALBA is sound also with respect to the filter-descriptive possibility frames (see Definition \ref{adef:filter:desctiptive:frame}), i.e.\ the dual relational structure of Boolean algebras with operators in the framework of possibility semantics. Hence, in what follows, we will mainly focus on this aspect.

\begin{center}
\begin{tabular}{l c l}\label{atable:U:shape:descriptive}
$\mathbb{F}_{\mathsf{FD}}\vDash\phi(\vec p)\leq\psi(\vec p)$ & &$\mathbb{F}_{\mathsf{RO}}\vDash\phi(\vec p)\leq\psi(\vec p)$\\
\ \ \ \ \ \ $\Updownarrow$ & &\ \ \ \ \ \ $\Updownarrow$\\
$\mathbb{B}_{\mathsf{RO}}\vDash_{\mathsf{FD}}\phi(\vec p)\leq\psi(\vec p)$ & &$\mathbb{B}_{\mathsf{RO}}\vDash\phi(\vec p)\leq\psi(\vec p)$\\
\ \ \ \ \ \ $\Updownarrow$ & &\ \ \ \ \ \ $\Updownarrow$\\
$\mathbb{B}_{\mathsf{RO}}\vDash_{\mathsf{FD}}(\forall \vec{p}\in \mathbb{B}_{\mathsf{FD}})(\phi(\vec p)\leq\psi(\vec p))$ & & $\mathbb{B}_{\mathsf{RO}}\vDash(\forall \vec{p}\in \mathbb{B}_{\mathsf{RO}})(\phi(\vec p)\leq\psi(\vec p))$\\
\ \ \ \ \ \ $\Updownarrow$ & &\ \ \ \ \ \ $\Updownarrow$\\
$\mathbb{B}_{\mathsf{RO}}\vDash_{\mathsf{FD}}(\forall\vec{\mathbf{i}}\in \mathsf{PsAt}(\mathbb{B}_{\mathsf{RO}}))$Pure$(\phi(\vec p)\leq\psi(\vec p))$ &\ \ \ \ \ \ &$\mathbb{B}_{\mathsf{RO}}\vDash(\forall\vec{\mathbf{i}}\in \mathsf{PsAt}(\mathbb{B}_{\mathsf{RO}}))$Pure$(\phi(\vec p)\leq\psi(\vec p))$\\
\ \ \ \ \ \ $\Updownarrow$ & &\ \ \ \ \ \ $\Updownarrow$\\
$\mathbb{F}_{\mathsf{FD}}\vDash$FO(Pure$(\phi(\vec p)\leq\psi(\vec p))$) &\ \ \ $\Leftrightarrow$ \ \ \ &$\mathbb{F}_{\mathsf{RO}}\vDash$FO(Pure$(\phi(\vec p)\leq\psi(\vec p))$)
\end{tabular}
\end{center}

Here $\mathbb{B}_{\mathsf{FD}}$ denote the dual BAO of the filter-descriptive possibility frame $\mathbb{F}_{\mathsf{FD}}$, $\mathbb{F}_{\mathsf{RO}}$ is the underlying full possibility frame of $\mathbb{F}_{\mathsf{FD}}$, and $\mathbb{B}_{\mathsf{RO}}$ is the dual BAO of $\mathbb{F}_{\mathsf{RO}}$. 

We will first provide the semantic environment of the present section, i.e.\ filter-descriptive possibility frames and canonical extensions in Section \ref{asec:filter:descriptive}. Section \ref{asec:soundness:topological} gives the soundness proof of the algorithm with respect to (the dual BAOs of) filter-descriptive possibility frames, where the topological Ackermann lemmas are given. The correspondence and canonicity results are collected in Section \ref{aSec:results:topo}.

\subsection{Filter-descriptive possibility frames and canonical extensions}\label{asec:filter:descriptive}

In the present section, we will collect basic definitions of filter-descriptive possibility frames as well as canonical extensions. For more details, the reader is referred to \cite{Ho16} and \cite{BeBlWo06}.

\subsubsection{Filter-descriptive possibility frames}
In \cite{Ho16}, Holliday introduced filter-descriptive possibility frames, i.e.\ the possibility semantics counterpart of descriptive general (Kripke) frames \cite[Section 5.5]{BRV01}, in which restrictions on the admissible set are imposed. The following definition is one of these restrictions.
\begin{definition}[Tightness](cf.\ \cite[Definition 4.31]{Ho16})
A possibility frame $\mathbb{F}=(W, \sqsubseteq, R, \mathsf{P})$ is said to be 
\begin{itemize}
\item $R$-\emph{tight}, if $(\forall w, v\in W)(\forall X\in\mathsf{P}(w\in \Box_{P}(X) \Rightarrow v\in X)\Rightarrow wRv)$;
\item $\sqsubseteq$-\emph{tight}, if $(\forall w, v\in W)(\forall X\in\mathsf{P}(w\in X \Rightarrow v\in X)\Rightarrow v\sqsubseteq w)$;
\item \emph{tight}, if it is both $R$-tight and $\sqsubseteq$-tight.
\end{itemize}
\end{definition}
The filter-descriptive possibility frames are introduced as below:
\begin{definition}[Filter-descriptive possibility frames] (cf.\ \cite[Definition 5.39]{Ho16})\label{adef:filter:desctiptive:frame}
A possibility frame $\mathbb{F}=(W, \sqsubseteq, R, \mathsf{P})$ is said to be \emph{filter-descriptive}, if the following conditions hold:
\begin{itemize}
\item it is tight;
\item for every proper filter\footnote{A \emph{proper filter} in a BAO $\mathbb{B}$ is a non-empty subset $F\subseteq B$ such that 
\begin{itemize}
\item for any $a, b\in F$, $a\land b\in F$;
\item for any $a, b\in B$, if $a\leq b$ and $a\in B$, then $b\in B$;
\item $\bot\notin F$.
\end{itemize}} $F$ in $\mathbb{B}_{\mathsf{P}}$, there exists an element $w\in W$ such that $F=P(w)=\{X\in \mathsf{P}\mid w\in X\}$.
\end{itemize}
\end{definition}

\begin{remark}\label{aremark:underlying:full} As we mentioned on page \pageref{aP:vs:RO}, for any possibility frame $\mathbb{F}=(W, \sqsubseteq, R, \mathsf{P})$, $\mathsf{RO}(W,\tau_{\sqsubseteq})$ might not be closed under the three conditions in Definition \ref{adef:poss:frame:model}. However, if $\mathbb{F}$ is a filter-descriptive possibility frame, $\mathsf{RO}(W,\tau_{\sqsubseteq})$ is always closed under the three conditions, i.e.\ given any filter-descriptive possibility frame $\mathbb{F}_{\mathsf{FD}}=(W, \sqsubseteq, R, \mathsf{P})$, its underlying full possibility frame $\mathbb{F}_{\mathsf{RO}}=(W, \sqsubseteq, R, \mathsf{RO}(W, \tau_{\sqsubseteq}))$ is well-defined (see \cite[Theorem 5.32.1, Proposition 5.40]{Ho16}).
\end{remark}
In this section, we will mainly work on the dual algebraic side, i.e.\ the dual BAOs of filter-descriptive possibility frames and the dual BAOs of their underlying full possibility frames. Indeed, the latter are the \emph{constructive canonical extension} of the former, which we will discuss below.

\subsubsection{Canonical extension}

As is known in the setting of Kripke semantics and its algebraic counterpart, given a descriptive general frame $\mathbb{G}$ and its underlying Kripke frame $\mathbb{F}$, the dual BAO of $\mathbb{F}$ is the canonical extension of the dual BAO of $\mathbb{G}$. It is natural to ask what is the relation between the dual BAO of a filter-descriptive possibility frame $\mathbb{F}_{\mathsf{FD}}$ and the dual BAO of its underlying full possibility frame $\mathbb{F}_{\mathsf{RO}}$. Indeed, by \cite[Theorem 5.46]{Ho16}, given any filter-descriptive possibility frame $\mathbb{F}_{\mathsf{FD}}=(W, \sqsubseteq, R, \mathsf{P})$ and its underlying full possibility frame $\mathbb{F}_{\mathsf{RO}}=(W, \sqsubseteq, R, \mathsf{RO}(W, \tau_{\sqsubseteq}))$, the dual BAO of $\mathbb{F}_{\mathsf{RO}}$ is the \emph{constructive canonical extension} of $\mathbb{F}_{\mathsf{FD}}$. In what follows, we will collect the basic definitions about canonical extensions. We will refer the reader to \cite[Chapter 6]{BeBlWo06} for more details.

\begin{definition}[Canonical extensions of Boolean algebras] (cf.\ \cite[Chapter 6, Definition 104]{BeBlWo06})
The \emph{canonical extension} of a Boolean algebra $\mathbb{B}$ is a complete Boolean algebra $\mathbb{B}^\delta$ containing $\mathbb{B}$ as a sub-Boolean algerba, and such that:
\begin{itemize}
\item[](\emph{denseness}) every element of $\mathbb{B}^\delta$ is both a meet of joins and a join of meets of elements from $\mathbb{B}$;
\item[](\emph{compactness}) for all $S,T \subseteq \mathbb{B}$ with $\bigwedge S \leq \bigvee T$ in $\mathbb{B}^\delta$, there exist some finite subsets $F \subseteq S$ and $G\subseteq T$ such that $\bigwedge F \leq \bigvee G.$\footnote{In fact, this is an equivalent formulation of the definition in \cite{BeBlWo06}.}
\end{itemize}
\end{definition}
An element $x\in\mathbb{B}^\delta$ is \emph{closed} (resp.\ \emph{open})\footnote{Notice that here the definition of closedness and openness is different from the ones in the order topology introduced by the refinement order. In the remainder of the present section, closedness and openness refer to this definition.}, if it is the meet (resp.\ join) of some subset of $\mathbb{B}$. We let $\mathsf{K}(\mathbb{B}^\delta)$ and $\mathsf{O}(\mathbb{B}^\delta)$ respectively denote the sets of closed and open elements of $\mathbb{B}^\delta$. It is easy to see that elements in $\mathbb{B}$ are exactly the ones which are both closed and open (i.e.\ \emph{clopen}).

It is well known that the canonical extension of a Boolean algebra $\mathbb{B}$ is unique up to an isomorphism for any given $\mathbb{B}$, and that assuming the axiom of choice, the canonical extension of a Boolean algebra is a perfect Boolean algebra, i.e.\ a complete and atomic Boolean algebra (cf.\ e.g.\ \cite[page 90-91]{Ho16}).

The following properties can be easily checked:
\begin{lemma}
\begin{enumerate}
\item $(\mathbb{B}^{\partial})^{\delta}\cong(\mathbb{B}^{\delta})^{\partial}$;
\item $(\mathbb{B}^{n})^{\delta}\cong(\mathbb{B}^{\delta})^{n}$;
\item $(\mathbb{B}^{\epsilon})^{\delta}\cong(\mathbb{B}^{\delta})^{\epsilon}$;
\item $\mathsf{K}((\mathbb{B}^{\partial})^{\delta})=\mathsf{O}((\mathbb{B}^{\delta})^{\partial})$;
\item $\mathsf{O}((\mathbb{B}^{\partial})^{\delta})=\mathsf{K}((\mathbb{B}^{\delta})^{\partial})$;
\item $\mathsf{K}((\mathbb{B}^{n})^{\delta})=(\mathsf{K}(\mathbb{B}^{\delta}))^{n}$;
\item $\mathsf{O}((\mathbb{B}^{n})^{\delta})=(\mathsf{O}(\mathbb{B}^{\delta}))^{n}$.
\end{enumerate}
\end{lemma}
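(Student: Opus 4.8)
The plan is to reduce all seven items to the uniqueness of the canonical extension: since $\mathbb{B}^{\delta}$ is determined up to isomorphism (fixing the base algebra $\mathbb{B}$) by completeness together with denseness and compactness, to prove items (1)--(3) it suffices, in each case, to exhibit a concrete complete Boolean algebra containing the relevant base as its clopen subalgebra and to verify that denseness and compactness hold; uniqueness then supplies the isomorphism. For item (1) I would take $(\mathbb{B}^{\delta})^{\partial}$ as the candidate and check that it is a canonical extension of $\mathbb{B}^{\partial}$. It is complete (order-duals of complete Boolean algebras are complete) and its clopen subalgebra is exactly $\mathbb{B}^{\partial}$. Denseness is self-dual: the requirement that every element be both a meet of joins and a join of meets of base elements is symmetric under passage to the order-dual, since duality interchanges ``meet of joins'' with ``join of meets''; hence it transfers from $\mathbb{B}^{\delta}$. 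Compactness is obtained by swapping the two families together with meets and joins: $\bigwedge S \leq \bigvee T$ in $(\mathbb{B}^{\delta})^{\partial}$ unwinds to $\bigwedge T \leq \bigvee S$ in $\mathbb{B}^{\delta}$, to which compactness of $\mathbb{B}^{\delta}$ applies. By uniqueness, $(\mathbb{B}^{\partial})^{\delta}\cong(\mathbb{B}^{\delta})^{\partial}$.

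For item (2) I would argue identically with $(\mathbb{B}^{\delta})^{n}$ as the candidate canonical extension of $\mathbb{B}^{n}$, using that finite products of complete Boolean algebras are complete and that meets and joins in a product are computed coordinatewise. Denseness and compactness then reduce to the coordinatewise statements: a tuple is the coordinatewise join of the closed tuples below it because in each coordinate one may freely choose a closed element below $x_i$ and pad the other coordinates with $\bot$ (itself closed), and dually for meets of open tuples; and $\bigwedge S \leq \bigvee T$ in the product holds iff it holds in each coordinate, so one takes for each coordinate a finite subcover witnessing the coordinate inequality and forms the finite union of these over all $n$ coordinates. Item (3) is then immediate: writing $\mathbb{B}^{\epsilon}=\mathbb{B}^{\epsilon_1}\times\cdots\times\mathbb{B}^{\epsilon_n}$, apply (2) to split the canonical extension over the product and (1) in each coordinate with $\epsilon_i=\partial$ (the coordinates with $\epsilon_i=1$ being handled trivially).

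Finally, for (4)--(7) I would transport the sets of closed and open elements through the isomorphisms just established and read off the effect on the defining meets and joins. Under the isomorphism of (1), a meet of base elements computed in $(\mathbb{B}^{\partial})^{\delta}$ corresponds to the same indexed meet computed in $(\mathbb{B}^{\delta})^{\partial}$, which is a join back in $\mathbb{B}^{\delta}$; thus closed elements on one side match open elements on the other, which is the content of (4) and (5). Under the isomorphism of (2), closedness and openness are defined by meets resp. joins, and since these are coordinatewise in a product, $\mathsf{K}((\mathbb{B}^{n})^{\delta})$ and $\mathsf{O}((\mathbb{B}^{n})^{\delta})$ are precisely the tuples of closed resp. open elements, giving (6) and (7). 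None of these steps is deep; the only place where a slip is easy is the bookkeeping of which operation ($\wedge$ or $\vee$) each of $\partial$ and the product induces on the distinguished families $\mathsf{K}$ and $\mathsf{O}$. I would therefore fix the order conventions at the outset, verify the self-duality of denseness and the coordinatewise behaviour of compactness once, and then reuse these observations throughout, so that the main obstacle is purely one of careful bookkeeping rather than of genuine mathematical difficulty.
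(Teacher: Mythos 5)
The paper offers no proof of this lemma at all (it is prefaced only by ``The following properties can be easily checked''), so there is nothing to compare against except the standard folklore argument, and yours is a correct instance of it. Your reduction of (1)--(3) to the uniqueness of the canonical extension is sound: the candidate algebras $(\mathbb{B}^{\delta})^{\partial}$ and $(\mathbb{B}^{\delta})^{n}$ are complete, contain $\mathbb{B}^{\partial}$ resp.\ $\mathbb{B}^{n}$ as sub-Boolean algebras, and your verifications of denseness (self-dual; coordinatewise with $\bot$/$\top$-padding) and compactness (order-reversal swaps the two families; in the product, take a finite set of preimages in $S$ and $T$ covering each coordinate's finite witnesses and observe that enlarging $F$ only lowers $\bigwedge F$) all go through. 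Two small remarks. First, for (1) you only need that $\mathbb{B}^{\partial}$ embeds as a sub-Boolean algebra of $(\mathbb{B}^{\delta})^{\partial}$; that the clopens are \emph{exactly} the base elements is a consequence of compactness rather than part of the definition, so it need not be checked separately. Second, your closing caution about bookkeeping is exactly where the statement itself is delicate: in (4)--(5) the right-hand sides must be read as the sets $\mathsf{O}(\mathbb{B}^{\delta})$ and $\mathsf{K}(\mathbb{B}^{\delta})$ transported along the isomorphism of (1) (i.e.\ the $\partial$ records only the order reversal); if one instead computed ``the open elements of $(\mathbb{B}^{\delta})^{\partial}$ qua canonical extension of $\mathbb{B}^{\partial}$'' one would land on $\mathsf{K}(\mathbb{B}^{\delta})$ and the identity would read $\mathsf{O}(\mathbb{B}^{\delta})=\mathsf{K}(\mathbb{B}^{\delta})$, which is false. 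Your transport argument uses the correct reading, so the proof stands.
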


Let $\mathbb{A},\mathbb{B}$ be Boolean algebras. An order-preserving map $f:\mathbb{A}\rightarrow \mathbb{B}$ can be extended to a map $\ca\to\B^{\delta}$ in two canonical ways. Let $f^\sigma$ and $f^\pi$ respectively denote the $\sigma$ and $\pi$\emph{-extension} of $f$ (\cite[page 375]{BeBlWo06}) defined as follows:
\begin{definition}[$\sigma$- and $\pi$-extension]\label{adef:canonical:extension:maps}
For any order-preserving map $f:\mathbb{A}\rightarrow\mathbb{B}$ and all $u\in \mathbb{A}^\delta$, we define
\[f^\sigma (u) =\bigvee \{ \bigwedge \{f(a): x\leq a\in \mathbb{A}\}: u \geq x \in \mathsf{K}(\mathbb{A}^\delta)\}\]
\[f^\pi (u) =\bigwedge \{ \bigvee \{f(a): y\geq a\in \mathbb{A}\}: u \leq y \in \mathsf{O}(\mathbb{A}^\delta)\}.\]
\end{definition}
For any map $f:\mathbb{A}\to\mathbb{B}$ we let $f^{\partial}:\mathbb{A}^{\partial}\to\mathbb{B}^{\partial}$ be such that $f^{\partial}(a):=f(a)$ for all $a\in\A$, we have that $(f^{\partial})^{\sigma}=(f^{\pi})^{\partial}$ and $(f^{\partial})^{\pi}=(f^{\sigma})^{\partial}$.

Since in a BAO, $\Box$ is meet-preserving, it is \emph{smooth}, i.e.\ $\Box^{\sigma}=\Box^{\pi}$ (cf.\ \cite[Proposition 111(3)]{BeBlWo06}). Therefore, the canonical extension of a BAO can be defined as follows:
\begin{definition}[Canonical extensions of BAOs]
The canonical extension of any BAO $(\mathbb{B}, \Box)$ is $(\mathbb{B}^{\delta}, \Box^{\sigma})= (\mathbb{B}^{\delta}, \Box^{\pi}).$
\end{definition}
\begin{definition}[Perfect Boolean algebra](cf.\ \cite[Chapter 6, Definition 40]{BeBlWo06})
A BAO $\mathbb{B}=(B,\bot,\top,\land,\lor,-,\Box)$ is said to be \emph{perfect}, if $\mathbb{B}$ is complete, atomic and completely multiplicative.
\end{definition}
As is argued in \cite[page 90-91]{Ho16}, with the axiom of choice, it can be shown that if $\mathbb{B}$ is a BAO, then $\mathbb{B}^{\delta}$ is a perfect BAO. When the axiom of choice is not available, the canonical extension of a BAO cannot be shown to be prefect in general. What can be shown is that the canonical extension of a BAO is the constructive canonical extension defined in e.g.\ \cite{GeHa01}, which is complete and completely multiplicative.

As is shown in \cite[Theorem 5.46]{Ho16}, $\mathbb{B}_{\mathsf{RO}}$ is in fact the constructive canonical extension of $\mathbb{B}_{\mathsf{FD}}$. Therefore, the following diagram shows the relation between filter-descriptive possibility frames and their underlying full possibility frames, as well as their duals:
\begin{center}
\begin{tikzpicture}[node/.style={circle, draw, fill=black}, scale=1]\label{atable:canonical:extension}
\node (BFD) at (-1.5,-1.5) {$\mathbb{B}_{\mathsf{FD}}$};
\node (BRO) at (-1.5,1.5) {$\mathbb{B}_{\mathsf{RO}}$};
\node (FD) at (1.5,-1.5) {$\mathbb{F}_{\mathsf{FD}}$};
\node (RO) at (1.5,1.5) {$\mathbb{F}_{\mathsf{RO}}$};
\draw [right hook->] (BFD) to node[left]{$(\cdot)^{\delta}$} (BRO);
\draw [->] (FD) to node[right]{$U$} (RO);
\draw [<->] (BFD) to node[above] {$\cong^{\partial}$} (FD);
\draw [<->] (BRO) to node[above] {$\cong^{\partial}$} (RO);
\end{tikzpicture}
\end{center}

Here $\cong^{\partial}$ means dual equivalence, $U$ is the forgetful functor dropping the admissible condition and replace the admissible set to the set of regular opens in the downset topology, $(\cdot)^{\delta}$ is taking the constructive canonical extension.

Using the definitions and constructions given above, it is possible to define the notion of filter-canonicity:

\begin{definition}[Filter-canonicity]
We say that an inequality $\phi\leq\psi$ is \emph{filter-canonical}, if whenever it is valid on a filter-descriptive possibility frame, it is also valid on its underlying full possibility frame.\footnote{Notice that here our definition is different from \cite[Definition 7.15]{Ho16}, which is based on the notion of canonical possibility models and frames.}
\end{definition}

By the duality theory of possibility semantics (see \cite[Section 5]{Ho16}), filter-canonicity above is equivalent to the preservation under taking constructive canonical extension.

Now we can come back to the U-shaped argument given on page \pageref{atable:U:shape:descriptive}. This argument starts from the top-left corner with the validity of the input inequality $\phi\leq\psi$ on $\mathbb{F}_{\mathsf{FD}}$, then reformulate it as the validity of the inequality in $\mathbb{B}_{\mathsf{RO}}$ with propositional variables interpreted as elements in $\mathbb{B}_{\mathsf{FD}}$, and use the algorithm ALBA to transform the inequality into an equivalent quasi-inequality Pure$(\phi\leq\psi)$ as well as its first-order translation, and then go back to the dual filter-descriptive possibility frame. Since the validity of the first-order formula does not depend on the admissible set, the bottom equivalence is obvious. The right half of the argument goes on the side of full possibility frames and their duals, the soundness of which is already shown in Section \ref{aSec:soundness}.

Indeed, the U-shaped argument on page \pageref{atable:U:shape:descriptive} gives the following results:

\begin{itemize}
\item Correspondence results with respect to filter-descriptive frames, which only uses the left arm of the U-shaped argument;
\item Canonicity results, which uses the whole U-shaped argument\footnote{In fact, as is mentioned in \cite[Section 7]{Ho16}, using techniques from \cite{CP:constructive}, the canonicity results can also be obtained via another U-shaped argument where nominals are interpreted as closed elements. Therefore, our proof here can be regarded as an alternative proof which has its relational counterpart.};
\end{itemize}

Since the equivalences of the right arm of the U-shaped argument is already shown in Section \ref{aSec:soundness}, and the bottom equivalence is obvious, we will focus on the equivalences of the left-arm, i.e.\ the soundness of the algorithm with respect to filter-descriptive possibility frames and their duals.

\subsection{Soundness over filter-descriptive possibility frames}\label{asec:soundness:topological}
In the present section we will prove the soundness of the algorithm ALBA with respect to the dual BAOs of filter-descriptive possibility frames. Indeed, similar to other semantic settings (see e.g.\ \cite{CoPa12}), the soundness proof of the filter-descriptive possibility frame side goes similar to that of the full possibility frame side (i.e.\ Theorem \ref{aCrctns:Theorem}), and for every rule except for the Ackermann rules, the proof goes without modification, thus we will only focus on the Ackermann rules here, which is justified by the topological Ackermann lemmas given below. 

\subsubsection{Topological Ackermann lemmas}\label{asubsection:topo:Ackermann}

In the present section we prove the topological Ackermann lemmas, which is the technical core of the soundness proof of the Ackermann rules with respect to filter-descriptive possibility frames. The proof is analogous to the topological Ackermann lemmas in the existing literature (e.g.\ \cite{PaSoZh15}), and we only expand on the proofs which are different.

For the Ackermann rules, the soundness proof with respect to full possibility frames is justified by the following Ackermann lemmas:

\begin{lemma}[Right-handed Ackermann lemma]\label{aRight:Ack:discrete} Let $\alpha$ be such that $p\notin\mathsf{Prop}(\alpha)$, let $\beta_1(p), \ldots, \beta_n(p)$ be positive in $p$, let $\gamma_1(p), \ldots, \gamma_n(p)$ be negative in $p$, and let $\vec q, \vec \nomj$ be all the propositional variables, nominals, respectively, occurring in $\alpha, \beta_1(p), \ldots, \beta_n(p)$, $\gamma_1(p), \ldots, \gamma_n(p)$ other than $p$. Then for all $\vec a\in\mathbb{B}_{\mathsf{RO}}, \vec x\in\mathsf{PsAt}(\mathbb{B}_{\mathsf{RO}})$, the following are equivalent:
\begin{enumerate}
\item
$\beta_i^{\mathbb{B}_{\mathsf{RO}}}(\vec a, \vec x, \alpha^{\mathbb{B}_{\mathsf{RO}}}(\vec a, \vec x))\leq\gamma_i^{\mathbb{B}_{\mathsf{RO}}}(\vec a, \vec x, \alpha^{\mathbb{B}_{\mathsf{RO}}}(\vec a, \vec x))$ for $1\leq i\leq n$;
\item
There exists $a_0\in\mathbb{B}_{\mathsf{RO}}$ such that $\alpha^{\mathbb{B}_{\mathsf{RO}}}(\vec a, \vec x)\leq a_0$ and $\beta_i^{\mathbb{B}_{\mathsf{RO}}}(\vec a, \vec x, a_0)\leq\gamma_i^{\mathbb{B}_{\mathsf{RO}}}(\vec a, \vec x, a_0)$ for $1\leq i\leq n$.
\end{enumerate}
\end{lemma}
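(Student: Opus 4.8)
The plan is to prove the two implications separately, observing that this \emph{discrete} Ackermann lemma rests entirely on the order-theoretic behaviour of the term functions and needs none of the topological machinery (closed/open elements, compactness) that the filter-descriptive version will later require.

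The direction from (1) to (2) is immediate: I would take $a_0 := \alpha^{\mathbb{B}_{\mathsf{RO}}}(\vec a, \vec x)$ as the witness. Then $\alpha^{\mathbb{B}_{\mathsf{RO}}}(\vec a, \vec x) \leq a_0$ holds with equality, and substituting this value for the $p$-coordinate turns the second conjunct of (2) into precisely the hypothesis (1). For the converse, fix a witness $a_0$ with $\alpha^{\mathbb{B}_{\mathsf{RO}}}(\vec a, \vec x) \leq a_0$ and $\beta_i^{\mathbb{B}_{\mathsf{RO}}}(\vec a, \vec x, a_0) \leq \gamma_i^{\mathbb{B}_{\mathsf{RO}}}(\vec a, \vec x, a_0)$ for each $i$. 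The key point is that, since $\beta_i(p)$ is positive in $p$, its induced term function is order-preserving in the $p$-coordinate, while since $\gamma_i(p)$ is negative in $p$, its induced term function is order-reversing in the $p$-coordinate. Writing $c := \alpha^{\mathbb{B}_{\mathsf{RO}}}(\vec a, \vec x)$, from $c \leq a_0$ I would then chain
$$\beta_i^{\mathbb{B}_{\mathsf{RO}}}(\vec a, \vec x, c) \leq \beta_i^{\mathbb{B}_{\mathsf{RO}}}(\vec a, \vec x, a_0) \leq \gamma_i^{\mathbb{B}_{\mathsf{RO}}}(\vec a, \vec x, a_0) \leq \gamma_i^{\mathbb{B}_{\mathsf{RO}}}(\vec a, \vec x, c),$$
where the two outer inequalities are monotonicity and antitonicity and the middle one is the witness hypothesis. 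This yields exactly (1).

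The only substantive step is the claim that positivity (resp.\ negativity) of $p$ in a term forces the induced function $\mathbb{B}_{\mathsf{RO}} \to \mathbb{B}_{\mathsf{RO}}$ to be order-preserving (resp.\ order-reversing) in the $p$-coordinate, which I would establish by a routine induction on term structure. Here one uses that every connective of the expanded language is interpreted in $\mathbb{B}_{\mathsf{RO}}$ as a map that is order-preserving or order-reversing in each argument: $\wedge_{\mathsf{RO}}$, $\vee_{\mathsf{RO}}$, $\Box_{\mathsf{RO}}$ and $\Diamondblack_{\mathsf{RO}}$ are monotone, $-_{\mathsf{RO}}$ is antitone, and $\to$ is antitone in its first and monotone in its second argument (cf.\ Section \ref{aSec:algebraic:analysis}). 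The signed-generation-tree bookkeeping of Definition \ref{adef: signed gen tree} matches sign with (anti)tonicity coordinate-wise, with the sign flips at $\neg$ and at the first argument of $\to$ exactly accounting for the order-reversals, so the induction goes through. I expect no genuine obstacle in this lemma: unlike the topological version over filter-descriptive frames, where the witness $a_0$ must be located among the clopen elements and compactness must be invoked to produce it, here the witness is allowed to range over all of $\mathbb{B}_{\mathsf{RO}}$, so the canonical choice $a_0 = \alpha^{\mathbb{B}_{\mathsf{RO}}}(\vec a, \vec x)$ is always available and the entire content reduces to the monotonicity bookkeeping just described.
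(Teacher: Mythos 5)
Your proof is correct and is exactly the standard argument the paper relies on: the paper does not spell out a proof of this lemma but defers to the literature (Lemma 6.3--6.4 of the non-distributive ALBA paper), noting only that the minimal valuation $\alpha^{\mathbb{B}_{\mathsf{RO}}}(\vec a,\vec x)$ always lives in $\mathbb{B}_{\mathsf{RO}}$ so no topological refinement is needed, and that referenced proof is precisely your choice of witness $a_0=\alpha^{\mathbb{B}_{\mathsf{RO}}}(\vec a,\vec x)$ combined with the monotonicity/antitonicity chain. No gap; the tonicity induction you sketch is the routine content.
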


\begin{lemma}[Left-handed Ackermann lemma]\label{aLeft:Ack:discrete}
Let $\alpha$ be such that $p\notin \mathsf{Prop}(\alpha)$, let $\beta_1(p), \ldots, \beta_n(p)$ be negative in $p$, let $\gamma_1(p), \ldots, \gamma_n(p)$ be positive in $p$, and let $\vec q, \vec \nomj$ be all the propositional variables, nominals, respectively, occurring in $\alpha, \beta_1(p), \ldots, \beta_n(p)$, $\gamma_1(p), \ldots, \gamma_n(p)$ other than $p$. Then for all $\vec a\in\mathbb{B}_{\mathsf{RO}}, \vec x\in \mathsf{PsAt}(\mathbb{B}_{\mathsf{RO}})$, the following are equivalent:
\begin{enumerate}
\item
$\beta_i^{\mathbb{B}_{\mathsf{RO}}}(\vec a, \vec x, \alpha^{\mathbb{B}_{\mathsf{RO}}}(\vec a, \vec x))\leq\gamma_i^{\mathbb{B}_{\mathsf{RO}}}(\vec a, \vec x, \alpha^{\mathbb{B}_{\mathsf{RO}}}(\vec a, \vec x))$ for $1\leq i\leq n$;
\item
There exists $a_0\in\mathbb{B}_{\mathsf{RO}}$ such that $a_0\leq\alpha^{\mathbb{B}_{\mathsf{RO}}}(\vec a, \vec x)$ and $\beta_i^{\mathbb{B}_{\mathsf{RO}}}(\vec a, \vec x, a_0)\leq\gamma_i^{\mathbb{B}_{\mathsf{RO}}}(\vec a, \vec x, a_0)$ for $1\leq i\leq n$.
\end{enumerate}
\end{lemma}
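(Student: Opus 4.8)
The plan is to prove the two implications by the standard minimal-valuation/monotonicity argument, the crucial observation being that in this discrete formulation every term is interpreted as an operation on $\mathbb{B}_{\mathsf{RO}}$, so the witness $a_0$ may be taken inside $\mathbb{B}_{\mathsf{RO}}$ without any topological machinery. First I would record the order-theoretic fact that underlies both directions: since each $\beta_i(p)$ is negative in $p$ and each $\gamma_i(p)$ is positive in $p$, the induced unary term functions $b\mapsto\beta_i^{\mathbb{B}_{\mathsf{RO}}}(\vec a,\vec x,b)$ and $b\mapsto\gamma_i^{\mathbb{B}_{\mathsf{RO}}}(\vec a,\vec x,b)$ are respectively antitone and monotone on $\mathbb{B}_{\mathsf{RO}}$. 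This is a routine induction on term structure, using that every connective of $\mathcal{L}^{+}$ is interpreted in $\mathbb{B}_{\mathsf{RO}}$ as a map which is monotone or antitone in each coordinate according to its polarity (as fixed by the signed generation tree conventions of Definition \ref{adef: signed gen tree}); I would treat it as established.

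For the implication from the first condition to the second, the witness is forced: I would take $a_0:=\alpha^{\mathbb{B}_{\mathsf{RO}}}(\vec a,\vec x)$. Since $\vec a\in\mathbb{B}_{\mathsf{RO}}$, $\vec x\in\mathsf{PsAt}(\mathbb{B}_{\mathsf{RO}})\subseteq\mathbb{B}_{\mathsf{RO}}$, and all connectives act as operations on $\mathbb{B}_{\mathsf{RO}}$, this $a_0$ indeed lies in $\mathbb{B}_{\mathsf{RO}}$. It satisfies $a_0\leq\alpha^{\mathbb{B}_{\mathsf{RO}}}(\vec a,\vec x)$ with equality, and substituting $a_0$ for the last argument turns the hypotheses verbatim into the required inequalities $\beta_i^{\mathbb{B}_{\mathsf{RO}}}(\vec a,\vec x,a_0)\leq\gamma_i^{\mathbb{B}_{\mathsf{RO}}}(\vec a,\vec x,a_0)$.

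For the converse, suppose $a_0\in\mathbb{B}_{\mathsf{RO}}$ witnesses the second condition, so $a_0\leq\alpha^{\mathbb{B}_{\mathsf{RO}}}(\vec a,\vec x)$ and $\beta_i^{\mathbb{B}_{\mathsf{RO}}}(\vec a,\vec x,a_0)\leq\gamma_i^{\mathbb{B}_{\mathsf{RO}}}(\vec a,\vec x,a_0)$ for each $i$. Fixing $i$, antitonicity of $\beta_i$ in its last coordinate gives $\beta_i^{\mathbb{B}_{\mathsf{RO}}}(\vec a,\vec x,\alpha^{\mathbb{B}_{\mathsf{RO}}}(\vec a,\vec x))\leq\beta_i^{\mathbb{B}_{\mathsf{RO}}}(\vec a,\vec x,a_0)$, and monotonicity of $\gamma_i$ gives $\gamma_i^{\mathbb{B}_{\mathsf{RO}}}(\vec a,\vec x,a_0)\leq\gamma_i^{\mathbb{B}_{\mathsf{RO}}}(\vec a,\vec x,\alpha^{\mathbb{B}_{\mathsf{RO}}}(\vec a,\vec x))$. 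Chaining these two inequalities around the assumed $\beta_i^{\mathbb{B}_{\mathsf{RO}}}(\vec a,\vec x,a_0)\leq\gamma_i^{\mathbb{B}_{\mathsf{RO}}}(\vec a,\vec x,a_0)$ yields the first condition.

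I do not expect a genuine obstacle in this discrete formulation; the proof is elementary and matches \cite[Lemma 6.3, 6.4]{CoPa:non-dist}. The only point that needs care is verifying that the candidate witness $a_0=\alpha^{\mathbb{B}_{\mathsf{RO}}}(\vec a,\vec x)$ actually belongs to $\mathbb{B}_{\mathsf{RO}}$, which is immediate here precisely because all variables and connectives are interpreted in $\mathbb{B}_{\mathsf{RO}}$. This is exactly the step that breaks once valuations are restricted to an admissible subalgebra $\mathsf{P}$: there the extremal $a_0$ need not be admissible, which is why the later \emph{topological} Ackermann lemmas must replace this monotonicity argument by one using closed and open elements together with compactness.
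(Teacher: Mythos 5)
Your proposal is correct and is exactly the standard minimal/maximal-valuation argument that the paper itself invokes by deferring to \cite[Lemma 6.3, 6.4]{CoPa:non-dist}: the witness $a_0=\alpha^{\mathbb{B}_{\mathsf{RO}}}(\vec a,\vec x)$ lies in $\mathbb{B}_{\mathsf{RO}}$ because all connectives of $\mathcal{L}^{+}$ are interpreted as operations on $\mathbb{B}_{\mathsf{RO}}$, and the converse follows from the antitonicity of the $\beta_i$ and monotonicity of the $\gamma_i$ in $p$. Your closing remark about why this argument breaks under admissible assignments, necessitating the topological Ackermann lemmas, also matches the paper's discussion.
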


As is similar to what is discussed in the existing literature (e.g.\ \cite[Section 9]{CoPa12}), the soundness proof of the Ackermann rules, namely the Ackermann lemmas, cannot be straightforwardly adapted to the setting of admissible assignments, since formulas in the expanded modal language $\mathcal{L}^{+}$ might be interpreted as elements in $\mathbb{B}_{\mathsf{RO}}\setminus\mathbb{B}_{\mathsf{FD}}$ even if all the propositional variables are interpreted in $\mathbb{B}_{\mathsf{FD}}$, thus we cannot find the $a_0\in\mathbb{B}_{\mathsf{FD}}$ required in the setting of admissible assignments. Therefore, some adaptations are necessary based on the syntactic shape of the formulas, the definitions of which are analogous to \cite[Definition B.3]{PaSoZh15}:

\begin{definition}[Syntactically closed and open formulas]\label{adef:closed:open}
\begin{enumerate}
\item A formula in $\mathcal{L}^{+}$ is \emph{syntactically closed} if all occurrences of nominals and $\Diamondblack$ are positive, and all occurrences of $\Boxblack$ are negative;
\item A formula in $\mathcal{L}^{+}$ is \emph{syntactically open} if all occurrences of nominals and $\Diamondblack$ are negative, and all occurrences of $\Boxblack$ are positive.
\end{enumerate}
\end{definition}

As is discussed in \cite[Section 9]{CoPa12}, the intuition behind these definitions is that the value of a syntactically open (resp.\ closed) formula under an admissible assignment is always an open (resp.\ closed) element in $\mathbb{B}_{\mathsf{RO}}$, i.e., in $\mathsf{O}(\mathbb{B}_{\mathsf{RO}})$ (resp.\ $\mathsf{K}(\mathbb{B}_{\mathsf{RO}})$), therefore we can apply compactness to obtain an admissible $a_0$ as required by the Ackermann lemmas stated below, which are analogous to \cite[Lemma B.4, B.5]{PaSoZh15}:

\begin{lemma}[Right-handed topological Ackermann lemma]\label{aRight:Ack} Let $\alpha$ be syntactically closed, $p\notin\mathsf{Prop}(\alpha)$, let $\beta_1(p), \ldots, \beta_n(p)$ be syntactically closed and positive in $p$, let $\gamma_1(p), \ldots, \gamma_n(p)$ be syntactically open and negative in $p$, and let $\vec q, \vec \nomj$ be all the propositional variables, nominals, respectively, occurring in $\alpha, \beta_1(p), \ldots, \beta_n(p)$, $\gamma_1(p), \ldots, \gamma_n(p)$ other than $p$. Then for all $\vec a\in\mathbb{B}_{\mathsf{FD}}, \vec x\in\mathsf{PsAt}(\mathbb{B}_{\mathsf{RO}})$, the following are equivalent:
\begin{enumerate}
\item
$\beta_i^{\mathbb{B}_{\mathsf{RO}}}(\vec a, \vec x, \alpha^{\mathbb{B}_{\mathsf{RO}}}(\vec a, \vec x))\leq\gamma_i^{\mathbb{B}_{\mathsf{RO}}}(\vec a, \vec x, \alpha^{\mathbb{B}_{\mathsf{RO}}}(\vec a, \vec x))$ for $1\leq i\leq n$;
\item
There exists $a_0\in\mathbb{B}_{\mathsf{FD}}$ such that $\alpha^{\mathbb{B}_{\mathsf{RO}}}(\vec a, \vec x)\leq a_0$ and $\beta_i^{\mathbb{B}_{\mathsf{RO}}}(\vec a, \vec x, a_0)\leq\gamma_i^{\mathbb{B}_{\mathsf{RO}}}(\vec a, \vec x, a_0)$ for $1\leq i\leq n$.
\end{enumerate}
\end{lemma}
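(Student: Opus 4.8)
The plan is to follow the template of the topological Ackermann lemma in \cite{PaSoZh15}, isolating the single point at which the possibility-semantics interpretation of nominals forces a new verification.

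The direction from (2) to (1) is routine and does not use the topology. Given a witness $a_0\in\mathbb{B}_{\mathsf{FD}}$ with $\alpha^{\mathbb{B}_{\mathsf{RO}}}(\vec a,\vec x)\leq a_0$, I would use that each $\beta_i$ is positive (hence monotone) in $p$ and each $\gamma_i$ is negative (hence antitone) in $p$ to obtain, for every $i$, the chain $\beta_i^{\mathbb{B}_{\mathsf{RO}}}(\vec a,\vec x,\alpha^{\mathbb{B}_{\mathsf{RO}}}(\vec a,\vec x))\leq\beta_i^{\mathbb{B}_{\mathsf{RO}}}(\vec a,\vec x,a_0)\leq\gamma_i^{\mathbb{B}_{\mathsf{RO}}}(\vec a,\vec x,a_0)\leq\gamma_i^{\mathbb{B}_{\mathsf{RO}}}(\vec a,\vec x,\alpha^{\mathbb{B}_{\mathsf{RO}}}(\vec a,\vec x))$, which is exactly (1): the middle inequality is the hypothesis of (2), and the outer two come from monotonicity and antitonicity applied to $\alpha^{\mathbb{B}_{\mathsf{RO}}}(\vec a,\vec x)\leq a_0$.

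For the direction from (1) to (2), recall that $\mathbb{B}_{\mathsf{RO}}$ is the (constructive) canonical extension of $\mathbb{B}_{\mathsf{FD}}$, so that $\mathsf{K}(\mathbb{B}_{\mathsf{RO}})$ and $\mathsf{O}(\mathbb{B}_{\mathsf{RO}})$ are exactly the meets and joins of families from $\mathbb{B}_{\mathsf{FD}}$. The heart of the argument is the closed/open lemma, to be proved by simultaneous induction on formula complexity: under an assignment sending propositional variables into $\mathbb{B}_{\mathsf{FD}}$ (clopen) and nominals into $\mathsf{PsAt}(\mathbb{B}_{\mathsf{RO}})$, every syntactically closed formula takes a value in $\mathsf{K}(\mathbb{B}_{\mathsf{RO}})$ and every syntactically open formula a value in $\mathsf{O}(\mathbb{B}_{\mathsf{RO}})$. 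The inductive steps for $\wedge,\vee,\neg,\Box,\Diamond,\Diamondblack,\Boxblack$ are those of \cite{PaSoZh15,CoPa12}, using that $\neg$ interchanges closed and open elements, that the join-preserving connectives ($\Diamond$ and $\Diamondblack_{\mathsf{RO}}$) send closed to closed, and that the meet-preserving ones ($\Box$ and $\Boxblack$) send open to open. The only genuinely new base case is the one for nominals: I would check that each pseudo-atom is closed, via the identity $\mathsf{ro}(\{w\})=\bigwedge\{X\in\mathsf{P}\mid w\in X\}$, which exhibits $\mathsf{ro}(\{w\})$ as a meet of admissible elements. This identity is where filter-descriptiveness enters, through $\sqsubseteq$-tightness and Holliday's duality \cite[Theorem 5.46]{Ho16}; it is the possibility-semantics counterpart of the fact that completely join-irreducibles of a canonical extension are closed.

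With the closed/open lemma in hand, set $c:=\alpha^{\mathbb{B}_{\mathsf{RO}}}(\vec a,\vec x)\in\mathsf{K}(\mathbb{B}_{\mathsf{RO}})$ and let $D:=\{b\in\mathbb{B}_{\mathsf{FD}}\mid c\leq b\}$, a down-directed set with $\bigwedge D=c$. For each $i$, $\beta_i^{\mathbb{B}_{\mathsf{RO}}}(\vec a,\vec x,c)$ is closed, $\gamma_i^{\mathbb{B}_{\mathsf{RO}}}(\vec a,\vec x,c)$ is open, and (1) asserts closed $\leq$ open. Using the continuity of the two interpretations in the $p$-coordinate at the closed point $c$, namely $\beta_i^{\mathbb{B}_{\mathsf{RO}}}(\vec a,\vec x,c)=\bigwedge_{b\in D}\beta_i^{\mathbb{B}_{\mathsf{RO}}}(\vec a,\vec x,b)$ and $\gamma_i^{\mathbb{B}_{\mathsf{RO}}}(\vec a,\vec x,c)=\bigvee_{b\in D}\gamma_i^{\mathbb{B}_{\mathsf{RO}}}(\vec a,\vec x,b)$, together with compactness, I would first find, for each $i$ separately, some $b_i\in D$ with $\beta_i^{\mathbb{B}_{\mathsf{RO}}}(\vec a,\vec x,b_i)\leq\gamma_i^{\mathbb{B}_{\mathsf{RO}}}(\vec a,\vec x,b_i)$: sandwich a clopen $w$ between the closed left-hand and open right-hand sides at $c$, then use compactness and the directedness of $D$ to push $\beta_i$ below $w$ and $\gamma_i$ above $w$ at a common $b_i$. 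Finally $a_0:=\bigwedge_{i}b_i\in D\subseteq\mathbb{B}_{\mathsf{FD}}$ satisfies $c\leq a_0$, and monotonicity and antitonicity yield $\beta_i^{\mathbb{B}_{\mathsf{RO}}}(\vec a,\vec x,a_0)\leq\beta_i^{\mathbb{B}_{\mathsf{RO}}}(\vec a,\vec x,b_i)\leq\gamma_i^{\mathbb{B}_{\mathsf{RO}}}(\vec a,\vec x,b_i)\leq\gamma_i^{\mathbb{B}_{\mathsf{RO}}}(\vec a,\vec x,a_0)$ for every $i$, which is (2).

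The main obstacle, I expect, is the nominal case of the closed/open lemma together with the continuity it underwrites. In the Kripke setting nominals denote completely join-irreducibles, which are manifestly closed; here they denote regular open closures of singletons, so the identity $\mathsf{ro}(\{w\})=\bigwedge\{X\in\mathsf{P}\mid w\in X\}$ must first be established from filter-descriptiveness. Once it is, the standard closed/open bookkeeping and the compactness extraction of a single admissible $a_0$ transcribe directly from \cite{PaSoZh15}.
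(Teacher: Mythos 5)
Your proposal is correct and follows essentially the same route as the paper: the standard topological Ackermann template of \cite{PaSoZh15,CoPa12} (syntactically closed/open formulas evaluate to closed/open elements, directed meet/join continuity in the $p$-coordinate, then compactness to extract an admissible $a_0$), with the single genuinely new ingredient being that pseudo-atoms are closed, established exactly as in the paper via the identity $\mathsf{ro}(\{w\})=\bigwedge\{X\in\mathsf{P}\mid w\in X\}$ derived from $\sqsubseteq$-tightness of filter-descriptive frames. In fact you spell out the compactness extraction of $a_0$ in more detail than the paper does, which simply defers those steps to the cited literature.
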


\begin{lemma}[Left-handed topological Ackermann lemma]\label{aLeft:Ack}

Let $\alpha$ be syntactically open, $p\notin \mathsf{Prop}(\alpha)$, let $\beta_1(p), \ldots, \beta_n(p)$ be syntactically closed and negative in $p$, let $\gamma_1(p), \ldots, \gamma_n(p)$ be syntactically open and positive in $p$, and let $\vec q, \vec \nomj$ be all the propositional variables, nominals, respectively, occurring in $\alpha, \beta_1(p), \ldots, \beta_n(p)$, $\gamma_1(p), \ldots, \gamma_n(p)$ other than $p$. Then for all $\vec a\in\mathbb{B}_{\mathsf{FD}}, \vec x\in \mathsf{PsAt}(\mathbb{B}_{\mathsf{RO}})$, the following are equivalent:
\begin{enumerate}
\item
$\beta_i^{\mathbb{B}_{\mathsf{RO}}}(\vec a, \vec x, \alpha^{\mathbb{B}_{\mathsf{RO}}}(\vec a, \vec x))\leq\gamma_i^{\mathbb{B}_{\mathsf{RO}}}(\vec a, \vec x, \alpha^{\mathbb{B}_{\mathsf{RO}}}(\vec a, \vec x))$ for $1\leq i\leq n$;
\item
There exists $a_0\in\mathbb{B}_{\mathsf{FD}}$ such that $a_0\leq\alpha^{\mathbb{B}_{\mathsf{RO}}}(\vec a, \vec x)$ and $\beta_i^{\mathbb{B}_{\mathsf{RO}}}(\vec a, \vec x, a_0)\leq\gamma_i^{\mathbb{B}_{\mathsf{RO}}}(\vec a, \vec x, a_0)$ for $1\leq i\leq n$.
\end{enumerate}
\end{lemma}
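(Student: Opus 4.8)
The plan is to prove the two directions of Lemma \ref{aLeft:Ack} separately, and to obtain the right-handed Lemma \ref{aRight:Ack} as its order-dual, so that only one of the two needs to be carried out in detail. Throughout I would exploit that $\mathbb{B}_{\mathsf{RO}}$ is the constructive canonical extension $(\mathbb{B}_{\mathsf{FD}})^{\delta}$, that the admissible elements $\vec a\in\mathbb{B}_{\mathsf{FD}}$ are precisely the clopen elements, and that by denseness and compactness every element of $\mathsf{K}(\mathbb{B}_{\mathsf{RO}})$ is a meet of admissibles and every element of $\mathsf{O}(\mathbb{B}_{\mathsf{RO}})$ a join of admissibles. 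The implication (2) $\Rightarrow$ (1) is pure order theory and needs no topology: if $a_0\le\alpha^{\mathbb{B}_{\mathsf{RO}}}(\vec a,\vec x)$, then antitonicity of each $\beta_i$ in $p$ and monotonicity of each $\gamma_i$ in $p$ give the chain $\beta_i(\ldots,\alpha)\le\beta_i(\ldots,a_0)\le\gamma_i(\ldots,a_0)\le\gamma_i(\ldots,\alpha)$, which is exactly (1).

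For the nontrivial implication (1) $\Rightarrow$ (2), the first step is a \emph{closed/open evaluation lemma}: under any assignment sending propositional variables into $\mathbb{B}_{\mathsf{FD}}$ and nominals into $\mathsf{PsAt}(\mathbb{B}_{\mathsf{RO}})$, every syntactically closed formula is interpreted in $\mathsf{K}(\mathbb{B}_{\mathsf{RO}})$ and every syntactically open formula in $\mathsf{O}(\mathbb{B}_{\mathsf{RO}})$. This would be proved by induction on formulas, using the standard interaction of the $\sigma$- and $\pi$-extensions of the operators with closed and open elements: $\Box_{\mathsf{RO}}$ and $\Diamondblack_{\mathsf{RO}}$ send closed to closed and open to open in the appropriate polarities, and dually for $\Boxblack_{\mathsf{RO}},\Diamond_{\mathsf{RO}}$ and negation, which in turn relies on these black connectives being the canonical extensions of their admissible restrictions. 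The genuinely new ingredient, and the step I expect to be the main obstacle, is the base case for nominals: since here nominals are interpreted not as atoms but as elements of $\mathsf{PsAt}(\mathbb{B}_{\mathsf{RO}})$, one must verify $\mathsf{PsAt}(\mathbb{B}_{\mathsf{RO}})\subseteq\mathsf{K}(\mathbb{B}_{\mathsf{RO}})$. Writing a pseudo-atom as $\mathsf{ro}(\{w\})$ for some $w\in W$, the key computation is $\mathsf{ro}(\{w\})=\bigcap\{X\in\mathsf{P}\mid w\in X\}$, i.e.\ a meet of admissibles: by $\sqsubseteq$-tightness of the filter-descriptive frame $\mathbb{F}_{\mathsf{FD}}$ one has $\Downarrow\{w\}=\{v\mid v\sqsubseteq w\}=\{v\mid(\forall X\in\mathsf{P})(w\in X\Rightarrow v\in X)\}=\bigcap\{X\in\mathsf{P}\mid w\in X\}$, and this set is regular open, hence coincides with $\mathsf{ro}(\{w\})$. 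Therefore each pseudo-atom is closed.

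With the evaluation lemma in hand, I would finish along the lines of \cite{PaSoZh15}. Since $\alpha$ is syntactically open, $\alpha^{\mathbb{B}_{\mathsf{RO}}}(\vec a,\vec x)$ is open, so $D:=\{a'\in\mathbb{B}_{\mathsf{FD}}\mid a'\le\alpha^{\mathbb{B}_{\mathsf{RO}}}(\vec a,\vec x)\}$ is up-directed with $\bigvee_{\mathsf{RO}}D=\alpha^{\mathbb{B}_{\mathsf{RO}}}(\vec a,\vec x)$. Substituting an open formula into the $p$-antitone positions of the syntactically closed $\beta_i$ keeps it syntactically closed, and into the $p$-monotone positions of the syntactically open $\gamma_i$ keeps it open; the standard Esakia/intersection lemmas for such terms then yield the continuity identities $\beta_i(\ldots,\alpha)=\bigwedge_{\mathsf{RO}}\{\beta_i(\ldots,a')\mid a'\in D\}$ and $\gamma_i(\ldots,\alpha)=\bigvee_{\mathsf{RO}}\{\gamma_i(\ldots,a')\mid a'\in D\}$. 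Expanding each closed $\beta_i(\ldots,a')$ as a meet of admissibles and each open $\gamma_i(\ldots,a')$ as a join of admissibles, the hypothesis $\beta_i(\ldots,\alpha)\le\gamma_i(\ldots,\alpha)$ becomes a meet-below-join inequality among admissibles, so compactness delivers a finite subselection; using up-directedness of $D$ together with the monotonicity of $\gamma_i$ and antitonicity of $\beta_i$, this finite data can be absorbed into a single $a_i^{*}\in D$ with $\beta_i(\ldots,a_i^{*})\le\gamma_i(\ldots,a_i^{*})$.

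Finally I would set $a_0:=\bigvee_{\mathsf{RO}}\{a_i^{*}\mid 1\le i\le n\}\in\mathbb{B}_{\mathsf{FD}}$, a finite join still below $\alpha^{\mathbb{B}_{\mathsf{RO}}}(\vec a,\vec x)$; monotonicity then gives $\beta_i(\ldots,a_0)\le\beta_i(\ldots,a_i^{*})\le\gamma_i(\ldots,a_i^{*})\le\gamma_i(\ldots,a_0)$ for every $i$, which is (2). This establishes the left-handed lemma, and Lemma \ref{aRight:Ack} follows by the order-dual argument. The decisive and novel point in the whole plan is the closedness of the pseudo-atoms interpreting the nominals, which is exactly where filter-descriptiveness (via $\sqsubseteq$-tightness) enters and which replaces the trivial closedness of atoms available in the classical Kripke setting.
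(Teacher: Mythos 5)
Your proposal is correct and follows essentially the same route as the paper, which likewise reduces the lemma to the standard directedness/compactness argument of the topological Ackermann lemmas in the cited literature and isolates $\mathsf{PsAt}(\mathbb{B}_{\mathsf{RO}})\subseteq\mathsf{K}(\mathbb{B}_{\mathsf{RO}})$ as the one genuinely new ingredient, proved exactly as you do from $\sqsubseteq$-tightness together with the facts that regular opens are down-sets and are closed under arbitrary intersections (the paper phrases this identity $\mathsf{ro}(\{w\})=\bigwedge\{a\in\mathbb{B}_{\mathsf{FD}}\mid w\in a\}$ via a short contradiction rather than through ${\Downarrow}\{w\}$, but the content is identical). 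The remaining supporting steps you sketch --- the closed/open evaluation lemma, the Esakia-type continuity identities, and the absorption of the finite compactness data into a single $a_i^{*}\in D$ --- correspond one-to-one to the auxiliary lemmas the paper lists before the statement.
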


In order to prove the topological Ackermann lemmas stated above, we need to prove some properties of the connectives and formulas, which are analogous to their counterparts in \cite[Section B]{PaSoZh15}. The proofs of the items are also similar, so we only provide the proofs for the items which have different proofs.

Indeed, the next lemma is the only one which needs a different proof, since elements in $\mathsf{PsAt}(\mathbb{B}_{\mathsf{RO}})$ are not necessarily complete join-irreducible elements.
\begin{lemma}
$\mathsf{PsAt}(\mathbb{B}_{\mathsf{RO}})\subseteq\mathsf{K}(\mathbb{B}_{\mathsf{RO}})$.
\end{lemma}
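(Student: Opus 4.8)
The plan is to reduce the statement to a concrete identification of the elements of $\mathsf{PsAt}(\mathbb{B}_{\mathsf{RO}})$ as intersections of admissible sets, and then to invoke $\sqsubseteq$-tightness of the filter-descriptive frame. First I would recall that the atoms of $\mathbb{B}_{\mathsf{K}}=(P(W),\ldots)$ are exactly the singletons $\{w\}$ with $w\in W$, so that every element of $\mathsf{PsAt}(\mathbb{B}_{\mathsf{RO}})$ has the form $c(\{w\})=\mathsf{ro}(\{w\})$ for some $w\in W$. Since $\mathbb{B}_{\mathsf{RO}}$ is, by \cite[Theorem 5.46]{Ho16}, the constructive canonical extension of $\mathbb{B}_{\mathsf{FD}}=\mathsf{P}$, an element is closed precisely when it is a meet in $\mathbb{B}_{\mathsf{RO}}$ (i.e.\ an intersection) of admissible subsets. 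It therefore suffices to exhibit each $\mathsf{ro}(\{w\})$ as such an intersection.

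The key step is to prove the identity
$$\mathsf{ro}(\{w\})=\bigcap\{X\in\mathsf{P}\mid w\in X\}.$$
For the inclusion $\subseteq$, every $X$ on the right is regular open and contains $w$, hence contains the least regular open set $\mathsf{ro}(\{w\})$ containing $\{w\}$; intersecting over all such $X$ preserves this. For the converse inclusion $\supseteq$, suppose $v$ lies in every admissible $X$ with $w\in X$; this says exactly that $(\forall X\in\mathsf{P})(w\in X\Rightarrow v\in X)$, so $\sqsubseteq$-tightness of $\mathbb{F}_{\mathsf{FD}}$ yields $v\sqsubseteq w$. As $\mathsf{ro}(\{w\})$ is a down-set (Proposition \ref{afacts:regular:open}) containing $w$, it contains every such $v$, giving $v\in\mathsf{ro}(\{w\})$.

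With the identity in hand, the right-hand side is a meet in $\mathbb{B}_{\mathsf{RO}}$ (where meets are intersections) of a family of elements of $\mathbb{B}_{\mathsf{FD}}$, hence is closed by definition; thus $\mathsf{ro}(\{w\})\in\mathsf{K}(\mathbb{B}_{\mathsf{RO}})$, and the inclusion $\mathsf{PsAt}(\mathbb{B}_{\mathsf{RO}})\subseteq\mathsf{K}(\mathbb{B}_{\mathsf{RO}})$ follows. I expect the only genuinely nontrivial point—and the one that distinguishes this setting from the usual complete-join-irreducible case—to be the $\supseteq$ direction, whose engine is precisely the $\sqsubseteq$-tightness built into filter-descriptive frames; without it one obtains only $\mathsf{ro}(\{w\})\subseteq\bigcap\{X\in\mathsf{P}\mid w\in X\}$, which would not close the argument.
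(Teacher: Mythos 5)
Your proof is correct and follows essentially the same route as the paper: both reduce the claim to the identity $\mathsf{ro}(\{w\})=\bigcap\{X\in\mathsf{P}\mid w\in X\}$, establish the easy inclusion by minimality of the regular open closure, and obtain the converse by using $\sqsubseteq$-tightness to get $v\sqsubseteq w$ and then the fact that $\mathsf{ro}(\{w\})$ is a down-set. The only cosmetic difference is that the paper phrases the converse inclusion as a proof by contradiction, whereas you argue it directly.
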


\begin{proof}
It suffices to show that for any element $w\in W$, $\mathsf{ro}(\{w\})=\bigwedge\{a\in\mathbb{B}_{\mathsf{FD}}\mid w\in a\}$. It is easy to see that $\mathsf{ro}(\{w\})\leq\bigwedge\{a\in\mathbb{B}_{\mathsf{FD}}\mid w\in a\}$. Suppose that the inequality is strict, then there exists a $v\in W$ such that $v\notin\mathsf{ro}(\{w\})$ and $v\in\bigwedge\{a\in\mathbb{B}_{\mathsf{FD}}\mid w\in a\}$. By Definition \ref{adef:filter:desctiptive:frame}, every filter-descriptive possibility frame is $\sqsubseteq$-tight, therefore $v\sqsubseteq w$. Since $\mathsf{ro}(\{w\})$ is downward closed, we have $v\in \mathsf{ro}(\{w\})$, a contradiction.
\end{proof}

\begin{lemma}
\label{acor:congenial top for whites and blacks} For all $c\in\mathsf{K}(\mathbb{B}_{\mathsf{RO}})$ and $o\in \mathsf{O}(\mathbb{B}_{\mathsf{RO}})$,

\begin{enumerate}
\item $\Box_{\mathsf{RO}}(c), \Diamond_{\mathsf{RO}}(c), \Diamondblack_{\mathsf{RO}}(c)\in\mathsf{K}(\mathbb{B}_{\mathsf{RO}})$;
\item $\Box_{\mathsf{RO}}(o), \Diamond_{\mathsf{RO}}(o), \Boxblack_{\mathsf{RO}}(o)\in\mathsf{O}(\mathbb{B}_{\mathsf{RO}})$.
\end{enumerate}
\end{lemma}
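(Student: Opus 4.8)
The plan is to reduce the six assertions to two pieces of information — the behaviour of $\Box_{\mathsf{RO}}$ on $\mathsf{K}(\mathbb{B}_{\mathsf{RO}})$ and $\mathsf{O}(\mathbb{B}_{\mathsf{RO}})$, and the behaviour of its left adjoint $\Diamondblack_{\mathsf{RO}}$ on $\mathsf{K}(\mathbb{B}_{\mathsf{RO}})$ — with everything else following by Boolean complementation. First I would record that $-_{\mathsf{RO}}$ is an order-reversing bijection on $\mathbb{B}_{\mathsf{RO}}$ taking meets of $\mathbb{B}_{\mathsf{FD}}$-elements to joins and vice versa, so it restricts to a bijection between $\mathsf{K}(\mathbb{B}_{\mathsf{RO}})$ and $\mathsf{O}(\mathbb{B}_{\mathsf{RO}})$. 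Since $\mathbb{B}_{\mathsf{RO}}$ is the (constructive) canonical extension of $\mathbb{B}_{\mathsf{FD}}$ and $\Box$ is meet-preserving, hence smooth, we have $\Box_{\mathsf{RO}}=\Box_{\mathsf{FD}}^{\sigma}=\Box_{\mathsf{FD}}^{\pi}$. Evaluating the $\sigma$-formula at a closed element yields $\Box_{\mathsf{RO}}(c)=\bigwedge\{\Box_{\mathsf{FD}}(a)\mid c\leq a\in\mathbb{B}_{\mathsf{FD}}\}$, a meet of clopens, hence closed; dually the $\pi$-formula at an open element gives $\Box_{\mathsf{RO}}(o)=\bigvee\{\Box_{\mathsf{FD}}(a)\mid a\leq o,\ a\in\mathbb{B}_{\mathsf{FD}}\}$, a join of clopens, hence open. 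The cases for $\Diamond_{\mathsf{RO}}$ then follow at once from $\Diamond_{\mathsf{RO}}=-_{\mathsf{RO}}\circ\Box_{\mathsf{RO}}\circ-_{\mathsf{RO}}$ together with the closed/open swap, and the case of $\Boxblack_{\mathsf{RO}}$ on open elements follows from the $\Diamondblack_{\mathsf{RO}}$-on-closed case via $\Boxblack_{\mathsf{RO}}=-_{\mathsf{RO}}\circ\Diamondblack_{\mathsf{RO}}\circ-_{\mathsf{RO}}$ (Proposition \ref{aprop:semantic:condition:black}).

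The remaining and genuinely new case is that $\Diamondblack_{\mathsf{RO}}(c)\in\mathsf{K}(\mathbb{B}_{\mathsf{RO}})$ for closed $c$, and this is where the difficulty lies. One cannot argue by restricting $\Diamondblack_{\mathsf{RO}}$ to $\mathbb{B}_{\mathsf{FD}}$, since $\mathbb{B}_{\mathsf{FD}}$ need not be closed under $\Diamondblack_{\mathsf{RO}}$. Instead I would set $m:=\bigwedge\{a\in\mathbb{B}_{\mathsf{FD}}\mid c\leq\Box_{\mathsf{RO}}(a)\}$, which is closed by construction, and prove $\Diamondblack_{\mathsf{RO}}(c)=m$. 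The inequality $\Diamondblack_{\mathsf{RO}}(c)\leq m$ is immediate from the adjunction $\Diamondblack_{\mathsf{RO}}\dashv\Box_{\mathsf{RO}}$: whenever $c\leq\Box_{\mathsf{RO}}(a)$ we get $\Diamondblack_{\mathsf{RO}}(c)\leq a$, so $\Diamondblack_{\mathsf{RO}}(c)$ is a lower bound of the defining family of $m$.

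For the reverse inequality $m\leq\Diamondblack_{\mathsf{RO}}(c)=\bigwedge\{b\in\mathbb{B}_{\mathsf{RO}}\mid c\leq\Box_{\mathsf{RO}}(b)\}$, I would use denseness to write each such $b$ as $b=\bigwedge\{o\in\mathsf{O}(\mathbb{B}_{\mathsf{RO}})\mid b\leq o\}$; by monotonicity of $\Box_{\mathsf{RO}}$ each such $o$ again satisfies $c\leq\Box_{\mathsf{RO}}(o)$, so it suffices to show $m\leq o$ for every open $o$ with $c\leq\Box_{\mathsf{RO}}(o)$. This is the point at which compactness enters. For such an $o$ the $\pi$-formula gives $\Box_{\mathsf{RO}}(o)=\bigvee\{\Box_{\mathsf{FD}}(a)\mid a\leq o,\ a\in\mathbb{B}_{\mathsf{FD}}\}$, a join of clopen elements, whereas $c$ is a meet of clopen elements; compactness then produces finitely many $a_1,\dots,a_k\in\mathbb{B}_{\mathsf{FD}}$ with each $a_j\leq o$ and $c\leq\Box_{\mathsf{FD}}(a_1)\vee\cdots\vee\Box_{\mathsf{FD}}(a_k)$. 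Setting $a^{\ast}:=a_1\vee\cdots\vee a_k\in\mathbb{B}_{\mathsf{FD}}$ and using monotonicity of $\Box$ (so that $\bigvee_j\Box_{\mathsf{FD}}(a_j)\leq\Box_{\mathsf{FD}}(a^{\ast})=\Box_{\mathsf{RO}}(a^{\ast})$), we get $a^{\ast}\leq o$ and $c\leq\Box_{\mathsf{RO}}(a^{\ast})$, whence $m\leq a^{\ast}\leq o$, as needed.

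I expect this compactness step — extracting a single admissible witness $a^{\ast}$ from an infinite join, and upgrading a finite disjunction of $\Box$-values to a single $\Box$-value via monotonicity — to be the crux of the whole lemma. Once it is in place, the closed/open behaviour of $\Box_{\mathsf{RO}}$ and the three complementation identities reduce the other five cases to routine bookkeeping.
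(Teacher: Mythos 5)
Your proof is correct and is essentially the standard argument that the paper invokes here only by reference: the paper gives no explicit proof of this lemma, stating that the proofs are ``analogous to their counterparts in \cite[Section B]{PaSoZh15}'' and singling out only the lemma $\mathsf{PsAt}(\mathbb{B}_{\mathsf{RO}})\subseteq\mathsf{K}(\mathbb{B}_{\mathsf{RO}})$ as requiring a new argument. Your decomposition --- evaluating $\Box_{\mathsf{RO}}=\Box_{\mathsf{FD}}^{\sigma}=\Box_{\mathsf{FD}}^{\pi}$ on closed/open elements to get meets/joins of clopens, dispatching $\Diamond_{\mathsf{RO}}$ and $\Boxblack_{\mathsf{RO}}$ by Boolean complementation, and using compactness plus monotonicity to extract the single admissible witness $a^{*}$ showing $\Diamondblack_{\mathsf{RO}}(c)=\bigwedge\{a\in\mathbb{B}_{\mathsf{FD}}\mid c\leq\Box_{\mathsf{RO}}(a)\}$ --- is exactly the standard topological-Ackermann machinery being cited, and all the steps check out (including the degenerate cases where the finite subsets produced by compactness are empty).
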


\begin{lemma}\label{aUpward-Directed}

Let $\mathcal{C}=\{c_i:i\in I\}\subseteq\mathsf{K}(\mathbb{B}_{\mathsf{RO}})$ be non-empty and downward-directed, $\mathcal{O}=\{o_i:i\in I\}\subseteq\mathsf{O}(\mathbb{B}_{\mathsf{RO}})$ be non-empty and upward-directed, then
\begin{enumerate}
\item $\Box_{\mathsf{RO}}(\bigvee\mathcal{O})=\bigvee\{\Box_{\mathsf{RO}}(o): o\in\mathcal{O}\}$;
\item $\Diamond_{\mathsf{RO}}(\bigwedge\mathcal{C})=\bigwedge\{\Diamond_{\mathsf{RO}}(c): c\in\mathcal{C}\}$;
\item $\Boxblack_{\mathsf{RO}}(\bigvee\mathcal{O})=\bigvee\{\Boxblack_{\mathsf{RO}}(o): o\in\mathcal{O}\}$;
\item $\Diamondblack_{\mathsf{RO}}(\bigwedge\mathcal{C})=\bigwedge\{\Diamondblack_{\mathsf{RO}}(c): c\in\mathcal{C}\}$.
\end{enumerate}
\end{lemma}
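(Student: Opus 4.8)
The plan is to prove all four identities by a single scheme, reducing each to the compactness and denseness of the canonical extension $\mathbb{B}_{\mathsf{RO}}=\mathbb{B}_{\mathsf{FD}}^{\delta}$, together with the two adjunctions $\Diamondblack_{\mathsf{RO}}\dashv\Box_{\mathsf{RO}}$ and $\Diamond_{\mathsf{RO}}\dashv\Boxblack_{\mathsf{RO}}$ and the closed/open preservation properties of Lemma~\ref{acor:congenial top for whites and blacks}. In each item one of the two inequalities is immediate from monotonicity of the operator (every member of the directed family lies below $\bigvee\mathcal{O}$, resp.\ above $\bigwedge\mathcal{C}$), so the whole content lies in the reverse inequality. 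The black connectives must be handled through their adjoints, because $\mathsf{P}=\mathbb{B}_{\mathsf{FD}}$ need not be closed under $\Diamondblack_{\mathsf{RO}}$ and $\Boxblack_{\mathsf{RO}}$, so these are not $\sigma/\pi$-extensions of base maps on $\mathbb{B}_{\mathsf{FD}}$ and the generic smoothness argument does not apply to them directly.

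The auxiliary observation I would isolate first is a \emph{directed compactness} fact in two dual forms: (a) if $k\in\mathsf{K}(\mathbb{B}_{\mathsf{RO}})$ and $k\leq\bigvee\mathcal{O}$ with $\mathcal{O}$ upward-directed in $\mathsf{O}(\mathbb{B}_{\mathsf{RO}})$, then $k\leq o$ for some $o\in\mathcal{O}$; and (b) if $o^{*}\in\mathsf{O}(\mathbb{B}_{\mathsf{RO}})$ and $\bigwedge\mathcal{C}\leq o^{*}$ with $\mathcal{C}$ downward-directed in $\mathsf{K}(\mathbb{B}_{\mathsf{RO}})$, then $c\leq o^{*}$ for some $c\in\mathcal{C}$. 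For (a) I would write $k=\bigwedge S$ and $\bigvee\mathcal{O}=\bigvee T$ with $S,T\subseteq\mathbb{B}_{\mathsf{FD}}$ and $T=\bigcup_{o\in\mathcal{O}}\{b\in\mathbb{B}_{\mathsf{FD}}\mid b\leq o\}$; compactness yields finite $F\subseteq S$ and $G\subseteq T$ with $\bigwedge F\leq\bigvee G$, and upward-directedness of $\mathcal{O}$ collapses the finitely many opens dominating the members of $G$ to a single $o\in\mathcal{O}$, so that $k\leq\bigwedge F\leq\bigvee G\leq o$. Item (b) is proved dually, using downward-directedness of $\mathcal{C}$ to collapse the finitely many closeds.

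With these in hand each identity follows by a short adjunction argument, which I would spell out for item (1). By Lemma~\ref{acor:congenial top for whites and blacks} the element $\Box_{\mathsf{RO}}(\bigvee\mathcal{O})$ is open, so by denseness it equals the join of the clopens below it; hence it suffices to show $a\leq\bigvee_{o\in\mathcal{O}}\Box_{\mathsf{RO}}(o)$ for each $a\in\mathbb{B}_{\mathsf{FD}}$ with $a\leq\Box_{\mathsf{RO}}(\bigvee\mathcal{O})$. Adjunction rewrites the hypothesis as $\Diamondblack_{\mathsf{RO}}(a)\leq\bigvee\mathcal{O}$, and since $a$ is clopen, $\Diamondblack_{\mathsf{RO}}(a)\in\mathsf{K}(\mathbb{B}_{\mathsf{RO}})$ by Lemma~\ref{acor:congenial top for whites and blacks}. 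Directed compactness (a) then gives $o\in\mathcal{O}$ with $\Diamondblack_{\mathsf{RO}}(a)\leq o$, and a second use of adjunction yields $a\leq\Box_{\mathsf{RO}}(o)\leq\bigvee_{o'\in\mathcal{O}}\Box_{\mathsf{RO}}(o')$, as required.

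The remaining three items are entirely symmetric. Item (3) uses the same orientation but the pair $\Diamond_{\mathsf{RO}}\dashv\Boxblack_{\mathsf{RO}}$: for a clopen $a\leq\Boxblack_{\mathsf{RO}}(\bigvee\mathcal{O})$ one passes to $\Diamond_{\mathsf{RO}}(a)\leq\bigvee\mathcal{O}$ with $\Diamond_{\mathsf{RO}}(a)$ closed, applies (a), and returns via adjunction to $a\leq\Boxblack_{\mathsf{RO}}(o)$. Items (2) and (4) are the order-duals: for a clopen $a\geq\Diamond_{\mathsf{RO}}(\bigwedge\mathcal{C})$ (resp.\ $a\geq\Diamondblack_{\mathsf{RO}}(\bigwedge\mathcal{C})$) adjunction gives $\bigwedge\mathcal{C}\leq\Boxblack_{\mathsf{RO}}(a)$ (resp.\ $\bigwedge\mathcal{C}\leq\Box_{\mathsf{RO}}(a)$), whose right-hand side is open by Lemma~\ref{acor:congenial top for whites and blacks} (note $\Box_{\mathsf{RO}}(a)$ is in fact clopen); directed compactness (b) then produces $c\in\mathcal{C}$ below it, and adjunction yields $\Diamond_{\mathsf{RO}}(c)\leq a$ (resp.\ $\Diamondblack_{\mathsf{RO}}(c)\leq a$), so that the closed meet on the left is $\leq a$. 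The step requiring the most care—and the reason this is not a corollary of generic smoothness—is precisely that $\Diamondblack_{\mathsf{RO}}$ and $\Boxblack_{\mathsf{RO}}$ live only on $\mathbb{B}_{\mathsf{RO}}$; routing everything through the two adjunctions and the closed/open preservation of Lemma~\ref{acor:congenial top for whites and blacks} is exactly what keeps all relevant elements in $\mathsf{K}(\mathbb{B}_{\mathsf{RO}})$ or $\mathsf{O}(\mathbb{B}_{\mathsf{RO}})$ so that compactness of $\mathbb{B}_{\mathsf{FD}}^{\delta}$ can still be invoked.
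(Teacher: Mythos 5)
Your argument is correct: the paper itself states this lemma without proof, deferring to the analogous topological Ackermann lemmas in the cited literature, and your proof is essentially that standard argument --- directed compactness of the constructive canonical extension $\mathbb{B}_{\mathsf{RO}}=\mathbb{B}_{\mathsf{FD}}^{\delta}$ combined with the adjunctions and the closed/open preservation of Lemma \ref{acor:congenial top for whites and blacks}. The only (harmless) stylistic difference is that you route all four items uniformly through the adjoints, whereas the usual presentation handles $\Box$ and $\Diamond$ directly from the $\sigma/\pi$-extension formulas and reserves the adjunction detour for the black connectives; both routes check out, and your observation that the black connectives cannot be treated as extensions of base maps on $\mathbb{B}_{\mathsf{FD}}$ is exactly the right point of care.
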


\begin{lemma}\label{alemma:formula:close:open}
Let $\phi(p)$ be syntactically closed and $\psi(p)$ syntactically open, and let $\vec p, \vec \nomj$ be all the propositional variables, nominals, respectively, occuring in $\phi(p)$ and $\psi(p)$ other than $p$, $\vec a\in\mathbb{B}_{\mathsf{FD}}, \vec x\in\mathsf{PsAt}(\mathbb{B}_{\mathsf{RO}}), c\in\mathsf{K}(\mathbb{B}_{\mathsf{RO}}), o\in\mathsf{O}(\mathbb{B}_{\mathsf{RO}})$, then
\begin{enumerate}
\item If $\phi(p)$ is positive in $p$, then $\phi^{\mathbb{B}_{\mathsf{RO}}}(\vec a,\vec x,c)\in\mathsf{K}(\mathbb{B}_{\mathsf{RO}})$;
\item If $\psi(p)$ is negative in $p$, then $\psi^{\mathbb{B}_{\mathsf{RO}}}(\vec a,\vec x,c)\in\mathsf{O}(\mathbb{B}_{\mathsf{RO}})$;
\item If $\phi(p)$ is negative in $p$, then $\phi^{\mathbb{B}_{\mathsf{RO}}}(\vec a,\vec x,o)\in\mathsf{K}(\mathbb{B}_{\mathsf{RO}})$;
\item If $\psi(p)$ is positive in $p$, then $\psi^{\mathbb{B}_{\mathsf{RO}}}(\vec a,\vec x,o)\in\mathsf{O}(\mathbb{B}_{\mathsf{RO}})$.
\end{enumerate}
\end{lemma}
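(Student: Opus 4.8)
The plan is to prove all four items simultaneously by induction on the complexity of the formula, since the clauses are linked through negation, which both exchanges the roles of $\mathsf{K}(\mathbb{B}_{\mathsf{RO}})$ and $\mathsf{O}(\mathbb{B}_{\mathsf{RO}})$ and flips the sign of $p$. Throughout I would rely on the standard closure properties of closed and open elements in a (constructive) canonical extension: $\mathsf{K}(\mathbb{B}_{\mathsf{RO}})$ is closed under arbitrary meets and finite joins, $\mathsf{O}(\mathbb{B}_{\mathsf{RO}})$ is closed under arbitrary joins and finite meets, and the Boolean complement $-_{\mathsf{RO}}$ restricts to a bijection between $\mathsf{K}(\mathbb{B}_{\mathsf{RO}})$ and $\mathsf{O}(\mathbb{B}_{\mathsf{RO}})$.

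For the base cases: when the formula is the distinguished variable $p$, its value is exactly the plugged-in argument, namely $c\in\mathsf{K}(\mathbb{B}_{\mathsf{RO}})$ in items (1)--(2) and $o\in\mathsf{O}(\mathbb{B}_{\mathsf{RO}})$ in items (3)--(4); since a bare $p$ is positive in $p$, only items (1) and (4) have a non-vacuous hypothesis, and in both the conclusion matches the argument type. When the formula is a propositional variable $q\neq p$, its value is some $a_q\in\mathbb{B}_{\mathsf{FD}}$, which is clopen and hence lies in both $\mathsf{K}(\mathbb{B}_{\mathsf{RO}})$ and $\mathsf{O}(\mathbb{B}_{\mathsf{RO}})$. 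When the formula is a nominal $\nomi$, its value lies in $\mathsf{PsAt}(\mathbb{B}_{\mathsf{RO}})\subseteq\mathsf{K}(\mathbb{B}_{\mathsf{RO}})$ by the preceding lemma; this is consistent with the definitions, since a bare nominal is syntactically closed (and only items (1),(3), where the formula is required to be closed, are applicable to it), while a syntactically open formula forbids a positively occurring nominal.

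For the inductive step I would treat the connectives using monotonicity together with the closure facts. Conjunction and disjunction are handled by closure of $\mathsf{K}(\mathbb{B}_{\mathsf{RO}})$ and $\mathsf{O}(\mathbb{B}_{\mathsf{RO}})$ under finite meets and finite joins. Negation is handled by $-_{\mathsf{RO}}$ exchanging closed and open, applying the induction hypothesis to the immediate subformula whose sign, syntactic type, and plugged-in argument type have all been swapped; this is the step that ties item (1) to item (2) and item (3) to item (4). Implication reduces to these via $\gamma\to\delta=\neg\gamma\vee\delta$. For the modalities I would invoke Lemma \ref{acor:congenial top for whites and blacks}: $\Box_{\mathsf{RO}}$ and $\Diamond_{\mathsf{RO}}$ preserve both $\mathsf{K}(\mathbb{B}_{\mathsf{RO}})$ and $\mathsf{O}(\mathbb{B}_{\mathsf{RO}})$ and so impose no constraint, whereas $\Diamondblack_{\mathsf{RO}}$ preserves only $\mathsf{K}(\mathbb{B}_{\mathsf{RO}})$ and $\Boxblack_{\mathsf{RO}}$ only $\mathsf{O}(\mathbb{B}_{\mathsf{RO}})$; since all four operators are monotone, the sign of $p$ is inherited by the subformula and the induction hypothesis applies to the correct argument type.

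The step needing the most care is the bookkeeping that aligns the syntactic restrictions with the one-sided preservation of the black connectives. The definition of syntactically closed (resp.\ open) forces $\Diamondblack$ to occur only positively (resp.\ negatively) and $\Boxblack$ only negatively (resp.\ positively); I would verify that under this discipline any syntactically closed subformula sitting under $\Diamondblack_{\mathsf{RO}}$ has closed value, so that only the $\mathsf{K}$-preservation half of Lemma \ref{acor:congenial top for whites and blacks} is ever invoked for $\Diamondblack_{\mathsf{RO}}$, and dually that $\Boxblack_{\mathsf{RO}}$ is only ever applied to open values. The analogous point arises at the nominal base case, where only $\mathsf{PsAt}(\mathbb{B}_{\mathsf{RO}})\subseteq\mathsf{K}(\mathbb{B}_{\mathsf{RO}})$ is available and no companion inclusion into $\mathsf{O}(\mathbb{B}_{\mathsf{RO}})$ holds. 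Once this sign discipline is tracked consistently across the four simultaneously-proved statements, every inductive case reduces to a single application of the appropriate closure property, and the argument closes exactly as the corresponding lemma of \cite{PaSoZh15}.
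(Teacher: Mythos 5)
Your proposal is correct and follows essentially the same route as the paper, which does not spell out this proof but explicitly defers to the analogous simultaneous induction in \cite[Section B]{PaSoZh15}: induction on formula complexity proving the four items together, using the closure properties of $\mathsf{K}(\mathbb{B}_{\mathsf{RO}})$ and $\mathsf{O}(\mathbb{B}_{\mathsf{RO}})$, the preceding lemmas for nominals and for the (black) modalities, and the sign discipline imposed by syntactic closedness/openness. Your bookkeeping of the base cases (in particular that a bare nominal can only arise in the syntactically closed items, via $\mathsf{PsAt}(\mathbb{B}_{\mathsf{RO}})\subseteq\mathsf{K}(\mathbb{B}_{\mathsf{RO}})$) and of the one-sided preservation of $\Diamondblack_{\mathsf{RO}}$ and $\Boxblack_{\mathsf{RO}}$ is exactly the point the paper relies on.
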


\begin{lemma}\label{aMJ:Pres}
Let $\phi(p)$ be syntactically closed, $\psi(p)$ be syntactically open, and let $\vec p, \vec \nomj$ be all the proposition variables, nominals, respectively, occuring in $\phi(p)$ and $\psi(p)$ other than $p$, $\vec a\in\mathbb{B}_{\mathsf{FD}}, \vec x\in\mathsf{PsAt}(\mathbb{B}_{\mathsf{RO}})$, $\{c_i : i\in I\}\subseteq \mathsf{K}(\mathbb{B}_{\mathsf{RO}})$ be non-empty and downward-directed, $\{o_i : i\in I\}\subseteq \mathsf{O}(\mathbb{B}_{\mathsf{RO}})$ be non-empty and upward-directed, then
\begin{enumerate}
\item If $\phi(p)$ is positive in $p$, then $\phi^{\mathbb{B}_{\mathsf{RO}}}(\vec a, \vec x, \bigwedge\{c_i : i\in I\})=\bigwedge\{\phi^{\mathbb{B}_{\mathsf{RO}}}(\vec a, \vec x, c_i): i\in I\}$;
\item If $\psi(p)$ is negative in $p$, then $\psi^{\mathbb{B}_{\mathsf{RO}}}(\vec a, \vec x, \bigwedge\{c_i : i\in I\})=\bigvee\{\psi^{\mathbb{B}_{\mathsf{RO}}}(\vec a, \vec x, c_i): i\in I\}$;
\item If $\phi(p)$ is negative in $p$, then $\phi^{\mathbb{B}_{\mathsf{RO}}}(\vec a, \vec x, \bigvee\{o_i : i\in I\})=\bigwedge\{\phi^{\mathbb{B}_{\mathsf{RO}}}(\vec a, \vec x, o_i): i\in I\}$;
\item If $\psi(p)$ is positive in $p$, then $\psi^{\mathbb{B}_{\mathsf{RO}}}(\vec a, \vec x, \bigvee\{o_i : i\in I\})=\bigvee\{\psi^{\mathbb{B}_{\mathsf{RO}}}(\vec a, \vec x, o_i): i\in I\}$.
\end{enumerate}
\end{lemma}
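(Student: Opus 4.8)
The plan is to prove all four items simultaneously by induction on the complexity of the formula. This is forced by the primitive syntax of $\mathcal{L}^{+}$, whose connectives are exactly $p,\nomi,\neg,\land,\Box$ and $\Diamondblack$: the negation clause of item (1) appeals to item (2) and conversely, so the statements cannot be separated. Moreover, items (3) and (4) are the order-duals of (1) and (2): applying (1),(2) inside $(\mathbb{B}_{\mathsf{RO}})^{\partial}$, where closed and open elements swap roles, $\bigwedge$ and $\bigvee$ swap, and positivity/negativity in $p$ is preserved, yields (3),(4). Hence I would spell out only (1) and (2). Throughout, I would maintain two invariants secured by Lemma~\ref{alemma:formula:close:open}: for item (1) each subformula positive in $p$ evaluates into $\mathsf{K}(\mathbb{B}_{\mathsf{RO}})$ and, being monotone in the last coordinate, sends the downward-directed family $\{c_i\}$ to a downward-directed family; dually for item (2) the values lie in $\mathsf{O}(\mathbb{B}_{\mathsf{RO}})$ and the families are upward-directed, as antitone images of $\{c_i\}$.

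For the base cases of item (1): if the distinguished atom is $p$ itself, both sides equal $\bigwedge_i c_i$; if it is a variable $q\neq p$ or a nominal $\nomi$, the value is constant in the last coordinate, so both sides collapse to that constant using only $I\neq\emptyset$. The sign discipline is automatically consistent, since $p$ and $\nomi$ are syntactically closed and positive in $p$ and thus occur only in item (1). For the $\land$-case both conjuncts are positive in $p$ and syntactically closed; applying the induction hypothesis to each and using that binary meet distributes over arbitrary meets in any complete lattice produces the double meet $\bigwedge_{i,j}\bigl(\phi_1^{\mathbb{B}_{\mathsf{RO}}}(\vec a,\vec x,c_i)\wedge\phi_2^{\mathbb{B}_{\mathsf{RO}}}(\vec a,\vec x,c_j)\bigr)$, which collapses to the diagonal $\bigwedge_i$ because downward-directedness of the two families makes $\{(i,i)\}$ cofinal. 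For the $\neg$-case $\phi=\neg\phi_1$ the subformula $\phi_1$ is syntactically open and negative in $p$, so item (2) gives $\phi_1^{\mathbb{B}_{\mathsf{RO}}}(\vec a,\vec x,\bigwedge_i c_i)=\bigvee_i\phi_1^{\mathbb{B}_{\mathsf{RO}}}(\vec a,\vec x,c_i)$, and since $-_{\mathsf{RO}}$ is a Boolean anti-isomorphism it carries this join to $\bigwedge_i\neg\phi_1^{\mathbb{B}_{\mathsf{RO}}}(\vec a,\vec x,c_i)$, as required. Item (2) is treated symmetrically, its $\lor$-like and $\neg$-steps using distributivity of binary join over arbitrary joins, cofinality of the diagonal in the upward-directed families, and De Morgan for $-_{\mathsf{RO}}$.

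The operator cases are where the preceding lemmas carry the weight. In item (1): for $\phi=\Box\phi_1$ I would push $\Box_{\mathsf{RO}}$ through the meet using that it is completely meet-preserving (no directedness needed); for $\phi=\Diamondblack\phi_1$ I would invoke Lemma~\ref{aUpward-Directed}(4), applicable because $\{\phi_1^{\mathbb{B}_{\mathsf{RO}}}(\vec a,\vec x,c_i)\}$ is a non-empty downward-directed family of closed elements (Lemmas~\ref{acor:congenial top for whites and blacks} and \ref{alemma:formula:close:open}). In item (2): for $\psi=\Box\psi_1$ the family $\{\psi_1^{\mathbb{B}_{\mathsf{RO}}}(\vec a,\vec x,c_i)\}$ is now non-empty upward-directed in $\mathsf{O}(\mathbb{B}_{\mathsf{RO}})$, so $\Box_{\mathsf{RO}}$ distributes over the join by Lemma~\ref{aUpward-Directed}(1); for $\psi=\Diamondblack\psi_1$ I would use that $\Diamondblack_{\mathsf{RO}}$, as a left adjoint, is completely join-preserving. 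I expect the main obstacle to be bookkeeping rather than any single hard step: at each inductive stage one must verify both that the hypotheses of Lemma~\ref{aUpward-Directed} genuinely hold (directedness together with membership in $\mathsf{K}(\mathbb{B}_{\mathsf{RO}})$ resp.\ $\mathsf{O}(\mathbb{B}_{\mathsf{RO}})$, exactly what Lemmas~\ref{acor:congenial top for whites and blacks} and \ref{alemma:formula:close:open} provide) and that the polarity of the subformula matches the item being proved, so that the correct directed half of each operator's preservation behaviour is the one invoked. Once this discipline is maintained the induction closes uniformly, and the duality remark transfers everything to items (3) and (4).
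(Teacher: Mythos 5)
Your proposal is correct and is essentially the proof the paper intends: the paper omits the argument for this lemma, deferring to the analogous topological Ackermann lemmas in the literature, and those are proved by exactly the simultaneous structural induction you describe, with Lemma \ref{alemma:formula:close:open} and the closure of $\mathsf{K}(\mathbb{B}_{\mathsf{RO}})$ and $\mathsf{O}(\mathbb{B}_{\mathsf{RO}})$ under the modal operators supplying the directedness-plus-membership hypotheses of Lemma \ref{aUpward-Directed} at the $\Diamondblack$- and $\Box$-steps. The only blemishes are cosmetic: in the $\land$-case of item (2) the distributivity you need is of binary \emph{meet} over arbitrary joins (not binary join over joins), and the case $\psi=\Diamondblack\psi_1$ in item (2) is vacuous, since a syntactically open formula cannot have a positive (root) occurrence of $\Diamondblack$.
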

\begin{cor}\label{aClose:to:Close}

Let $\phi$ be syntactically closed and $\psi$ syntactically open, and let $\vec p, \vec \nomj$ be all the proposition variables, nominals, respectively, occurring in $\phi$ and $\psi$, $\vec a\in\mathbb{B}_{\mathsf{FD}}, \vec x\in\mathsf{PsAt}(\mathbb{B}_{\mathsf{RO}})$. Then

\begin{enumerate}
\item $\phi^{\bbas}(\vec a, \vec x, \vec y)\in\mathsf{K}(\mathbb{B}_{\mathsf{RO}})$;
\item $\psi^{\bbas}(\vec a, \vec x, \vec y)\in\mathsf{O}(\mathbb{B}_{\mathsf{RO}})$.
\end{enumerate}
\end{cor}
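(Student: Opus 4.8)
The plan is to obtain Corollary~\ref{aClose:to:Close} as the parameter-free instance of Lemma~\ref{alemma:formula:close:open}, run as a single simultaneous induction on the complexity of the formula. Since negation and implication interchange syntactically closed and syntactically open formulas, I would prove clauses (1) and (2) together: by induction I show that whenever the propositional variables of a formula are assigned clopen values $\vec a\in\mathbb{B}_{\mathsf{FD}}$ and its nominals are assigned closed values $\vec x\in\mathsf{PsAt}(\mathbb{B}_{\mathsf{RO}})$, a syntactically closed formula evaluates into $\mathsf{K}(\mathbb{B}_{\mathsf{RO}})$ and a syntactically open one into $\mathsf{O}(\mathbb{B}_{\mathsf{RO}})$. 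The ingredients are: the lemma $\mathsf{PsAt}(\mathbb{B}_{\mathsf{RO}})\subseteq\mathsf{K}(\mathbb{B}_{\mathsf{RO}})$ proved above; the fact that every element of $\mathbb{B}_{\mathsf{FD}}$ is clopen in its canonical extension $\mathbb{B}_{\mathsf{RO}}$, hence lies in $\mathsf{K}(\mathbb{B}_{\mathsf{RO}})\cap\mathsf{O}(\mathbb{B}_{\mathsf{RO}})$; Lemma~\ref{acor:congenial top for whites and blacks}; and the standard closure properties of $\mathsf{K}$ and $\mathsf{O}$ in a canonical extension ($\mathsf{K}$ closed under arbitrary meets and finite joins, $\mathsf{O}$ under arbitrary joins and finite meets, with complementation sending $\mathsf{K}$ to $\mathsf{O}$ and vice versa).

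For the base cases, a propositional variable is assigned an element of $\mathbb{B}_{\mathsf{FD}}$, which is clopen and therefore simultaneously in $\mathsf{K}(\mathbb{B}_{\mathsf{RO}})$ and $\mathsf{O}(\mathbb{B}_{\mathsf{RO}})$; this covers a variable occurrence in either a syntactically closed or a syntactically open formula. A nominal is assigned a value in $\mathsf{PsAt}(\mathbb{B}_{\mathsf{RO}})\subseteq\mathsf{K}(\mathbb{B}_{\mathsf{RO}})$, and by Definition~\ref{adef:closed:open} nominals can occur only positively in a syntactically closed formula, so this is exactly the case that requires a closed value. The constants $\top,\bot\in\mathbb{B}_{\mathsf{FD}}$ are clopen.

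For the induction step I would argue connective by connective, reading off the signs of the immediate subformulas from Definition~\ref{adef: signed gen tree}. For $\wedge$ and $\vee$ the subformulas inherit the sign of the parent, so in the closed case I take a finite join or meet of closed elements and in the open case a finite meet or join of open elements, each of which stays in the appropriate class. For $\neg$ the child has the opposite sign, so a syntactically closed $\neg\chi$ has $\chi$ syntactically open with open value, whose complement is closed, and dually. For $\to$ I use that its first argument is signed oppositely and its second argument identically to the root, together with $X\to Y=-X\vee Y$. For $\Box$ and $\Diamond$ the signs are preserved and I invoke Lemma~\ref{acor:congenial top for whites and blacks}(1)--(2) directly.

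The delicate point, and the one I would flag as the main obstacle, is the treatment of the black connectives, where Lemma~\ref{acor:congenial top for whites and blacks} only supplies $\Diamondblack_{\mathsf{RO}}(c)\in\mathsf{K}(\mathbb{B}_{\mathsf{RO}})$ and $\Boxblack_{\mathsf{RO}}(o)\in\mathsf{O}(\mathbb{B}_{\mathsf{RO}})$, i.e.\ preservation in only one direction each. This is sufficient precisely because Definition~\ref{adef:closed:open} forces every occurrence of $\Diamondblack$ in a syntactically closed formula to be positive and every occurrence of $\Boxblack$ in a syntactically open formula to be positive: an outermost $\Diamondblack$ in a syntactically closed formula is therefore applied to a syntactically closed (hence closed-valued) subformula, so the single available direction $\Diamondblack_{\mathsf{RO}}(c)\in\mathsf{K}(\mathbb{B}_{\mathsf{RO}})$ always applies, and dually for $\Boxblack$. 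I would make this bookkeeping explicit, since it is the only place where the sign constraints in the definition of syntactically closed and open are genuinely used. A minor thing to note in passing is that the joins appearing above are the regular-open joins of $\mathbb{B}_{\mathsf{RO}}$ rather than set unions, but this is harmless: closedness and openness are defined relative to the complete Boolean algebra $\mathbb{B}_{\mathsf{RO}}$ itself, and the closure properties of $\mathsf{K}$ and $\mathsf{O}$ are stated for exactly those operations.
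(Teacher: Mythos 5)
Your proof is correct and takes essentially the same approach as the paper: the corollary is stated there without proof, deferring to the analogous simultaneous structural induction in the cited literature, which is precisely the argument you lay out, including the key observation that the sign constraints of Definition \ref{adef:closed:open} are exactly what makes the one-directional preservation facts for $\Diamondblack_{\mathsf{RO}}$ and $\Boxblack_{\mathsf{RO}}$ sufficient. Your base case correctly isolates the only ingredient the paper does prove explicitly in this section, namely $\mathsf{PsAt}(\mathbb{B}_{\mathsf{RO}})\subseteq\mathsf{K}(\mathbb{B}_{\mathsf{RO}})$.
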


\subsection{Results}\label{aSec:results:topo}

Using the soundness of the algorithm ALBA with respect to filter-descriptive possibility frames, we have the following results:

\begin{theorem}
For any inequality $\phi\leq\psi$ in the basic modal language $\mathcal{L}$ on which ALBA succeeds and outputs $\mathsf{FO}(\phi\leq\psi)$, $\phi\leq\psi$ corresponds to $\mathsf{FO}(\phi\leq\psi)$ in the sense that for any filter-descriptive possibility frame $\mathbb{F}$, $\mathbb{F}\vDash\phi\leq\psi$ if and only if $\mathbb{F}\vDash\mathsf{FO}(\phi\leq\psi)$.
\end{theorem}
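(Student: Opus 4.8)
The plan is to run the same ``U-shaped'' argument as in Theorem~\ref{aCrctns:Theorem}, but now along the left arm of the diagram on page~\pageref{atable:U:shape:descriptive}, i.e.\ with the admissible notion of validity $\vDash_{\mathsf{FD}}$ in place of the full validity $\vDash$. Writing $\mathbb{B}_{\mathsf{RO}}$ for the dual BAO of the underlying full possibility frame $\mathbb{F}_{\mathsf{RO}}$ of $\mathbb{F}=\mathbb{F}_{\mathsf{FD}}$ (which is well-defined by Remark~\ref{aremark:underlying:full}) and $\mathbb{B}_{\mathsf{FD}}$ for its admissible sub-BAO, I would establish the chain
$$\mathbb{F}_{\mathsf{FD}}\vDash\phi\leq\psi \;\Leftrightarrow\; \mathbb{B}_{\mathsf{RO}}\vDash_{\mathsf{FD}}\phi\leq\psi \;\Leftrightarrow\;\cdots\;\Leftrightarrow\;\mathbb{B}_{\mathsf{RO}}\vDash_{\mathsf{FD}}\mathsf{ALBA}(\phi\leq\psi)\;\Leftrightarrow\;\mathbb{F}_{\mathsf{FD}}\vDash\mathsf{FO}(\phi\leq\psi),$$
mirroring the passage from~(\ref{aCrct:Eqn0}) to~(\ref{aCrct:Eqn6}). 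The first biconditional is the hybrid-dual-BAO item of Proposition~\ref{aprop:equiv:duality:expanded}, and the last one combines Proposition~\ref{aprop:equiv:duality:expanded} with the regular open translation Proposition~\ref{aprop:translation}, together with the observation that $\mathsf{FO}(\phi\leq\psi)$ contains no unary predicate symbols, so its validity is insensitive to the admissible set and may be read off either $\mathbb{F}_{\mathsf{FD}}$ or $\mathbb{F}_{\mathsf{RO}}$.

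The heart of the argument is the soundness of each reduction rule under $\vDash_{\mathsf{FD}}$. First I would observe that the preprocessing rules, the splitting rules, the residuation rules and all four approximation rules are sound by the very same arguments as in the full case: the approximation rules only invoke the join-density of $\mathsf{PsAt}(\mathbb{B}_{\mathsf{RO}})$ in $\mathbb{B}_{\mathsf{RO}}$ (Proposition~\ref{aprop:regular:open:join}) and complete distributivity, while the residuation rules only use the relevant adjunctions; none of these appeals to admissibility of the assignment, since the interpretations of nominals and of the black connectives still land in $\mathbb{B}_{\mathsf{RO}}$, and any compound formula is interpreted in $\mathbb{B}_{\mathsf{RO}}$ regardless of whether the propositional variables are assigned admissible values. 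Thus the only place where admissibility bites is the Ackermann rules: the discrete Ackermann Lemmas~\ref{aRight:Ack:discrete} and~\ref{aLeft:Ack:discrete} would only produce a minimal valuation $a_0\in\mathbb{B}_{\mathsf{RO}}$, whereas soundness under $\vDash_{\mathsf{FD}}$ requires $a_0$ to be \emph{admissible}, i.e.\ $a_0\in\mathbb{B}_{\mathsf{FD}}$.

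The hard part is therefore to replace the discrete Ackermann lemmas by the topological Ackermann Lemmas~\ref{aRight:Ack} and~\ref{aLeft:Ack}. To apply them I would verify that, throughout a successful pivotal run, the inequalities in $S$ to which an Ackermann rule is applied have the required syntactic shape: the solved sides $\alpha_i$ are syntactically closed (resp.\ open), and each residual $\beta_j\leq\gamma_j$ has $\beta_j$ syntactically closed and positive (resp.\ negative) and $\gamma_j$ syntactically open and negative (resp.\ positive) in the eliminated variable---a structural invariant that the approximation and residuation rules are designed to preserve. Granting this, the topological lemmas deliver an admissible $a_0$: using $\mathsf{PsAt}(\mathbb{B}_{\mathsf{RO}})\subseteq\mathsf{K}(\mathbb{B}_{\mathsf{RO}})$ together with the preservation Lemmas~\ref{acor:congenial top for whites and blacks}, \ref{aUpward-Directed}, \ref{alemma:formula:close:open}, \ref{aMJ:Pres} and Corollary~\ref{aClose:to:Close}, the value $\alpha^{\mathbb{B}_{\mathsf{RO}}}(\vec a,\vec x)$ is closed (resp.\ open), and compactness of the canonical extension $\mathbb{B}_{\mathsf{RO}}=(\mathbb{B}_{\mathsf{FD}})^{\delta}$ then sandwiches a clopen---hence admissible---element $a_0$ as required, exactly as in \cite[Lemma B.4, B.5]{PaSoZh15}. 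With the Ackermann steps thereby shown sound under $\vDash_{\mathsf{FD}}$, every intermediate equivalence in the chain holds and the theorem follows; I expect the main obstacle to be precisely this bookkeeping of the syntactically closed/open invariant feeding into the compactness step, rather than anything in the outer duality translations.
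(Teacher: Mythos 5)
Your proposal is correct and follows essentially the same route as the paper, whose proof simply invokes the left arm of the U-shaped argument on page \pageref{atable:U:shape:descriptive}: the duality translations at top and bottom, unchanged soundness for all non-Ackermann rules, and the topological Ackermann Lemmas \ref{aRight:Ack} and \ref{aLeft:Ack} (resting on the syntactically closed/open machinery and the fact that $\mathbb{B}_{\mathsf{RO}}$ is the constructive canonical extension of $\mathbb{B}_{\mathsf{FD}}$) to supply an admissible minimal valuation. Your additional remarks on verifying the closed/open invariant along a pivotal run and on the compactness step are exactly the bookkeeping the paper delegates to the analogy with \cite[Lemma B.4, B.5]{PaSoZh15}.
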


\begin{proof}
By the left-arm of the U-shaped argument given on page \pageref{atable:U:shape:descriptive}.
\end{proof}

\begin{theorem}
Any inequality $\phi\leq\psi$ in the basic modal language $\mathcal{L}$ on which ALBA succeeds is preserved under taking constructive canonical extension, and hence filter-canonical.
\end{theorem}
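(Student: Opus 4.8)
The plan is to run the complete U-shaped argument depicted on page~\pageref{atable:U:shape:descriptive}, reading off its two arms and bottom link the biconditional $\mathbb{F}_{\mathsf{FD}}\vDash\phi\leq\psi$ iff $\mathbb{F}_{\mathsf{RO}}\vDash\phi\leq\psi$, and then invoking the definition of filter-canonicity together with the identification of $\mathbb{B}_{\mathsf{RO}}$ as the constructive canonical extension of $\mathbb{B}_{\mathsf{FD}}$. Since the two arms have already been set up, the theorem is essentially a matter of chaining equivalences rather than of new technical work.

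Concretely, I would fix a filter-descriptive possibility frame $\mathbb{F}_{\mathsf{FD}}=(W,\sqsubseteq,R,\mathsf{P})$ together with its underlying full possibility frame $\mathbb{F}_{\mathsf{RO}}=(W,\sqsubseteq,R,\mathsf{RO}(W,\tau_{\sqsubseteq}))$; by Remark~\ref{aremark:underlying:full} the latter is well-defined, and by \cite[Theorem 5.46]{Ho16} its dual $\mathbb{B}_{\mathsf{RO}}$ is the constructive canonical extension of $\mathbb{B}_{\mathsf{FD}}$. The left arm is the correspondence over filter-descriptive frames just established, whose soundness for the Ackermann steps is supplied by the topological Ackermann lemmas (Lemmas~\ref{aRight:Ack} and~\ref{aLeft:Ack}) and whose remaining steps are sound exactly as in Theorem~\ref{aCrctns:Theorem}; it yields $\mathbb{F}_{\mathsf{FD}}\vDash\phi\leq\psi$ iff $\mathbb{F}_{\mathsf{FD}}\vDash\mathsf{FO}(\phi\leq\psi)$. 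The right arm is Theorem~\ref{aCrctns:Theorem} itself, giving $\mathbb{F}_{\mathsf{RO}}\vDash\phi\leq\psi$ iff $\mathbb{F}_{\mathsf{RO}}\vDash\mathsf{FO}(\phi\leq\psi)$.

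The bottom link is the observation that $\mathsf{FO}(\phi\leq\psi)$ contains no unary predicate symbols, so its truth is insensitive to the admissible set; since $\mathbb{F}_{\mathsf{FD}}$ and $\mathbb{F}_{\mathsf{RO}}$ share the same $W$, $\sqsubseteq$ and $R$, this gives $\mathbb{F}_{\mathsf{FD}}\vDash\mathsf{FO}(\phi\leq\psi)$ iff $\mathbb{F}_{\mathsf{RO}}\vDash\mathsf{FO}(\phi\leq\psi)$. Chaining the three equivalences yields $\mathbb{F}_{\mathsf{FD}}\vDash\phi\leq\psi$ iff $\mathbb{F}_{\mathsf{RO}}\vDash\phi\leq\psi$; in particular validity on $\mathbb{F}_{\mathsf{FD}}$ forces validity on $\mathbb{F}_{\mathsf{RO}}$, which is exactly filter-canonicity, and, via the duality of possibility semantics, exactly preservation under taking constructive canonical extension.

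As both arms and the bottom link are already in place, the genuine obstacle has been front-loaded into the left arm, i.e.\ into the soundness of ALBA over filter-descriptive frames. There the difficulty is that propositional variables range only over $\mathbb{B}_{\mathsf{FD}}$ while expanded-language formulas may land in $\mathbb{B}_{\mathsf{RO}}\setminus\mathbb{B}_{\mathsf{FD}}$, so the minimal-valuation witness of the Ackermann rules cannot be produced at the discrete level; its admissibility is secured by the syntactic closed/open classification (Definition~\ref{adef:closed:open}), by the fact that such formulas evaluate to closed resp.\ open elements (Corollary~\ref{aClose:to:Close}), and by compactness of the canonical extension, with $\mathsf{PsAt}(\mathbb{B}_{\mathsf{RO}})\subseteq\mathsf{K}(\mathbb{B}_{\mathsf{RO}})$ ensuring the classification survives the variant interpretation of nominals.
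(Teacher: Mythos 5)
Your proposal is correct and follows exactly the paper's own argument: the paper proves this theorem by invoking the whole U-shaped argument on page \pageref{atable:U:shape:descriptive}, whose left arm rests on the topological Ackermann lemmas, whose right arm is Theorem \ref{aCrctns:Theorem}, and whose bottom link is the insensitivity of $\mathsf{FO}(\phi\leq\psi)$ to the admissible set. Your identification of the genuine technical content (admissibility of the minimal-valuation witness via the syntactically closed/open classification and compactness) matches the paper's Section \ref{asec:soundness:topological}.
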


\begin{proof}
By the whole U-shaped argument given on page \pageref{atable:U:shape:descriptive}.
\end{proof}

\section{Conclusions and future directions}\label{aSec:Discussion}

In the present section, we will discuss some aspects of correspondence and canonicity in possibility semantics, and give some future directions.

\subsection{The variation of interpretations: nominals and approximation rules in different ALBAs}

The SQEMA\footnote{SQEMA stands for \textbf{S}econd-order \textbf{Q}uantifier \textbf{E}limination for \textbf{M}odal formulae using \textbf{A}ckermann's lemma (see \cite{CoGoVa06}).}-ALBA line of algorithmic correspondence research starts from Boolean algebra based modal logics \cite{CoGoVa06}, and later on the underlying semantic environment generalizes to distributive lattices \cite{CoPa12}, general lattices \cite{CoPa:non-dist}, constructive extensions of lattices \cite{CP:constructive} and possibility semantics. Along the line of generalizations, properties specific to certain settings are separated from the more general properties. Here we are going to discuss the rules of nominals, i.e. the approximation rules in detail.

\paragraph{Boolean algebras with operators.}In the setting of Boolean algebras with operators, every $\mathcal{CAV}$-BAO has atoms, which are complete join-irreducibles, complete join-primes and join-generators. Therefore, the following kind of approximation rule is available (see \cite{CoGoVa06}):
\begin{prooftree}
\AxiomC{$\nomi\leq\Diamond\gamma$}
\RightLabel{(Left-$\Diamond$-Appr)}
\UnaryInfC{$\nomi\leq\Diamond\nomj$\ \ $\nomj\leq\gamma$}
\end{prooftree}
where the nominals are interpreted as atoms. To guarantee the soundness of the rule above, the following properties are used (in the remainder of this subsection, we will abuse notation and identify formulas with their interpretations on algebras):
\begin{itemize}
\item First of all, $\Diamond\gamma$ is represented as $\Diamond\bigvee\{\nomj\in\mathsf{At}(\mathbb{B})\mid\nomj\leq\gamma\}$, which uses the fact that the atoms in the $\mathcal{CAV}$-BAOs are join-generators;
\item secondly, since $\Diamond$ is completely join-preserving, it can be equivalently represented as $\bigvee\{\Diamond\nomj\mid\nomj\in\mathsf{At}(\mathbb{B})\mbox{ and }\nomj\leq\gamma\}$;
\item finally, $\nomi\leq\bigvee\{\Diamond\nomj\mid\nomj\in\mathsf{At}(\mathbb{B})\mbox{ and }\nomj\leq\gamma\}$ iff there exists a $\nomj\in\mathsf{At}(\mathbb{B})$ such that $\nomj\leq\gamma$ and $\nomi\leq\Diamond\nomj$, which uses the fact that the atoms are completely join-prime.
\end{itemize}
Therefore, the semantic properties used in the soundness proof of (Left-$\Diamond$-Appr) are the following:

\begin{itemize}
\item Atoms are join-generators;
\item $\Diamond$ is completely join-preserving;
\item Atoms are completely join-prime.
\end{itemize}

Therefore, the atomicity of $\mathcal{CAV}$-BAOs are not essentially used in the soundness proof of the rule above.

\paragraph{Distributive lattice expansions.}Going from Boolean algebras with operators to distributive lattice expansions, the atomicity is not available anymore. However, in perfect distributive lattices, there are complete join-irreducibles which are also complete join-primes and join-generators, although not necessarily atomic. Therefore, as we analyzed in the Boolean setting, if we interpret the nominals as complete join-irreducibles (i.e.\ complete join-primes), these properties are enough to guarantee the rule (Left-$\Diamond$-Appr) to be sound on perfect distributive lattice expansions.

\paragraph{General lattice expansions.}In the further general setting of perfect general lattice expansions without assuming distributivity, complete join-irreducibles are not the same as complete join-primes anymore. The remaining property in general lattice expansions is that the complete join-irreducibles (here we denote the set of complete join-irreducibles of the lattice $\mathbb{L}$ as $\mathsf{J}(\mathbb{L})$) are still join-generators. Consider the following rule:
\begin{center}
\AxiomC{$(S, \;\; \phi(\gamma / !x)\leq \psi)$}
\RightLabel{$(L^+A)$}
\UnaryInfC{$(S\! \cup\! \{ \nomi \leq \gamma\},\;\; \phi(\nomi / !x)\leq \psi)$}
\DisplayProof
\end{center}
with $+x \prec +\phi(!x)$, the branch of $+\phi(!x)$ starting at $+x$ being SLR, $\gamma$ belonging to the original modal language and $\nomi$ being the first nominal variable not occurring in $S$ or $\phi(\gamma / !x)\leq \psi$.

To guarantee the soundness of the rule above, the following properties are used:
\begin{itemize}
\item First of all, $\phi(\gamma)$ is represented as $\phi(\bigvee\{\nomj\in\mathsf{J}(\mathbb{L})\mid\nomj\leq\gamma\})$, which uses the fact that the complete join-irreducibles in prefect general lattices are join-generators;
\item secondly, since $\phi(!x)$ is completely join-preserving, it can be equivalently represented as $\bigvee\{\phi(\nomj)\mid\nomj\in\mathsf{J}(\mathbb{L})\mbox{ and }\nomj\leq\gamma\}$;
\item finally, $\bigvee\{\phi(\nomj)\mid\nomj\in\mathsf{J}(\mathbb{L})\mbox{ and }\nomj\leq\gamma\}\leq\psi$ iff for all $\nomj\in\mathsf{J}(\mathbb{L})\mbox{ s.t.\ }\nomj\leq\gamma$, it holds that $\phi(\nomj)\leq\psi$, which does not use special properties of perfect general lattice expansions.
\end{itemize}

Therefore, in an approximation rule like $(L^+A)$, the semantic property essentially used is that the complete join-irreducibles are join-generators. As a result, any complete lattice-like structures which have join-generators can have an approximation rule in the style of $(L^+A)$ to be sound, once the nominals are interpreted as the join-generators.

For example, in the constructive canonical extensions of general lattice expansions, they are not necessarily perfect, so there are not ``enough'' complete join-irreducibles. However, in a canonical extension, every element can be represented as the join of closed elements, therefore if we interpret the nominals as closed elements, the approximation rule above is sound.

In possibility semantics, the regular open dual BAOs of full possibility frames are $\mathcal{CV}$-BAOs, thus do not have ``enough'' atoms. However, in this setting, the regular open closures of singletons can serve as the join-generators, which gives the semantic environment for the rule $(L^+A)$ to be sound.

To sum up, the following table shows the semantic properties available in each setting, which justifies the use of different approximation rules in each setting:

\begin{center}
\begin{tabular}{|c|c|c|}

\hline
\textbf{Propositional base}&\textbf{Nominals/join-generators}&\textbf{Complete join-primes}\\
\hline
Boolean algebras&atoms&atoms\\
\hline
distributive lattices&complete join-irreducibles&complete join-irreducibles\\
\hline
general lattices&complete join-irreducibles&not enough\\
\hline
constructive canonical extensions&closed elements&not enough\\
\hline
possibility semantics&regular open closures of singletons&not enough\\
\hline
\end{tabular}
\end{center}

\subsection{The essence of minimal valuation}

As we can see from the previous subsection, nominals are not necessarily interpreted as atoms (singletons) or complete join-irreducibles, anything that can be taken as the join-generators can serve as the interpretation. Therefore, the execution of the algorithm ALBA provides quasi-inequalities equivalent to the input formula (inequality), which are in the language with nominals and the connectives in the expanded modal language. Therefore, a successful ALBA reduction of a modal formula to the pure language means that the input modal formula can be expressed by the join-generators. In the setting of constructive canonical extensions of general lattices, there is not (yet) an obvious way of expressing closed elements in the first-order correspondence language. However, in other settings, we can express the atoms, complete join-irreducibles or regular open closures of singletons in the first-order correspondence language. Since we also have the first-order translation of the connectives both in the basic modal language and in the expanded modal language, we can thus translate the pure quasi-inequalities into first-order formulas.

\subsection{Translation method and its limitations}

Since possibility frames have two binary relations, it is natural to view them as bimodal Kripke frames with additional restrictions on the valuations of propositional variables (see \cite{vBBeHo16} for a detailed discussion of this bimodal perspective). Therefore, a natural question is whether we can use this view to reduce correspondence problems in possibility semantics to correspondence problems in the bimodal language, like using the G\"{o}del translation from intuitionistic logic to S4 modal logic (see \cite{CPZ:Trans} for a detailed discussion of the power and limits of translation method in correspondence theory). As we can see in e.g.\ \cite{vBBeHo16}, when we try to reduce correspondence problems in possibility semantics to correspondence problems in the bimodal language, a problem arises due to the complication of the formula structure of the bimodal formulas after the translation. However, since there are additional properties satisfied in the bimodal language (e.g.\ the interaction axioms between the accessibility relation and the refinement relation), there are additional order-theoretic properties satisfied, which can make more connective combinations having the nice order-theoretic properties. One example is $\Box_\sqsubseteq\Diamond_\sqsubseteq$, which is meet-preserving. If we make the analysis from the prespective of the order-theoretic properties of the individual connectives only, then $\Box_\sqsubseteq\Diamond_\sqsubseteq$ is of the shape of the antecedent of the McKinsey formula, which has a very bad combination pattern.

Therefore, a natural question is: can we use the order-theoretic properties of the combinations of connectives in correspondence theory for superintuitionistic logics, in addition to the properties of the individual connectives, to obtain results like in \cite{Ro86}?

\subsection{Constructive canonical extensions}
As we can see in \cite[Theorem 5.46]{Ho16} and the canonicity proof in the present paper, the persistence notion corresponding to the validity preservation from filter-descriptive possibility frames to their underlying full possibility frames is exactly the same as the validity preservation under taking constructive canonical extension. Therefore, possibility semantics provides a relational semantic environment to give canonicity proofs without appealing to the axiom of choice and its equivalent forms. In addition, possibility frames could be recognized as the dual relational semantic environment to study constructive canonical extensions \cite{GeHa01,GhMe97}. In this sense, we can say that possibility semantics is the constructive counterpart of Kripke semantics.

A future question related to the constructive feature is about the frame-theoretic counterpart of constructive canonical extensions in lattice-based settings, i.e.\ non-classical versions of possibility semantics as the duals of distributive lattices and their canonical extensions, which is expected to ``relationalize'' the constructive canonical extensions in a ``pointed'' way, where ``points'' refer to filters rather than prime filters or ultrafilters. For the intuitionistic generalization of possibility semantics, it is already studied in \cite{BeHo16} as Dragalin semantics.

\bibliographystyle{abbrv}
\bibliography{Correspondence_ALBA}

\end{document}